\newcommand{\bq}{\begin{equation}}
\newcommand{\eq}{\end{equation}}
\theoremstyle{plain}
\newtheorem{theo}{Theorem}[section]
\newtheorem{prop}[theo]{Proposition}
\newtheorem{lemm}[theo]{Lemma}
\newtheorem{defi}[theo]{Definition}
\theoremstyle{definition}
\newtheorem{rema}[theo]{Remark}
\newtheorem{exam}[theo]{Example}
\def\tt{\theta}
\def\eps{\varepsilon}
\def\les{\lesssim}
\def\mez{\frac{1}{2}}
\def\Rr{\mathbb{R}}
\def\T{\mathbb{T}}
\def\Nn{\mathbb{N}}
\def\Zz{\mathbb{Z}}
\def\cO{\mathcal{O}}
\def\cP{\mathcal{P}}
\def\ld{\lambda}
\def\p{\partial}
\def\na{\nabla}
\def\g{\gamma}
\def\a{\alpha}
\def\dv{\text{div}\,}
\numberwithin{equation}{section}
\newcommand{\dgr}[2]{#1 \cdot  \nabla #2}
\newcommand{\pay}{\partial_1}
\newcommand{\pad}{\partial_2}
\newcommand{\pat}{\partial_t}
\newcommand{\ddt}{\frac{d}{dt}\,}
\newcommand{\comm}[2]{\big[ #1, \, #2 \big]}
\def\proj{\mathbb{P}}
\def\d{\delta}
\newcommand{\pnorm}[2]
    {
        \Vert #1 \Vert _{L^{#2}}
    }
\newcommand{\winorm}[2]
    {
        \Vert #1
        \Vert_{W^{#2,\infty}}
    }
\newcommand{\tnorm}[1]
    {
        \Vert #1 \Vert _{L^2}
    }
\newcommand{\hnorm}[2]
    {
        \Vert #1 \Vert _{H^{#2}}
    }
\newcommand{\hdnorm}[2]
    {
        \Vert #1 \Vert_{\dot{H}^{#2}}
    }
\newcommand{\wnorg}[1]
    {
        \Vert \g \Vert_{W^{#1, \infty}}
    }
\title{On Moffatt's magnetic relaxation for 2D and 2.5D flows}
\author{Sepehr Mohammadkhani}
\address{Department of Mathematics, University of Maryland, College Park, MD 20742}
\email{seperman@umd.edu }
\author{Huy Q. Nguyen}
\address{Department of Mathematics, University of Maryland, College Park, MD 20742}
\email{hnguye90@umd.edu}
\date{}
\begin{document}

\maketitle

\section*{Abstract}
We study the Moffatt's magnetic relaxation equation with Darcy-type regularization for the constitutive law. This is a topology-preserving dissipative equation, whose solutions
are conjectured to converge in the infinite time limit towards equilibria of the incompressible  Euler equations.  Our goal is to prove this conjectured property for various equilibria in various domains.  The first result concerns a class of non-constant shear flows in a 2D periodic channel. In the second result,  by adopting a geometric approach, we address a class of 2.5D equilibria in $\Omega\times \Rr$, where $\Omega\subset \Rr^2$ can be a periodic channel or any bounded domain.

\tableofcontents

\section{Introduction}\label{sec:intro}
Magnetic relaxation is a concept in topological hydrodynamics, a subject pioneered by Arnold \cite{Arn66} (see also \cite{ArnoldKhesin}). We recall the ideal incompressible magnetohydrodynamics (MHD) equations
\bq\label{MHD}
\begin{aligned}
 \partial_{t}{B} + \dgr{u}{B} & = \dgr{B}{u},\\
 \p_t u+u\cdot \na u+\na p&=B\cdot \na B,\\
 \dv B&=\dv u=0.
\end{aligned}
\eq
 Magnetohydrostatic (MHS)  states are steady states $B(x, t)=\overline{B}(x)$ and  $u(x, t)=0$ of \eqref{MHD}, that is, 
 \bq\label{sMHD}
 \na \overline{p}=\overline{B}\cdot \na \overline{B},\quad \dv \overline{B}=0.
 \eq
 Upon changing the sign of the pressure $\overline{p}$, \eqref{sMHD} is the same as the steady incompressible Euler equations
  \bq\label{sEuler}
\overline{u}\cdot \na \overline{u}+\na \overline{p}=0,\quad \dv \overline{u}=0.
 \eq
Magnetic relaxation is a mechanism that aims to obtain MHS states and steady Euler states as long time
limits of a {\it topology-preserving} evolution equation, hopefully easier to analyze
than the original ideal MHD equations. Moffatt \cite{Moffatt85, Moffatt21} proposed a  class of such evolution equations: 
\begin{subequations}\label{eqM}
\begin{align}
 \partial_{t}{B} + \dgr{u}{B} & = \dgr{B}{u},\quad x\in D\subset \Rr^d, \label{eqM:1}\\
(-\Delta)^\sigma u&=B\cdot \na B +\na p,\label{eq:M2}\\
 \dv B&=\dv u=0,
\end{align}
\end{subequations}
where $\sigma\ge 0$ is the regularization parameter of the constitutive law $B\mapsto u$.  In particular, the case $\sigma=0$ corresponds
to a Darcy-type regularization, and the case $\sigma=1$ corresponds to a Stokes-type regularization. For simplicity, let us discuss \eqref{eqM} on the flat torus $D=\T^d:=(\Rr / 2\pi \Zz)^d$. Let $X(\alpha, t)$ denote the particle-trajectory map associated to the velocity $u$: 
\[
\frac{d}{dt}X(\alpha, t)=u(X(\alpha, t), t),\quad X(\alpha, 0)=\alpha.
\]
For each $t$, let $\xi^t(x, s)$ denote the ``magnetic line" of $B(t)$:
\[
\frac{d}{ds}\xi^t(x, s)=B(\xi^t(x, s), t),\quad \xi^t(x, 0)=x.
\]
It is a consequence of the magnetic dynamo equation \eqref{eqM:1} that  $B(X(\alpha, t), t)=\na_\alpha X(\alpha, t)B_0(\alpha)$, $B_0:=B\vert_{t=0}$. Hence, 
\[
\frac{d}{ds}X(\xi^0(x, s), t)=\na_\alpha X(\xi^0(x, s), t)B_0(\xi^0(x, s))=B(X(\xi^0(x, s), t), t),
\]
so that $\xi^t(x, s)=X(\xi^0(x, s), t)$. In other words, the magnetic lines of $B$ at time $t$ are the images by $X(\cdot, t)$ of those of $B$ at time $0$. Thus, these lines
keep their topology unchanged during the evolution, in particular their knot structure. This explains the topology-preserving property of \eqref{eqM}. Moreover, \eqref{eqM} admits the following  dissipative property:
\bq
\mez\frac{d}{dt}\int_{\T^d}B^2=-\int_{\T^d} u\cdot (-\Delta)^\sigma u=-\int |(-\Delta)^{\frac{\sigma}{2}}u|^2.
\eq
Therefore, {\it if} $B(t)$ exists globally and converges to some $\overline{B}$ in strong topologies as $t\to \infty$, then $u(t)\to 0$, and hence $\overline{B}$ is a steady Euler state. However, establishing the global existence and relaxation in strong topologies of $B$ is a challenging matter. It strongly depends  on the domain and structure of initial data. Our  goal in the present paper is to establish the relaxation for various steady states in various domains. 

Next, we review existing results concerning the system \eqref{eqM}. \cite{Brenier14} proved that  for any initial data $B_0\in L^2(\T^2)$, \eqref{eqM}  admits a global dissipative weak solution which satisfies the weak-strong uniqueness.  In \cite{BFV22}, it was proven that for  $B_0\in H^s(\T^d)$ with $s>1+\frac{d}{2}$, \eqref{eqM} is locally well-posed for $\sigma\ge 0$ and globally well-posed for $\sigma>1+\frac{d}{2}$.  The case $\sigma=1+\frac{d}{2}$ was treated in \cite{BKS} and \cite{Tan} with  initial data of low regularity. These global well-posedness results in \cite{BFV22, Tan} are accompanied by the decay of velocity, i.e. $\lim_{t\to \infty} u(t)=0$, yet leave open the important question whether $B(t)$ relaxes to a steady state.  We refer to Remarks 4.2 and 4.3 in \cite{BFV22} for further discussion on this issue. Thus,  although sufficiently large values of the regularization parameter $\sigma$ facilitate the global existence  of solutions, it is not clear what role they  play in driving the solutions to steady states. 

To the best of our knowledge, the first and only existing positive result for  magnetic relaxation is Theorem 5.1 in \cite{BFV22}, concerning \eqref{eqM} with $\sigma=0$ on the 2D torus.  It states that for a class of initial data sufficiently close to the {\it constant shear} flow $B=e_1$ in $H^m(\T^2)$ ($m\ge 13$), there exists a unique global solution which relaxes to a shear flow $\overline{B}$  close to $e_1$. This leads us to the first main result of the present paper in which we extend \cite[Theorem 5.1]{BFV22} on two aspects: we prove the relaxation for initial data close to  a class of {\it non-constant shear flows} in the periodic channel $\T\times (-1, 1)$, taking into account the presence of {\it physical boundaries}. The latter requires suitable boundary conditions for $B$ and $u$. Here, for \(D\subset \Rr^d\), we consider the magnetic relaxation equation (MRE) with Darcy-type constitutive law ($\sigma=0$): 
\begin{subequations}
    \label{eq:gen}
    \begin{align}
        \partial_{t}{B} + \dgr{u}{B} & = \dgr{B}{u}\quad\text{in }~ D\times (0, \infty),  \label{eq:gena} \\ 
        u & = \dgr{B}{B} + \nabla{p} \quad\text{in }~ D,\label{eq:genb} \\ 
        \dv u = \dv B &= 0 \quad\text{in }~ D,\label{eq:genc} \\ 
        u \cdot n &= 0 \quad \text{on }~ \partial D, \label{eq:gend} \\ 
        B \cdot n &= 0 \quad \text{on }~ \partial D. \label{eq:gene}
    \end{align}
\end{subequations}
The next  lemma shows that the boundary conditions in \eqref{eq:gen} are suitable for energetic purposes.
\begin{lemm}[\(L^2\) Estimate for General Domain]
Sufficiently smooth solutions of \eqref{eq:gen} satisfy 
\[
\frac12 \ddt \tnorm{B}^2 = - \tnorm{u}^2.
\]
\end{lemm}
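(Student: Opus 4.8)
The plan is to run the standard energy estimate: test \eqref{eq:gena} against $B$ and integrate over $D$. First I would observe that $\int_D B\cdot\p_t B\,dx = \frac12\ddt\tnorm{B}^2$, so it remains to handle the transport and stretching terms. For the transport term, write $\int_D (\dgr{u}{B})\cdot B\,dx = \int_D u_j\,(\p_j B_i)\,B_i\,dx = \int_D u\cdot\na\big(\tfrac12|B|^2\big)\,dx$; integrating by parts and using $\dv u = 0$ from \eqref{eq:genc} together with $u\cdot n = 0$ on $\p D$ from \eqref{eq:gend}, this term vanishes.

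For the stretching term, I would integrate by parts to move the derivative off $u$:
\[
\int_D (\dgr{B}{u})\cdot B\,dx = \int_D B_j\,(\p_j u_i)\,B_i\,dx = -\int_D u_i\,\p_j(B_j B_i)\,dx + \int_{\p D}(B\cdot n)(u\cdot B)\,dS.
\]
The boundary integral is zero by \eqref{eq:gene}, and since $\dv B = 0$ we have $\p_j(B_j B_i) = B_j\p_j B_i = (\dgr{B}{B})_i$, so the bulk term equals $-\int_D u\cdot(\dgr{B}{B})\,dx$. Now I would substitute the constitutive law \eqref{eq:genb} in the form $\dgr{B}{B} = u - \na p$, giving
\[
-\int_D u\cdot(\dgr{B}{B})\,dx = -\tnorm{u}^2 + \int_D u\cdot\na p\,dx,
\]
and a last integration by parts shows $\int_D u\cdot\na p\,dx = -\int_D(\dv u)\,p\,dx + \int_{\p D}(u\cdot n)\,p\,dS = 0$ by \eqref{eq:genc} and \eqref{eq:gend}. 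Collecting the three contributions yields $\frac12\ddt\tnorm{B}^2 = -\tnorm{u}^2$.

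There is no real obstacle in this computation; the only thing that needs care is the bookkeeping of boundary integrals, and the content of the lemma is precisely that the boundary conditions $u\cdot n = 0$ and $B\cdot n = 0$ are exactly what makes every boundary term disappear. All the integrations by parts are legitimate for sufficiently smooth solutions (e.g. $B, u, p \in C^1(\overline{D})$ with $D$ having, say, a Lipschitz boundary so that the divergence theorem applies), so no approximation or density argument is required.
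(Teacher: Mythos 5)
Your proof is correct and takes essentially the same route as the paper: test the induction equation against $B$, integrate by parts using $\dv u = \dv B = 0$ and the no-penetration conditions to reduce to $-\int_D u\cdot(\dgr{B}{B})$, then use the constitutive law and one more integration by parts to kill the pressure term. The only cosmetic difference is that you substitute $\dgr{B}{B} = u - \na p$ directly rather than pairing \eqref{eq:genb} against $u$ as the paper does; these are the same computation.
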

\begin{proof}
    We first multiply equation \eqref{eq:gena} by \(B\) and integrate by parts using the conditions \\$\dv u$$=\dv B=0$ and $u\cdot n=B\cdot n=0$. This gives 
    \begin{align*}
        \mez \ddt \tnorm{B}^2 &= - \int_D (\dgr{u}{B})\cdot B + \int_D (\dgr{B}{u})\cdot B \\
        &= \mez\int_D   |B|^2 \dv u-\mez \int_{\p D}|B|^2u\cdot n - \int_D (\dv B)(u\cdot B)-\int_D (B\cdot \na B)\cdot u\\
        &\qquad +\int_{\p D}(B\cdot n)(u\cdot B)\\
        &= - \int_D (\dgr{B}{B}) \cdot u.
    \end{align*}
    Then  multiplying equation \eqref{eq:genb} by $u$ and integrating by parts using again that $\dv u=0$ and $u\cdot n=0$, we deduce 
   \begin{align*}
       \mez \ddt \tnorm{B}^2 = - \int_D \vert u \vert ^2 +\int_D  \na p \cdot u = - \tnorm{u}^2.
   \end{align*}
\end{proof}
Below is an informal version of our first main result, which is rigorously stated in Theorem \ref{theo:main}. 
\begin{theo}[Informal version]\label{thm1:intro}
Let \(k \ge 3\), \(m \geq k+5\), and \(\g \in W^{m+1,\infty}((-1, 1))\) such that $\inf_{(-1, 1)}|\gamma|>0$.   There exist positive constants $C_k$ and $C_{m, k}$ such that if $\gamma$ satisfies 
\begin{align}
\label{assumption:gammapmain1:intro}
        c_0: =\inf \vert \g(x_2) \vert > 0, \;   \quad
        \winorm{\g'}{k-1} \le \frac{c_0}{C_k}\quad\text{and}\\
\label{assumption:gammapmain2}
    \winorm{\g'}{m} \le  \frac{c_0}{C_{m, k}c_P},
\end{align}
 then the following holds. Consider initial data $B_0(x)=\gamma(x_2)e_1+b_0(x)$, where $b_0$ is a small  perturbation in $ H^m(\T\times (-1, 1))$ whose second component has mean zero in $x_1$. Then \eqref{eq:gen} has a unique global solution $B(x, t)=\gamma(x_2)e_1+b(x, t)$  which converges to a shear $\overline{B}(x_2)$ close to $\gamma(x_2)e_1$. 
 \end{theo}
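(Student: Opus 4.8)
The plan is to treat \eqref{eq:gen} perturbatively around the steady shear $\g(x_2)e_1$, exhibiting a hidden heat-type damping in $x_1$ which drives the $x_1$-fluctuation of $B$ to zero while the $x_1$-averaged shear part converges. Write $B=\g(x_2)e_1+b$. Eliminating the pressure through the Leray projector $\proj$ of the channel $\T\times(-1,1)$ (defined via the Neumann pressure problem compatible with $u\cdot n=0$), equations \eqref{eq:genb}–\eqref{eq:genc} read $u=\proj\big[\g\pay b+b_2\g' e_1+\dgr{b}{b}\big]$, an at-most-quadratic nonlocal functional of $b$ vanishing at $b=0$; in particular $\g e_1$, and indeed every shear $f(x_2)e_1$, is a steady state because $\proj[\g\pay(\g e_1)]=0$. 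From $\dv B=0$ and $B\cdot n=0$ on $x_2=\pm1$ one finds that the $x_1$-mean of $B_2=b_2$ is identically $0$, so the $x_1$-average of $b$ is itself a shear; we split $b=\ol b(x_2,t)e_1+\wt b(x,t)$ into its $x_1$-mean $\ol b e_1$ and its $x_1$-fluctuation $\wt b$. Taking $x_1$-averages in \eqref{eq:gena} and using once more that shear–shear interactions create no velocity, $\pat\ol b$ turns out to be a purely \emph{quadratic} expression in $\wt b$; thus $\ol b$ evolves slowly and all the dissipation lives on $\wt b$.

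\emph{The linear damping mechanism.} Linearizing \eqref{eq:gena} about $\g e_1$, the fluctuation satisfies — up to a remainder every term of which carries a factor $\g'$ or $\g''$, and up to terms quadratic in $\wt b$ — the variable-coefficient heat equation $\pat\wt b=\g(x_2)^2\pay^2\wt b$ in the $x_1$ variable. (Here one uses that $\proj$ commutes with $\pay$, which is tangential to $\p D$, and that for constant $\g$ it fixes $\pay\wt b$.) Since $\g(x_2)^2\ge c_0^2>0$ and $\wt b$ has vanishing $x_1$-mean, testing against $\wt b$ yields the coercive dissipation $\ge c_0^2\tnorm{\pay\wt b}^2\ge c_0^2\tnorm{\wt b}^2$, whereas the $\g'$-remainder is controlled by the smallness \eqref{assumption:gammapmain1:intro}–\eqref{assumption:gammapmain2}.

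\emph{Bootstrap, global existence, relaxation.} On $[0,T]$ assume $\hnorm{\wt b(t)}{m}\le 2\eps e^{-\d t}$ and $\hnorm{\ol b(t)}{m}\le 2(\hnorm{b_0}{m}+\eps)$ for a rate $\d\sim c_0^2$. Commuting $\le m$ derivatives through the equation for $b$: the top-order transport term $\int(\dgr{u}{\p^\a b})\cdot\p^\a b$ vanishes by $\dv u=0$ and $u\cdot n=0$; the stretching term and the commutators of $\p^\a$ with multiplication by $\g(x_2)$ and with $\proj$ are absorbed by product and commutator estimates (which cost a fixed number of derivatives — hence the requirement $m\ge k+5$); the variable-coefficient diffusive term is absorbed into the coercive dissipation using $\g^2\ge c_0^2$; and the $\g'$-pieces are small. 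This produces $\mez\ddt\hnorm{\wt b}{m}^2\le -c_0^2\hnorm{\wt b}{m}^2+C\big(\hnorm{b}{m}+\winorm{\g'}{m}\big)\hnorm{b}{m}^2$, and, paired with the analogous low-norm ($H^k$) estimate that fixes the constants and the decay rate (where \eqref{assumption:gammapmain1:intro} and $k\ge3$ are used), Gr\"onwall closes the fluctuation bootstrap with an improved constant. For the mean, $\ddt\hnorm{\ol b}{m}^2\les\hnorm{\wt b}{m}^2\les\eps^2e^{-2\d t}$ is time-integrable, so $\ol b(t)$ is Cauchy in $H^m$ and its bootstrap closes for $\eps$ small; hence $\hnorm{b(t)}{m}=O(\eps)$ for all time. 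Local well-posedness in $H^m$ (a standard energy scheme respecting the boundary conditions \eqref{eq:gend}–\eqref{eq:gene}) then upgrades this to a global solution, and an $L^2$-type difference estimate gives uniqueness. Finally $\ol b(t)\to\ol b_\infty$ in $H^m$ with $\hnorm{\ol b(t)-\ol b_\infty}{m}\les\eps^2e^{-2\d t}$ and $\hnorm{\wt b(t)}{m}\les\eps e^{-\d t}$, so $B(t)\to\ol B(x_2):=(\g+\ol b_\infty)(x_2)e_1$ in $H^m$, a shear with $\hnorm{\ol B-\g e_1}{m}\les\hnorm{b_0}{m}+\eps$, and correspondingly $u(t)\to0$, consistent with $\mez\ddt\tnorm{B}^2=-\tnorm{u}^2$.

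\textbf{Main obstacle.} The crux is the third step: extracting the \emph{damped} differential inequality for $\hnorm{\wt b}{m}^2$, i.e.\ checking that once $m$ derivatives are placed, every dangerous term — the top-order stretching term, the commutators of $\p^\a$ with $\g(x_2)\cdot$ and with the nonlocal projector $\proj$, and the variable-coefficient diffusive pieces — is either in transport/divergence form (and integrates away), strictly lower order, absorbable into $c_0^2\hnorm{\pay\wt b}{m}^2$, or carries the small factor $\winorm{\g'}{m}$. The non-locality of $\proj$ on the channel (it does not commute with multiplication by $\g(x_2)$) and the necessity of propagating a low- and a high-norm estimate with compatible smallness thresholds are exactly what force the constants $C_k$, $C_{m,k}$, $c_P$ and the conditions $k\ge3$, $m\ge k+5$.
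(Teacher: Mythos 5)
Your high-level plan is the right one — the decomposition of $b$ into its $x_1$-mean and its $x_1$-fluctuation, the identification of the hidden heat operator $\gamma^2\pay^2$, the use of the Leray projector for the channel, and a bootstrap — and it matches the paper's strategy at this level. However, the specific bootstrap architecture you propose cannot close, and this is precisely the non-trivial part of the proof.

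You posit that the fluctuation decays exponentially in the top norm, $\hnorm{\wt b(t)}{m}\le 2\eps e^{-\d t}$, and that the mean is uniformly bounded in $H^m$. Neither is achievable with energy estimates, because the nonlinearity loses derivatives. Once the velocity is substituted back into the $\wt b$-equation, the nonlinearity $N(\wt b, w)$ contains terms such as $\gamma\pay\proj(\wt b\cdot\na\wt b)$, whose $H^m$ norm is controlled (at best, after using divergence-free structure and commutator identities) by expressions involving $\hnorm{\wt b}{m+2}$, and the mean equation $\pat\ol b=\pad\proj_0(\wt b_2 w_1-\wt b_1 w_2)$ similarly costs a derivative at the top level. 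Consequently the claimed differential inequality $\mez\ddt\hnorm{\wt b}{m}^2\le -c_0^2\hnorm{\wt b}{m}^2+C(\hnorm{b}{m}+\winorm{\g'}{m})\hnorm{b}{m}^2$ is not obtainable: the damping $-c_0^2\hnorm{\wt b}{m}^2$ does not survive the derivative loss in the nonlinear terms at the level $H^m$. The damping in $H^m$ is genuinely linear, not nonlinear. The paper therefore proves exponential decay \emph{only in the low norm} $H^k$, a uniform bound on the mean \emph{only in} $H^{k+2}$, and — crucially — allows the top norm $\hnorm{b(t)}{m}$ to \emph{grow slowly} (like $e^{\frac18(c_0/c_P)^2 t}$). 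The nonlinear terms at intermediate regularity $k<k+j<m$ are then controlled by interpolating $\hnorm{f}{k+j}\lesssim\hnorm{f}{k}^{1-j/(m-k)}\hnorm{f}{m}^{j/(m-k)}$, and the hypothesis $m\ge k+5$ is exactly what makes the resulting decay rate still integrable despite the slow growth of the high norm. The slowly-growing high norm is itself obtained not from the heat damping, but from a refined local well-posedness bound of the form $\hnorm{b(t)}{m}^2\le\hnorm{b_0}{m}^2\exp(c_m\int_0^t\hnorm{Db}{k-1}^2+\pnorm{Du}{\infty}+\winorm{\g'}{m}^2)$, in which the integrand is in turn controlled via the $H^k$ decay and the smallness condition on $\winorm{\g'}{m}$. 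Without this three-tier structure (exponential decay in $H^k$ via a semigroup estimate for the $a$-dependent linear operator, uniform $H^{k+2}$ bound on the mean, slow $H^m$ growth of the full perturbation) and the interpolation that connects them, the bootstrap does not close. That is the genuine gap in your proposal.
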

We note that the conditions in \eqref{assumption:gammapmain1:intro} allow for shear flows $\gamma(x_2)e_1$ which are {\it  not} perturbations of constant shear flows. Indeed,  since $C_k$ and $C_{m, k}$ depend only on $(k, m)$,  $ \gamma'$ can  be large when $c_0=\inf |\gamma|$  is large. For example, for any $c_0>0$, $\g(x_2)=\frac{c_0}{C}(x_2+1)+c_0$ satisfies \eqref{assumption:gammapmain1:intro}, where $C=\max\{C_k, C_{m, k}c_P\}$ and the slope $\frac{c_0}{C}$ can be arbitrarily large. \\

The proofs of \cite[Theorem 5.1]{BFV22} and Theorem \ref{thm1:intro}  exploit heavily the 2D structure of MRE near shear flows. We now discuss magnetic relaxation in 3D. The problem admits the subclass of 2.5D flows 
\[
B(x_1, x_2, x_3, t)=\big(B_H(x_H, t), g(x_H, t)\big),\quad u=\big(u_H(x_H, t),  B_H \cdot \na_H g\big),
\]
 where $x_H=(x_1, x_2)$ denotes the horizontal variables, $(B_H, u_H)$ satisfies the 2D MRE in $\Omega\subset \Rr^2$, and $g(x_H, t)$ satisfies
\bq\label{eq:g:intro0}
 \p_t g(x_H, t)+u_H\cdot \na_H g(x_H, t)=(B_H\cdot \na_H)^2g(x_H, t),\quad x_H\in \Omega.
\eq
In particular, if $B_H$ is a 2D steady state, i.e. $B_H\cdot \na B_H+\na p=u_H=0$, then $g$ satisfies the linear diffusion-type equation 
\bq\label{eq:g:intro}
 \p_t g(x_H, t)=(B_H(x_H)\cdot \na_H)^2g(x_H, t),\quad x_H\in \Omega.
\eq
Therefore, relaxation means that $g(t)$ has a limit and $B_H\cdot \na _H g(t)\to 0$ as $\to \infty$.  This property should depend on the 2D steady state $B_H$ and the domain $\Omega$ in which it is posed. We first consider shear flows $B_H=V(x_2)e_1$ in the periodic channel $\T\times (-1, 1)$ (and also on $\T^2$). Theorem 6.1 in  \cite{BFV22} provides an example of $V(x_2)$ for which  relaxation fails in $H^1$. We will show  that relaxation always hold in $L^p$, $1\le p<\infty$. Moreover,  if $V(x_2)$ vanishes nowhere, then relaxation occurs exponentially fast  in all $H^k$. After that, we consider 2D steady states $B_H$ in  bounded domains  $\Omega\subset \Rr^2$, for which we will develop a {\it geometric approach} to establish the relaxation of \eqref{eq:sgeq}.  More precisely, we will discuss a class of vector fields $B_H$ which generate closed orbits; see Definition \ref{def:PB}. For such $B_H$, we will prove that relaxation always holds in $L^p$, $1\le p<\infty$, generalizing the aforementioned result for shear flows. This  relaxation holds without a rate. On the other hand, if the periods of the orbits of $B_H$ are bounded, we will prove that for a suitable class of initial data $g_0$, exponential relaxation holds  in some fractional Sobolev norm  $H^s$, $0<s<1$. This geometric approach is based on the  observation that when $B_H$ generates periodic orbits, the solution $g$ along the orbits satisfies the heat equation with periodic boundary conditions.  Finally, we will exemplify this result by concrete  examples of vector fields $B_H$. 
\section{Shear flows in  2D channel}
In this section we consider the 2D channel  \(D = \mathbb{T} \times (-1, 1)\). Any shear flow \(B=\g(x_2)e_1\) is a stationary solution of \eqref{eq:gen} with $u=0$.
In order to investigate the stability of these shear flows, we consider a perturbation \(b :=B-  \g e_1\) which satisfies
\begin{subequations}
    \label{eq:ti}
    \begin{align}
        \pat b + \dgr{u}{b} + u_2\g'e_1 &= \dgr{b}{u} + \g\pay u,\label{eq:tia} \\ 
        u &= \dgr{b}{b} + \g\pay b + b_2\g'e_1 + \nabla p, \label{eq:tib} \\ 
        \dv b = \dv u &= 0, \label{eq:tic} \\ 
        u_2(x_1, \pm 1, t) &= 0, \label{eq:tid} \\ 
        b_2(x_1, \pm 1, t) &= 0. \label{eq:tie}
    \end{align}
\end{subequations}
We recall the definition of the Leray projection.
\begin{defi}[Leray Projection]\label{def:Leray}
    Let \(h \in H^1\left(\T \times (-1,1)\right)\) be a vector field, and let \(g \in H^1\left(\T \times (-1,1) \right)\) be the unique solution with mean zero of the problem
    \begin{align*}
        \Delta g &=- \dv h ~ \text{on } \T \times (-1, 1)\\
        \pad g(x_1,\pm 1) &= - h_2(x_1,\pm 1).
    \end{align*}
    The Leray projection of \(h\) is 
    \[
    \proj h := h+ \nabla g.
    \]
    In particular, we have $\dv(\proj h)=0$ and  $(\proj h)_2(\cdot, \pm 1)=0$. Moreover, if $\dv h=0$ and $h_2(\cdot, \pm 1)=0$, then $\proj h=h$.
    \end{defi}
We note that if $\vec{c}=(c_1, c_2)$ is a constant vector, then $\proj \vec{c}=(c_1, 0)$. From  standard elliptic estimates, for every $\Nn\ni k\ge 0$, there exists  $C=C(k)>0$ such that 
    \bq\label{bound:Leray}
    \hnorm{\proj h}{k} \le C\hnorm{h}{k}.
    \eq    
 We proceed to clarify that the system \eqref{eq:ti} is an evolution problem for $b$ only.   Let  \(b(x, t)\) be any vector field satisfying the boundary condition \(b_2(x_1, \pm 1, t) = 0\). Let  \(p\) solve
\begin{subequations}
    \label{eq:pti}
    \begin{align}
        \Delta p &= -\dv (\dgr{b}{b} + \g \pay b + b_2\g'e_1) \label{eq:pita} \\ 
        \pad p(x_1, \pm 1, t) &= - (\dgr{b}{b} + \g \pay b + b_2\g'e_1)_2 = 0, \label{eq:pitb}
    \end{align}
\end{subequations}
where the last equality follows from the assumption \(b_2(x_1, \pm 1, t) = 0.\) Then we define \(u\) as the  Leray projection 
\bq\label{u:Leray}
u = \proj (\dgr{b}{b} + \g\pay b + b_2\g'e_1)\equiv \dgr{b}{b} + \g\pay b + b_2\g'e_1+\na p,
\eq
so that $\dv u=0$ and $u$ satisfies \eqref{eq:tib} and \eqref{eq:tid}. Thus the perturbed problem \eqref{eq:ti} is equivalent to seeking a {\it divergence-free vector field \(b\) satisfying \eqref{eq:tia} and \eqref{eq:tie}}, where $u$ is given by \eqref{u:Leray}. Next, we show that the divergence-free condition and the boundary condition \eqref{eq:tie} are propagated by equation \eqref{eq:tia}.  To this end, we suppose that $b(x, t)$ is a sufficiently smooth vector field satisfying \eqref{eq:tia} for $t\in (0, T)$, with  $\dv b(\cdot, 0)=0$ and $b_2(x_1, \pm 1, 0)=0$. Then taking  the divergence of \eqref{eq:tia} and using the fact that $\dv u=0$, we find
\[
\pat \dv b = \dv \pat b=\dv(b\cdot \na u-u\cdot \na b)= -u\cdot \nabla  \dv b\quad\text{in~}D\times (0, T).
\]
Thus $\dv b$, as the unique solution to the above transport equation, must be zero since it is so initially. On the other hand, from  the second component of \eqref{eq:tia} together with \eqref{eq:tid}, we find that the trace $\underline{b}_2(x_1, t):=b_2(x_1, \pm 1, t)$ obeys
\begin{align*}
    \pat \underline{b}_2+u_1\pay \underline{b}_2& = \underline{b}_2\pad u_2  \quad \text{in } \T \times (0, T), \\ 
\underline{b}_2(x_1, 0) &= 0. 
\end{align*}
Again, the unique solution is   $\underline{b}_2=0$ since the initial data is zero.  Thus $b(t)$ will be taken in the space 
\bq\label{def:Hsigma}
H^n_\sigma\equiv H^n_\sigma(D):=\left \{g \in H^n(D; \mathbb{R}^2): \; \dv g = 0,\; g_2(\cdot, \pm 1)=0 \right\}.
\eq
As the first step of the stability analysis, we linearize the equation \eqref{eq:ti} with respect to \(b\) to obtain
\begin{subequations}
\begin{align}\label{eq:b:L}
\p_t b&=-u_{L, 2}\g'e_1+\g\p_1u_L,\\ \label{eq:u:L}
u_L=(u_{L, 1}, u_{L, 2}) &=\g \partial_1 b  + b_2 \g' e_1+\na p_L,\\ \label{div-free}
\dv u_L=\dv b&=0.
\end{align}
\end{subequations}
Substituting \eqref{eq:u:L} into \eqref{eq:b:L} yields 
\bq\label{eq:b:L:2}
\p_tb=\g^2\p_1^2 b+\g \p_1\na p_L+l.o.t.,\quad \dv b=0.
\eq
Although   \eqref{eq:u:L} suggests that $p_L$ and $b$ are at the same regularity level, by  taking the divergence of \eqref{eq:u:L} and using  \eqref{div-free} we find that
$\Delta p_L=2\g'\p_2b_2$. Consequently  $\na p_L$ is  of the same order as $b$, and hence \eqref{eq:b:L:2} simplifies to 
\bq\label{eq:b:linear}
\p_t b=\g^2\p^2_1 b + l.o.t.
\eq
Diffusion in the $x_1$- direction occurs  if $\inf_{x_2\in \T}\vert \g(x_2) \vert > 0$.  However, this partial dissipation vanishes when $b$ is independent of $x_1$. This contains the large class of steady solutions that are independent of $x_1$.  To overcome this, a crucial idea in 
\cite{Elgindi17} followed by \cite{BFV22}  is to decompose  \(b\) into two orthogonal parts, one of them having mean zero in $x_1$. 
\begin{defi}
For \(h: \T \times [-1, 1] \to \mathbb{R}\),
\[
\proj_0 h(x_2) := \fint_{\T} h(y, x_2)\,dy \; \text{and } \; \proj_\perp h(x_1, x_2) := h(x_1,x_2) - \proj_0 h(x_2).
\] 
\end{defi}
Clearly $\proj_\perp h=0$ if and only if $h$ is independent of $x_1$.     Our main result is the following.
\begin{theo}\label{theo:main}
Let \(k \ge 3\), \(m \geq k+5\),  \(\g \in W^{m+1,\infty}((-1, 1))\), and  $c_P$  the Poincar\'e constant in \eqref{Poincare}. There exist positive constants $C_k$ and $C_{m, k}$ such that if $\gamma$ satisfies 
\begin{align}
\label{assumption:gammapmain1}
        c_0: =\inf \vert \g(x_2) \vert > 0, \;   \quad
        \winorm{\g'}{k-1} \le \frac{c_0}{C_k}\quad\text{and}\\
\label{assumption:gammapmain2}
    \winorm{\g'}{m} \le  \frac{c_0}{C_{m, k}c_P},
\end{align}
 then the following holds. There exists \(\eps = \eps(m, k, \frac{c_0}{ c_P}, \| \gamma\|_{W^{k+4, \infty}}) > 0\) such that if $b_0\in H^m_\sigma$ with \( \hnorm{b_0}{m} \leq \eps \)
and \(\proj_0 b_{0,2} = 0\), then \eqref{eq:ti} has a unique global-in-time  solution \(b\), which satisfies \(\proj_0 b_2(\cdot, t)=0\),
\begin{subequations} 
\label{bound:main}
    \begin{align}
        \hnorm{\proj_\perp b(.,t)}{k} &\leq 3\eps e^{-\frac58\left(\frac{c_0}{c_P}\right)^2 t} \label{bound:maina} \\
        \hnorm{\proj_0 b_1(.,t)}{k+2} &\leq 3\eps \label{bound:mainb} \\
        \hnorm{b(.,t)}{m} &\leq 3\eps e^{\frac18 \left(\frac{c_0}{c_P}\right)^2t}\label{bound:mainc}
    \end{align}
\end{subequations}
for \(t \in [0, \infty)\). Consequently, the velocity field satisfies 
\bq\label{decay:u:thm}
\hnorm{u(t)}{k+1} \le C_{m,k}(1 + \winorm{\g}{k+2})\eps e^{-\frac{13}{40}(\frac{c_0}{c_P})^2t},
\eq
and the magnetic field \(B(x ,t) = \g(x_2)e_1 + b(x,t)\) relaxes to a steady state \(\overline{B}=(\gamma(x_2)+\overline{a}(x_2))e_1\) in \(H^{k}\) with $\overline{a}=\lim_{t\to \infty} \proj_0 b_1(t)$ satisfying  \(\hnorm{\overline{a}}{k+2} \leq 3\eps\). 
\end{theo}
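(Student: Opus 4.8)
The plan is a continuity (bootstrap) argument organized around the three quantities in \eqref{bound:main}. First I would record local well-posedness in $H^m_\sigma$: via the Leray formulation \eqref{u:Leray} the system \eqref{eq:ti} reduces to a quasilinear, partially parabolic evolution equation for $b$ alone, for which a standard energy/fixed-point scheme gives a unique local solution; combined with the propagation of $\dv b=0$, $b_2(\cdot,\pm1)=0$ and $\proj_0 b_2\equiv0$ (the last one obtained by applying $\proj_0$ to the $b_2$-equation $\pat b_2+\pay(u_1 b_2-b_1u_2)=\g\pay u_2$, which annihilates every term), this makes the maximal existence time $T^*$ positive. I would then take $T^*$ to be the largest time on which
\[
\hnorm{\proj_\perp b(t)}{k}\le 3\eps e^{-\frac58(c_0/c_P)^2 t},\qquad \hnorm{\proj_0 b_1(t)}{k+2}\le 3\eps,\qquad \hnorm{b(t)}{m}\le 3\eps e^{\frac18(c_0/c_P)^2 t}
\]
all hold, and aim to improve every $3\eps$ to $2\eps$ on $[0,T^*)$, forcing $T^*=\infty$. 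A device used throughout: intermediate norms $\hnorm{b(t)}{k+j}$, $0\le j\le m-k$, are never estimated directly but interpolated between the fast-decaying $H^k$ bound and the slowly-growing $H^m$ bound; demanding that these interpolants still decay is exactly what pins down $m\ge k+5$ and the exponents $\tfrac58,\tfrac18$.

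The heart is the dissipative estimate for $b^\perp:=\proj_\perp b$. Applying $\proj_\perp$ to \eqref{eq:tia} and inserting $u_L=\g\pay b+b_2\g'e_1+\na p_L$ with $\Delta p_L=-2\g'\pay b_2$ gives $\pat b^\perp=\g^2\pay^2 b^\perp+\mathcal R$, where $\mathcal R$ gathers commutators of $\partial^\alpha$ with $\g^2\pay^2$, the pressure contributions, and the quadratic terms $\proj_\perp(\dgr{u}{b}),\proj_\perp(\dgr{b}{u}),\proj_\perp(\dgr{b}{b})$. The $H^k$ energy identity produces the good term $-\int\g^2|\pay\partial^\alpha b^\perp|^2\le -c_0^2\|\pay b^\perp\|_{H^k}^2$; since $\proj_\perp$ kills the $x_1$-mean, the Poincaré inequality \eqref{Poincare} converts a fixed fraction of it into $-c(c_0/c_P)^2\|b^\perp\|_{H^k}^2$, and the rest must swallow $\mathcal R$. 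For the commutators I would integrate by parts once in $x_1$ to reach $\|\pay b^\perp\|_{H^k}^2$ with coefficient $\|\g\|_{W^{k-1,\infty}}\|\g'\|_{W^{k-1,\infty}}$, then use that $\inf|\g|=c_0$ forces $\|\g\|_{W^{k-1,\infty}}\lesssim c_0+\|\g'\|_{W^{k-2,\infty}}\lesssim c_0$, so the whole commutator is $\lesssim(c_0/C_k)c_0\|\pay b^\perp\|_{H^k}^2$ and is absorbed for $C_k$ large; the pressure terms, using that $\Delta p_L=-2\g'\pay b_2$ exhibits an explicit $\g'$ and an explicit $\pay$, are likewise bounded by $\|\g'\|_{W^{k-1,\infty}}$ times the dissipation; and the quadratic terms are $O(\eps)$ small once $\hnorm{b}{k+1},\hnorm{b}{k+2}$ are interpolated. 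This improves \eqref{bound:maina} to the $2\eps$-bound.

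For the shear part $\ol a:=\proj_0 b_1$, writing the first component of \eqref{eq:tia} as $\pat b_1+\pad(u_2b_1-b_2u_1)+u_2\g'=\g\pay u_1$ and applying $\proj_0$ annihilates $\g\pay\proj_0 u_1$ and $\g'\proj_0 u_2$ ($\proj_0 u_2=0$ since $\dv u=0$ and $u_2(\cdot,\pm1)=0$), leaving $\pat\ol a=-\pad\proj_0(u_2^\perp b_1^\perp-b_2^\perp u_1^\perp)$: purely quadratic in the decaying $\perp$-part, the shear self-interaction $\ol ae_1\cdot\na(\ol ae_1)$ vanishing identically. Measuring this in $H^{k+2}$, with $u^\perp$ costing one derivative more than $b^\perp$, the right-hand side is $\lesssim\hnorm{b^\perp}{k+4}\hnorm{b^\perp}{k+3}$, and interpolating against the $H^k$ decay and $H^m$ growth — here $m\ge k+5$ is used so that even the $H^{k+4}$ interpolant decays — gives $\|\pat\ol a\|_{H^{k+2}}\lesssim\eps^2e^{-\d t}$, whence $\hnorm{\ol a(t)}{k+2}\le\eps+C\eps^2<2\eps$ and $\ol a(t)\to\ol a_\infty$ in $H^{k+2}$. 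The $H^m$ estimate for $b$ repeats the scheme at top order: $-c_0^2\|\pay b^\perp\|_{H^m}^2$ is isolated, the $\g'$-errors are absorbed via \eqref{assumption:gammapmain2} (which is where $c_P$ enters the hypothesis), and the quadratic terms $\dgr{u}{b},\dgr{b}{u}$, which naively require $\hnorm{b}{m+1}$ because $b\mapsto u$ loses a derivative through $\g\pay b$, are controlled by that same parabolic gain: after $\int(\dgr{u}{\partial^\alpha b})\cdot\partial^\alpha b=0$ (valid since $\dv u=0$, $u\cdot n=0$) the surviving commutators carry only one derivative on the $u$-factor, those in $x_1$ being absorbed into the dissipation and those in $x_2$ rebalanced by an integration by parts whose boundary terms are killed by $b_2(\cdot,\pm1)=0$. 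This gives $\ddt\|b\|_{H^m}^2\le\tfrac14(c_0/c_P)^2\|b\|_{H^m}^2+O(\eps)\|b\|_{H^m}^2$, improving \eqref{bound:mainc} to $2\eps$; with all three bounds improved, $T^*=\infty$.

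It then remains to read off the conclusions. The velocity bound \eqref{decay:u:thm} follows since $u=\proj(\dgr{b}{b}+\g\pay b+b_2\g'e_1)$ sees $b$ only through $b^\perp$ (the shear parts vanish or drop out under $\proj_0$), so $\hnorm{u(t)}{k+1}\lesssim(1+\winorm{\g}{k+2})\hnorm{b^\perp(t)}{k+2}+\hnorm{b(t)}{k+1}\hnorm{b^\perp(t)}{k+2}$, and $\hnorm{b^\perp}{k+2}\le\hnorm{b^\perp}{k}^{3/5}\hnorm{b^\perp}{m}^{2/5}$ produces the rate $-\tfrac58\cdot\tfrac35+\tfrac18\cdot\tfrac25=-\tfrac{13}{40}$ (for $m>k+5$ it is only faster). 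Finally $B(t)=\g e_1+b^\perp(t)+\ol a(t)e_1\to(\g+\ol a_\infty)e_1=\ol B$ in $H^k$ by the first two steps, $\ol B$ being a shear flow and hence a steady Euler state with $\ol u=0$, and $\hnorm{\ol a_\infty}{k+2}\le3\eps$; uniqueness is routine from the local theory. I expect the main difficulty to lie in the $H^m$ estimate — closing it despite the apparent loss of a derivative in $b\mapsto u$, which works only because $\g^2\pay^2$ is genuinely parabolic in $x_1$ — together with the bookkeeping that makes all three decay/growth rates compatible, which is what forces $m\ge k+5$.
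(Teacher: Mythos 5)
Your bootstrap skeleton matches the paper's: set up local well-posedness, decompose $b$ into the $x_1$-mean shear $\overline a=\proj_0 b_1$ and the mean-free part $b^\perp=\proj_\perp b$, and run a continuity argument that improves each $3\eps$ to $2\eps$ on the three quantities in \eqref{bound:main}. Your $H^k$ estimate for $b^\perp$ by a direct energy inequality (dissipation $-c_0^2\|\pay b^\perp\|_{H^k}^2$, Poincar\'e to fill in the missing $x_2$-direction, $\g'$-commutators and pressure terms absorbed via \eqref{assumption:gammapmain1}, nonlinear terms $O(\eps)$ by interpolation) is in the same spirit as the paper's Proposition \ref{prop:lenergy} plus Duhamel; the paper packages this as a semigroup bound $\|e^{tL_a}\|_{H^k_0\to H^k_0}\le e^{-\frac58(c_0/c_P)^2 t}$ and applies the variation-of-constants formula, which tracks the rate more cleanly, but the content is the same. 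Your $\overline a$ step and your velocity/relaxation conclusions (including the $-\tfrac{13}{40}$ exponent from interpolating $\hnorm{b^\perp}{k+2}$ between $H^k$ and $H^m$) are correct and agree with the paper.

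The gap is in the $H^m$ estimate for \eqref{bound:mainc}. You correctly identify the danger --- the constitutive law $u=\proj(b\cdot\na b+\g\pay b+b_2\g'e_1)$ means $u$ is effectively at the level of $\pay b$, so $\partial^\alpha(b\cdot\na u)$ contains $b\cdot\na\partial^\alpha u$, which naively costs $m+1$ derivatives of $b$ --- but the mechanism you propose to fix it does not close. You dispose only of $\int(\dgr{u}{\partial^\alpha b})\cdot\partial^\alpha b=0$, which kills the \emph{transport} term $u\cdot\na b$; the problematic term is the \emph{stretching} term $b\cdot\na u$, and its top-order contribution $\int\partial^\alpha b\cdot(b\cdot\na\partial^\alpha u)$ is not a commutator and is not absorbed by the partial dissipation (after integrating by parts it becomes $-\int\partial^\alpha u\cdot(b\cdot\na\partial^\alpha b)$, which still has $m+1$ derivatives on $b$; the dissipation $-c_0^2\|\pay b^\perp\|_{H^m}^2$ controls neither $\|b\|_{H^{m+1}}$ nor $\|u\|_{H^{m+1}}$, since the $x_1$-independent shear $a(x_2)$ contributes to $\pad b_1$ without any gain). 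The device in the paper's Proposition \ref{prop:local} is to test the $b$-equation \eqref{eq:tia} with $\partial^{2\alpha}b$ \emph{and simultaneously} the constitutive law \eqref{eq:tib} with $\partial^{2\alpha}u$, which places $+\|u\|_{\dot H^m}^2$ on the left-hand side and produces an exact cancellation $\int\partial^\alpha b\cdot(b\cdot\na\partial^\alpha u)=-\int\partial^\alpha u\cdot(b\cdot\na\partial^\alpha b)$ between the two tested equations, so that only genuine commutators survive. Two subsidiary points you should address in the $H^m$ step: (a) that cancellation requires commuting $\proj$ past $\partial^\alpha$, and the resulting Leray commutator must be controlled in a tame form involving only $Du,Dv$ (this is the paper's Lemma \ref{lemm:commuler}, used via the trick of subtracting constant vectors so the estimate sees only $\g'$); and (b) the whole point of \eqref{bound:refinedwell} is that the growth exponent involves only $\|\g'\|_{W^{m,\infty}}^2$ and not $\|\g\|$, otherwise \eqref{assumption:gammapmain2} would not tame the $e^{Ct}$ growth down to $e^{\frac18(c_0/c_P)^2 t}$. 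You gesture at absorbing the $\g'$-errors via \eqref{assumption:gammapmain2}, but without the paired testing and the tame Leray-commutator bound this step does not actually close.
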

 The proof of Theorem \ref{theo:main} (see \eqref{lowerboun:a}) shows that if  the initial perturbation satisfies \(\proj_0 b_{0, 1} \ge  C \eps^2\) for $C=C(m, k, \winorm{\g}{k+4})>0$ sufficiently large, then steady state \(\overline{B}\) will be different from the shear flow $\gamma(x_2)e_1$. Theorem \ref{theo:main} justifies the accessibility of a class of shear flows \(\overline{B}=(\gamma(x_2)+\overline{a}(x_2))e_1\) under Moffatt's MRE.  
 
 Theorem \ref{theo:main} remains valid, with easier proof, when the channel is replaced with the torus $\T^2$. When $\gamma$ is a nonzero constant, the conditions \eqref{assumption:gammapmain1} and \eqref{assumption:gammapmain2} are satisfied and Theorem \ref{theo:main} reduces to Theorem 5.1 in \cite{BFV22}. On the other hand, we stress that, as discussed in Section  \ref{sec:intro}, the conditions \eqref{assumption:gammapmain1} and \eqref{assumption:gammapmain2} allow for shear flows $\gamma(x_2)e_1$ which are {\it not} perturbations of constant shear flows, namely, $\gamma'$ can be arbitrarily large. 
 
 The remainder of this section is devoted to the proof of Theorem \ref{theo:main}. We will follow the approach in \cite{BFV22}, which consists of proving that $f(t):=\proj_\perp b(t)$ decays exponentially while $a(t):=\proj_0 b(t)$ remains small for all time, both being achieved  by a single bootstrap. The non-constancy of $\gamma(x_2)$ induces extra terms in both the linear and nonlinear evolutions of $f$ and $a$. A careful treatment of these terms is needed in order to arrive at the conditions \eqref{assumption:gammapmain1} and \eqref{assumption:gammapmain2}. 
\subsection{Local well-posedness}
In \cite{BFV22}, local well-posedness of the system \eqref{eq:gen} was proven in the periodic setting $D=\T^d$, together with the norm estimate 
\bq\label{HsboundBFV}
\| B(t)\|_{\dot H^s(\T^d)}\le \| B_0\|_{\dot H^s(\T^d)}^2\exp\left(C\int_0^t \| \na u(r)\|_{L^\infty(\T^d)}+\| \na B(r)\|_{L^\infty(\T^d)}^2dr\right)
\eq
for $t\in [0, T_*)$ and $s>1+\frac{d}{2}$. For constant shear flows $\gamma(x_2)=\gamma$,  \eqref{HsboundBFV} implies that the same estimate holds for $b(t)=B(t)-\gamma e_1$ since $\| b(t)\|_{\dot H^s}=\| B(t)\|_{\dot H^s}$. For non-constant shear flows, we need to directly establish local well-posedness and norm estimates  for the perturbed system \eqref{eq:ti}, which additionally involves  the physical boundary of the channel. To this end, we need the following  commutator estimate, which is standard  in $\Rr^d$ and $\T^d$ (see e.g. Lemma 3.4 in \cite{Majda-Bertozzi}).
\begin{lemm}[Commutator Estimate ]\label{lemm:commu}
For all $1\le m\in \Nn$, there exists a constant $C=C(m)>0$ such that for all  \(u, v \in H^m(D)\) and all $\alpha\in \Nn^2$ with $|\alpha|=m$, we have  
\begin{align}
    \tnorm{[\partial^\alpha , u] v} &\le C \bigl( \pnorm{Du}{\infty} \pnorm{\partial^{m-1}v}{2} + \tnorm{\partial^m u} \pnorm{v}{\infty} + \pnorm{Du}{\infty} \pnorm{v}{\infty}  \bigr), \label{bound:commu2}
\end{align}
where
\( \pnorm{\partial^m u}{p} := \sum_{\vert \kappa \vert = m} \pnorm{\partial^\kappa u}{p}.\)
\end{lemm}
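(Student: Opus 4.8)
The plan is to derive \eqref{bound:commu2} from the Leibniz rule combined with Gagliardo--Nirenberg interpolation, in the classical Moser style, with the one extra ingredient that $D=\T\times(-1,1)$ has a boundary. Expanding,
\[
[\partial^\alpha, u]v = \partial^\alpha(uv) - u\,\partial^\alpha v = \sum_{0\neq\beta\le\alpha}\binom{\alpha}{\beta}\,\partial^\beta u\,\partial^{\alpha-\beta}v,
\]
a finite sum with coefficients depending only on $m$. The extreme term $\beta=\alpha$ contributes $\partial^\alpha u\cdot v$, bounded in $L^2$ by $\tnorm{\partial^m u}\,\pnorm{v}{\infty}$ directly; this is the second term on the right of \eqref{bound:commu2}. (When $m=1$ there are no other terms, so assume $m\ge 2$ below.) It remains to estimate $\tnorm{\partial^\beta u\,\partial^{\alpha-\beta}v}$ for each $\beta$ with $1\le|\beta|=:j\le m-1$.

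Fix such a $\beta$ and set $\theta:=\tfrac{j-1}{m-1}\in[0,1)$. By H\"older with the conjugate exponents $\bigl(\tfrac{2(m-1)}{j-1},\tfrac{2(m-1)}{m-j}\bigr)$ (reciprocals summing to $\tfrac12$, with the convention $\tfrac{2(m-1)}{0}=\infty$),
\[
\tnorm{\partial^\beta u\,\partial^{\alpha-\beta}v}\ \le\ \pnorm{\partial^\beta u}{2(m-1)/(j-1)}\ \pnorm{\partial^{\alpha-\beta}v}{2(m-1)/(m-j)}.
\]
Writing $\partial^\beta u=\partial^{\beta'}(Du)$ with $|\beta'|=j-1$ and applying the (dimension-free) Gagliardo--Nirenberg inequality to $Du$ at top order $m-1$, and to $v$ at top order $m-1$ (note $0\le m-j\le m-1$), one gets on the bounded domain $D$
\[
\pnorm{\partial^\beta u}{2(m-1)/(j-1)}\le C\bigl(\pnorm{Du}{\infty}^{1-\theta}\tnorm{\partial^m u}^{\theta}+\pnorm{Du}{\infty}\bigr),
\]
\[
\pnorm{\partial^{\alpha-\beta}v}{2(m-1)/(m-j)}\le C\bigl(\pnorm{v}{\infty}^{\theta}\tnorm{\partial^{m-1}v}^{1-\theta}+\pnorm{v}{\infty}\bigr),
\]
where the $+\pnorm{Du}{\infty}$ and $+\pnorm{v}{\infty}$ terms are the lower-order corrections that the Gagliardo--Nirenberg inequalities pick up on a domain with boundary. (Alternatively, one may eliminate these by extending $u-\fint_D u$ and $v$ to $\T\times\Rr$ via a bounded Sobolev extension operator, applying the known periodic estimate, and restricting; the Poincar\'e inequality on $D$ bounds $\|u-\fint_D u\|_{W^{1,\infty}}$ by $C\pnorm{Du}{\infty}$, so no $\pnorm{u}{\infty}$ appears.) Multiplying the two bounds, the leading product $\pnorm{Du}{\infty}^{1-\theta}\tnorm{\partial^m u}^{\theta}\pnorm{v}{\infty}^{\theta}\tnorm{\partial^{m-1}v}^{1-\theta}$ equals $(ad)^{1-\theta}(bc)^{\theta}$ with $a=\pnorm{Du}{\infty}$, $b=\tnorm{\partial^m u}$, $c=\pnorm{v}{\infty}$, $d=\tnorm{\partial^{m-1}v}$, and Young's inequality gives $(ad)^{1-\theta}(bc)^{\theta}\le ad+bc$, i.e. it is bounded by $\pnorm{Du}{\infty}\tnorm{\partial^{m-1}v}+\tnorm{\partial^m u}\pnorm{v}{\infty}$. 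The remaining cross terms are handled the same way: $x^{1-\theta}y^{\theta}\le x+y$ turns each of them into a sum of terms each dominated by $\pnorm{Du}{\infty}\tnorm{\partial^{m-1}v}+\tnorm{\partial^m u}\pnorm{v}{\infty}+\pnorm{Du}{\infty}\pnorm{v}{\infty}$. Summing over the finitely many admissible $\beta$ and absorbing the $m$-dependent combinatorial constants into $C=C(m)$ yields \eqref{bound:commu2}.

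The argument is entirely routine once the boundary is dealt with, so I expect the only point requiring real care to be exactly that: invoking the Gagliardo--Nirenberg interpolation inequalities in the correct form on $D=\T\times(-1,1)$ (with their lower-order correction terms, which are precisely what makes the third term $\pnorm{Du}{\infty}\pnorm{v}{\infty}$ appear), or equivalently setting up a bounded extension operator and tracking how the restriction reintroduces lower-order norms. The secondary bookkeeping task is the choice of interpolation exponents, arranged so that the two Gagliardo--Nirenberg factors carry conjugate powers $\theta$ and $1-\theta$, which is what makes Young's inequality collapse everything onto the three terms on the right-hand side of \eqref{bound:commu2}.
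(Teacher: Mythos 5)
Your proof is correct and follows essentially the same route as the paper: Leibniz expansion, rewriting $\partial^\beta u$ as a derivative of $Du$, H\"older with conjugate exponents $2(m-1)/(j-1)$ and $2(m-1)/(m-j)$, Gagliardo--Nirenberg interpolation on the bounded domain (with the lower-order corrections that produce the $\pnorm{Du}{\infty}\pnorm{v}{\infty}$ term), and Young's inequality. The only cosmetic difference is that you peel off the $\beta=\alpha$ term separately, whereas the paper treats it as the degenerate endpoint of the same interpolation.
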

\begin{proof}
We have  
    \begin{align*}
    \partial^\alpha (uv) - u \partial^\alpha v&=\sum_{\beta'\le  \alpha,~|\beta'|\ge 1} c_{\alpha, \beta'} \partial^{\beta'}u \partial^{\alpha-\beta'}v.
    \end{align*}
If $m=1$, then 
\[
\tnorm{ \partial^\alpha (uv) - u \partial^\alpha v} =\tnorm{ \partial^\alpha u v}\le \| \partial^\alpha u \|_{L^2}\| v\|_{L^\infty},
\]
so \eqref{bound:commu2} holds. Next, for $m\ge 2$, H\"older's inequality implies 
    \begin{align*}
     \tnorm{\partial^\alpha (uv) - u \partial^\alpha v} 
        &\le   \sum_{ |\beta|+|\nu |= m- 1} c'_{\beta, \nu} \tnorm{\partial^{\beta}Du \partial^\nu v} \\ 
        & \le C \sum_{ |\beta|+|\nu |= m- 1}   \pnorm{\partial^\beta Du}{{2(m-1)/\vert \beta \vert}} \pnorm{\partial^{\nu}  v}{2 (m-1) / \vert\nu\vert}.
    \end{align*}
    Using the Gagliardo-Nirenberg inequality on  bounded domains \cite{Nirenberg}, we obtain
    \begin{align*}
    &\pnorm{\partial^\beta Du}{2(m-1)/\vert \beta \vert}  \le C \bigl( \pnorm{Du}{\infty}^{1 - \vert \beta \vert / (m-1)} \tnorm{\partial^m u}^{\vert \beta \vert / (m-1)} + \pnorm{Du}{\infty} \bigr),\\
    &\pnorm{\partial^\nu v}{2(m-1)/\vert \nu \vert}  \le C \bigl( \pnorm{v}{\infty}^{1 - \vert \nu\vert / (m-1)} \tnorm{\partial^{m-1} v}^{\vert \nu\vert / (m-1)} + \pnorm{v}{\infty} \bigr).
    \end{align*}
    Combining these inequalities with Young's inequality yields \eqref{bound:commu2}.
 \end{proof}
In $\Rr^d$, the term $ \pnorm{Du}{\infty} \pnorm{v}{\infty}$ in \eqref{bound:commu2} is not needed. However, for  bounded domains such as $\T\times (-1, 1)$, \eqref{bound:commu2} is false if the term is removed. A counter-example is $\alpha=(0, 3)$, $u(x_1, x_2)=x_2^2$, and $v(x_1, x_2)=x_2$. 

The next Lemma addresses commutator between  Leray's projector and vector fields  in $D$. 
\begin{lemm}[Commutator estimate for Leray's projector] \label{lemm:commuler} Let \(m,k \in \mathbb{N}\) and \(m \ge k \ge 3\). There exists a constant \(C = C(m)\) such that for all \(u,v\in H^m(D),\) with $u\cdot n\vert_{\p D}=0$, we have
\begin{equation}
    \label{est:lecommp}
    \hnorm{[\proj, u\cdot \na]v}{m} \le C (\hnorm{u}{m}\hnorm{Dv}{k-1} + \hnorm{u}{k}\hnorm{Dv}{m-1}).
\end{equation}
Furthermore, if \(\dv v = 0\), the preceding estimate improves to
\begin{equation}\label{est:lecomm}
    \hnorm{[\proj, u\cdot \na]v}{m} \le C (\hnorm{Du}{m-1}\hnorm{Dv}{k-1} + \hnorm{Du}{k-1}\hnorm{Dv}{m-1}).
\end{equation}
\end{lemm}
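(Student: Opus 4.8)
\textbf{Proof proposal for Lemma \ref{lemm:commuler}.}

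The plan is to reduce the commutator $[\proj, u\cdot\na]v$ to an elliptic problem and then apply the Gagliardo--Nirenberg-type product estimates, exactly as in the proof of Lemma \ref{lemm:commu}. Recall from Definition \ref{def:Leray} that $\proj h = h + \na g_h$ where $\Delta g_h = -\dv h$ and $\pad g_h(\cdot,\pm 1) = -h_2(\cdot,\pm 1)$. Writing $h = u\cdot\na v$ and $\tilde h = u\cdot\na(\proj v)$ (or rather tracking $\proj(u\cdot\na v) - u\cdot\na(\proj v)$), one finds that $[\proj, u\cdot\na]v = \na q$ where $q$ is the mean-zero solution of a Neumann problem whose data is controlled by $v$, $\na v$ and $u$. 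Concretely, since $\dv(\proj v) = \dv v - \dv v' $ etc., the scalar $q$ solves $\Delta q = F$ in $D$, $\pad q(\cdot,\pm 1) = G(\cdot,\pm 1)$ on $\p D$, with $F = -\dv\big((u\cdot\na)\na g_v\big) + (\text{terms from }[\dv, u\cdot\na])$ and $G$ coming from the boundary traces; using $\dv u = 0$ is not assumed, but $u\cdot n = 0$ on $\p D$ is, which is what makes the boundary term manageable. The key algebraic identity to establish first is therefore an explicit formula $[\proj, u\cdot\na]v = \na q$ together with the elliptic system satisfied by $q$, making visible that every term in the data is a product of a derivative of $u$ (or $u$ itself) with one more derivative of $v$ than appears on the left.

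Once that formula is in hand, I would invoke standard elliptic regularity for the Neumann problem on $D = \T\times(-1,1)$: $\hnorm{\na q}{m} \le C(\|F\|_{H^{m-1}} + \|G\|_{H^{m-1/2}(\p D)})$, the trace term being handled by a trace theorem plus interior control. Then the right-hand side is a sum of terms of the schematic form $\p^{\beta} u \cdot \p^{\nu}(Dv)$ with $|\beta| + |\nu| \le m$, $|\beta| \ge 1$ in the general case. For \eqref{est:lecommp} I would bound these by Hölder with exponents $2(m)/|\beta|$, $2m/|\nu|$ (as in Lemma \ref{lemm:commu}) and then Gagliardo--Nirenberg on the bounded domain \cite{Nirenberg}, interpolating between $\hnorm{u}{m}$ and $\hnorm{u}{k}$ on one factor and $\hnorm{Dv}{m-1}$ and $\hnorm{Dv}{k-1}$ on the other; Young's inequality then yields the stated product structure (one factor at top regularity $m$, the other at the fixed regularity $k$), where the hypothesis $m \ge k \ge 3$ guarantees that $H^{k-1}$ (and hence $H^{k}$) embeds into $W^{1,\infty}(D)$ so the low-regularity factors can always be placed in $L^\infty$. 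For the improvement \eqref{est:lecomm} under $\dv v = 0$: when $v$ is divergence free, $g_v$ from the Leray projection solves $\Delta g_v = 0$ with $\pad g_v(\cdot,\pm 1) = -v_2(\cdot,\pm 1)$, but then $v_2(\cdot,\pm1)$ need not vanish in general — however, the point is that the data $F$ for $q$ loses its ``$\dv v$'' contributions and all surviving terms involve $Du$ rather than $u$ (the lowest-order piece $u\cdot\na v$ contributes, after taking divergence and using $\dv v = 0$, only terms with at least one derivative landing on $u$), so every term is a product $\p^\beta(Du)\,\p^\nu(Dv)$, giving the homogeneous bound.

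The main obstacle I expect is the careful bookkeeping at the boundary. On $\T\times(-1,1)$ the Leray projector is not a Fourier multiplier, and one cannot simply commute $\proj$ past $u\cdot\na$ as on $\T^d$; the boundary terms in the Neumann data for $q$ must be shown to be (i) well-defined traces and (ii) controlled with the claimed product structure and, crucially, with one fewer derivative than a naive count suggests — this is where $u\cdot n|_{\p D} = 0$ is used to kill the would-be top-order boundary contribution (compare the identity $\pad p(\cdot,\pm1) = 0$ derived from $b_2(\cdot,\pm1)=0$ in \eqref{eq:pitb}). A secondary technical point is that the Gagliardo--Nirenberg inequality on a bounded domain carries additive lower-order terms (as already seen in the proof of Lemma \ref{lemm:commu}), so the interpolation must be done so that these extra terms are absorbed into $\hnorm{u}{k}\hnorm{Dv}{m-1}$ or $\hnorm{Du}{k-1}\hnorm{Dv}{m-1}$ respectively; the condition $k \ge 3$ (rather than $k\ge 2$) is presumably what gives enough room for this, since one needs both $H^{k-1}\hookrightarrow L^\infty$ and $H^{k-2}\hookrightarrow$ (some positive-exponent Lebesgue space) in two dimensions.
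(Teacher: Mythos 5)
The paper's proof takes a completely different and much shorter route: it cites Theorem 1.5 of \cite{HuyToan} for the bound $\hnorm{[\proj, u\cdot \na]v}{m} \le C \hnorm{u}{m}\hnorm{v}{m}$, upgrades it to the tame estimate $\hnorm{[\proj, u\cdot \na]v}{m} \le C (\hnorm{u}{m}\hnorm{v}{k} + \hnorm{u}{k}\hnorm{v}{m})$ by substituting a tame product bound and $H^{k-1}\hookrightarrow L^\infty$ into that proof, and then converts $v$ to $Dv$ and $u$ to $Du$ by the algebraic observations that $[\proj, u\cdot\na](v+\vec{c})=[\proj, u\cdot\na]v$ for constant $\vec{c}$, and that $[\proj, \vec{e}\cdot\na]v=0$ whenever $\dv v=0$; one then replaces $v$ by $v-\vec{e}\fint_D v$ and $u$ by $u-\vec{e}\fint_D u$ and invokes Poincar\'e--Wirtinger.

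Your proposal contains a genuine gap. The central reduction $[\proj, u\cdot\na]v = \na q$ is false. Writing $\proj h = h+\na g_h$, the commutator is $\na g_{u\cdot\na v} - (u\cdot\na)\na g_v$; equivalently, $[\proj, u\cdot\na]v = \proj\bigl(u\cdot\na(\mathrm{Id}-\proj)v\bigr) - (\mathrm{Id}-\proj)\bigl(u\cdot\na\proj v\bigr)$, which manifestly has both a gradient part and a divergence-free part. A single Neumann problem for a scalar $q$ therefore does not capture the commutator, and the divergence-free piece (which is precisely what \cite{HuyToan} estimates) is left unaddressed.

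Even granting an elliptic reduction of the gradient part, your mechanism for the refined bounds does not work. For \eqref{est:lecommp} you need $v$ to appear only through $Dv$, but the term $(u\cdot\na)\na g_v$ depends on the full trace $v_2|_{\p D}$ and on $\dv v$ at top order, so a naive count produces $\hnorm{v}{m}$, not $\hnorm{Dv}{m-1}$; the paper obtains the improvement by subtracting a constant from $v$ (which leaves the commutator invariant), and your sketch supplies no analogue of that step. For \eqref{est:lecomm} you argue that $\dv v=0$ forces every term to carry $Du$. That is correct for the scalar $\dv\bigl((u\cdot\na)\proj v\bigr)=\p_i u_j\,\p_j(\proj v)_i$, but the vector $(u\cdot\na)\na g_v$ itself carries $u$ undifferentiated, so the claim does not hold for the commutator as a whole. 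Again, the paper's mechanism is a constant-subtraction in $u$, justified by the separate identity $[\proj,\vec{e}\cdot\na]v=0$ for divergence-free $v$, and your argument does not recover this.
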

\begin{proof}
    The following estimate was proven in Theorem 1.5 in \cite{HuyToan}:
      \[
      \hnorm{[\proj, u\cdot \na]v}{m} \le C \hnorm{u}{m}\hnorm{v}{m}.
      \]
Following the proof in \cite{HuyToan} and  replacing the product estimate $\| fg\|_{H^m}\le C\| f\|_{H^m}\| g\|_{H^m}$ with the tame estimate  
        \[
    \hnorm{fg}{m} \le C(\hnorm{f}{m}\pnorm{g}{\infty}+ \pnorm{f}{\infty}\hnorm{g}{m}), 
    \]
    and invoking the Sobolev embedding \(H^{k-1} \hookrightarrow L^\infty\), we find that
    \bq\label{cmtt:00}
    \hnorm{[\proj, u\cdot \na]v}{m} \le C (\hnorm{u}{m}\hnorm{v}{k} + \hnorm{u}{k}\hnorm{v}{m}).
    \eq
   For any constant vector $\vec{c}$, we recall that $\proj  \vec{c}=(c_1, 0)$ is a constant vector, whence \([\proj, u\cdot \na](v+\vec{c})=[\proj, u\cdot \na]v\).  In particular, subtracting \(\vec{e}\fint_D v\), $\vec{e}=(1, 1)$, from \(v\) in \eqref{cmtt:00} and applying the Poincar\'{e}-Wirtinger inequality, we obtain
    \begin{align*}
    \hnorm{[\proj, u\cdot \na]v}{m} = \hnorm{[\proj, u\cdot \na](v-\vec{e}\fint_D v)}{m} &\le C (\hnorm{u}{m}\hnorm{v-\fint_D v}{k} + \hnorm{u}{k}\hnorm{v-\fint_D v}{m}) \\
    &\le C(\hnorm{u}{m}\hnorm{Dv}{k-1} + \hnorm{u}{k}\hnorm{Dv}{m-1}).
    \end{align*}
    This proves \eqref{est:lecommp}. On the other hand, if \(\dv v = 0\) then \([\proj, (u+\vec{e})\cdot \na]v=[\proj, u\cdot \na]v\). Therefore, replacing \(u\) with \(u - \vec{e}\fint_D u\) in \eqref{est:lecommp}, we obtain \eqref{est:lecomm}.
\end{proof}
The fact that right-hand side of \eqref{est:lecomm} involves only $Du$ and $Dv$ is crucial in obtaining the growth estimate \eqref{bound:refinedwell} which involves only $\gamma'$ and not $\gamma$.
\begin{prop}[Local well-posedness] \label{prop:local}
Let $3\le k \le  m\in \Nn$ and $\g \in W^{m+1, \infty}((-1, 1))$.  There exists  \(C = C(m, \winorm{\g'}{m})>0\) such that for any $b_0\in H^m_\sigma(D)$,  the problem  \eqref{eq:ti} has a unique solution
\( b \in C([0, T]; H^m_\sigma(D))\), where 
\[
T= \frac{1}{2C^2} \log \left(\mez + \frac{1}{2\hnorm{b_0}{m}^2}\right).
\]
Moreover, $b$ and $u$ satisfy the bounds
\begin{equation}\label{Hmbound:bu}
\hnorm{b(t)}{m} \leq \sqrt{2}\| b_0\|_{H^m}e^{C^2t}~\forall t\in [0, T],\quad\text{and}~ \| u\|_{L^2([0, T]; H^m)}\le 2C\sqrt{T} e^{2C^2T}\Big(\sqrt{2}\| b_0\|_{H^m}^2+\| b_0\|_{H^m}\Big).
\end{equation}
Furthermore, there exists \(c_m = c_m(m)\) such that
\begin{equation}
    \label{bound:refinedwell}
    \hnorm{b(t)}{m}^2 \le \hnorm{b_0}{m}^2 \exp\left( c_m \int_0^t \hnorm{Db}{k-1}^2  + \pnorm{Du}{\infty} + \winorm{\g'}{m}^2 ds\right).
\end{equation}
\end{prop}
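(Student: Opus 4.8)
The plan is to prove Proposition~\ref{prop:local} by an energy method organized around the degenerate-parabolic structure hidden in \eqref{eq:ti}. As already noted in the text, \eqref{eq:ti} is equivalent to the quasilinear evolution $\pat b+u\cdot\na B=B\cdot\na u$ for $b\in H^m_\sigma(D)$, where $B=\g e_1+b$ and $u=\proj(B\cdot\na B)=\proj(\dgr{b}{b}+\g\pay b+b_2\g'e_1)$; with the Darcy law ($\sigma=0$) the velocity $u$ is only one derivative less regular than $b$, and $B\cdot\na u$ acts on $b$ as a degenerate second-order operator with principal part $(B\cdot\na)^2b$ and non-positive symbol $-|B\cdot\xi|^2$. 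I would construct solutions through a parabolic regularization (adding $\d\Delta b^\d$ with boundary conditions compatible with the Leray projection) or a Galerkin scheme adapted to $H^m_\sigma(D)$, obtain the uniform bounds from the a priori estimate below, and pass to the limit; the invariance of $\dv b=0$ and $b_2|_{\p D}=0$ has already been verified in the text, so the limit lies in $C([0,T];H^m_\sigma(D))$.

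The heart of the proof is the a priori $H^m$ estimate. Applying $\p^\a$ with $|\a|\le m$ to $\pat b+u\cdot\na B=B\cdot\na u$ and pairing with $\p^\a b$, the transport contribution $\int_D u\cdot\na\p^\a b\cdot\p^\a b$ vanishes by $\dv u=0$, $u\cdot n=0$, while the leading part of $\int_D\p^\a(B\cdot\na u)\cdot\p^\a b$ is, after an integration by parts using $\dv B=0$, $B\cdot n=0$ and $u=\proj(B\cdot\na B)$,
\[
\int_D(B\cdot\na)^2\p^\a b\cdot\p^\a b=-\tnorm{(B\cdot\na)\p^\a b}^2+(\text{boundary and lower-order terms}),
\]
exactly paralleling the $L^2$ identity $\tfrac12\ddt\tnorm{B}^2=-\tnorm{u}^2$. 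Every remaining contribution is a genuine remainder: those that formally carry $m+1$ derivatives of $b$ always occur contracted against the factor $(B\cdot\na)\p^\a b$ — this is precisely how the one-derivative loss in the constitutive law is compensated — and are absorbed by Young's inequality into the good term, while the truly lower-order remainders are estimated with the commutator bounds of Lemmas~\ref{lemm:commu} and \ref{lemm:commuler}; here it is crucial to use the divergence-free improvement \eqref{est:lecomm}, which keeps only $Db$ and $D(\g e_1)=\g'e_1$ on the right, so that no norm of $\g$ itself — only of $\g'$ — ever enters. Collecting, one gets a Riccati-type inequality $\ddt\hnorm{b}{m}^2+c\sum_{|\a|\le m}\tnorm{(B\cdot\na)\p^\a b}^2\le C\hnorm{b}{m}^2(1+\hnorm{b}{m}^2)$ with $C=C(m,\winorm{\g'}{m})$, whose solution stays below $2\hnorm{b_0}{m}^2e^{2C^2t}$ on the interval $[0,T]$ from the statement, giving $\hnorm{b(t)}{m}\le\sqrt2\|b_0\|_{H^m}e^{C^2t}$; integrating the dissipative term in time then controls $u$ in $L^2([0,T];H^m)$, since $\p^\a u$ coincides with $\proj\bigl((B\cdot\na)\p^\a b\bigr)$ up to terms already estimated.

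For the refined bound \eqref{bound:refinedwell} I would rerun the same computation, replacing the crude Riccati bookkeeping by a careful one: each subcritical norm of $b$ appearing in a remainder is bounded via $H^{k-1}\hookrightarrow L^\infty$ (this is where $k\ge3$ enters) by $\hnorm{Db}{k-1}$, each transport-type factor by $\pnorm{Du}{\infty}$, and each term carrying derivatives of $\g$ by $\winorm{\g'}{m}$ (again through \eqref{est:lecomm}), so that the differential inequality becomes $\ddt\hnorm{b}{m}^2\le c_m(\hnorm{Db}{k-1}^2+\pnorm{Du}{\infty}+\winorm{\g'}{m}^2)\hnorm{b}{m}^2$, and Gr\"onwall yields \eqref{bound:refinedwell}. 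Uniqueness follows from an $L^2$ difference estimate for two solutions with the same data, the dissipation once more absorbing the worst difference terms.

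The main obstacle is exactly the one-derivative loss in the Darcy law $u=\proj(B\cdot\na B)$: a naive $H^m$ estimate does not close because $\p^\a u$ is not a priori in $L^2$, and one must see that the offending top-order terms are matched by the degenerate-parabolic dissipation $\sum_{|\a|\le m}\tnorm{(B\cdot\na)\p^\a b}^2$ generated by the $\g\pay u$ (equivalently $B\cdot\na u$) term. A secondary but technically delicate point — needed for the global analysis later — is to arrange every estimate so the constants depend only on $\inf|\g|$ (through the dissipation) and on $\winorm{\g'}{m}$, never on $\winorm{\g}{m}$ or $\pnorm{\g}{\infty}$, which forces the systematic use of \eqref{est:lecomm} rather than \eqref{est:lecommp}. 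Finally, since the domain is the channel and not the torus, the many integrations by parts produce boundary terms that must be shown to vanish using $b_2|_{\p D}=0$ together with $\dv b=0$.
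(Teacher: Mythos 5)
Your overall strategy---$H^m$ energy estimate, commutator control, Riccati inequality for \eqref{Hmbound:bu}, Gr\"onwall for \eqref{bound:refinedwell}, and the insistence on using only $\g'$ (never $\g$) via \eqref{est:lecomm}---matches the paper's. But there is a concrete error in how you identify the good term. You claim that after substituting $u=\proj(B\cdot\na B)$ and integrating by parts, the leading contribution to $\int_D\p^\a(B\cdot\na u)\cdot\p^\a b$ is $\int_D(B\cdot\na)^2\p^\a b\cdot\p^\a b=-\tnorm{(B\cdot\na)\p^\a b}^2$, "exactly paralleling" the $L^2$ identity. The Leray projector does not disappear here. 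After one integration by parts (using $\dv B=0$, $B\cdot n=0$) and using that $\proj$ is a self-adjoint orthogonal projection, what you actually obtain at top order is $-\int_D (B\cdot\na\p^\a b)\cdot\p^\a u\approx -\tnorm{\proj(B\cdot\na\p^\a B)}^2\approx -\tnorm{\p^\a u}^2$, modulo Leray commutators $[\p^\a,\proj]$. Note that the $L^2$ identity you invoke is $\tfrac12\ddt\tnorm{B}^2=-\tnorm{u}^2$, \emph{not} $-\tnorm{B\cdot\na B}^2$, which is precisely the warning that the dissipation is only as strong as the projected quantity. Since $\tnorm{\proj g}\le\tnorm{g}$, your claimed good term $\tnorm{(B\cdot\na)\p^\a b}^2$ is strictly stronger than the one that materializes; if the top-order remainders really contract against the unprojected $(B\cdot\na)\p^\a b$, the available dissipation $\tnorm{u}_{\dot H^m}^2$ cannot absorb them by Young's inequality. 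This is a genuine gap, not a presentational slip.

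The paper avoids this by never substituting the constitutive law into the $b$-equation: it applies $\p^\a$ to \eqref{eq:tia} and pairs with $\p^\a b$, applies $\p^\a$ to \eqref{eq:tib} and pairs with $\p^\a u$, and adds. The constitutive pairing contributes $+\hdnorm{u}{m}^2$ directly to the left side, the principal transport terms cancel after integration by parts, the Leray commutators are handled via Lemma~\ref{lemm:commuler} (whose right side involves only $Du$, $Db$ and hence only $\g'$), and every remainder is bounded by $\hnorm{u}{m}\times(\text{lower order in }b)$, so Young's inequality absorbs half of $\hdnorm{u}{m}^2$. To salvage your argument you should adopt the same two-pairing scheme, replacing $\tnorm{(B\cdot\na)\p^\a b}^2$ by $\hdnorm{u}{m}^2$ as the good term everywhere; the parts of your proposal concerning $k\ge 3$ (for $H^{k-1}\hookrightarrow L^\infty$), the dependence only on $\winorm{\g'}{m}$, and the Gr\"onwall derivation of \eqref{bound:refinedwell} can then be carried over unchanged.
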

\begin{proof}
We will only derive a priori estimates in $H^m$ since the existence and uniqueness follow from standard arguments. By the discussion preceding \eqref{def:Hsigma}, we will have $b(t)\in H^m_\sigma$. We take the inner product of \eqref{eq:tia} with \(b\), \eqref{eq:tib} with \(u\) and add the resulting equations. Then integrating by parts in $x_1\in \T$ yields
\begin{align}
    &\mez \ddt \pnorm{b}{2}^2 + \pnorm{u}{2}^2 \nonumber \\ 
    &= \int_{D} b \cdot (\dgr{b}{u}) - \int_{D} b \cdot (\dgr{u}{b}) + \int_{D} u \cdot (\dgr{b}{b}) \nonumber \\ 
    &\quad + \int_{D} b \cdot \g \pay u - \int_{D} b \cdot (u_2 \g'e_1) + \int_{D} u \cdot \g \pay b + \int_{D} u \cdot (b_2 \g' e_1) + \int_{D} u \cdot \na p \nonumber \\
    &= - \int_{D} u \cdot (\dgr{b}{b}) +\int_{\p D}(b\cdot n)(u\cdot b)+ \int_{D} u \cdot (\dgr{b}{b}) \nonumber \\ 
    &\quad  + \int_{D} b \cdot \g \pay u - \int_{D} b \cdot (u_2 \g'e_1) - \int_{D} \pay u \cdot \g b + \int_{D} u \cdot (b_2\g' e_1) \nonumber \\
    &= - \int_{D} b \cdot (u_2\g'e_1) + \int_{D} u \cdot (b_2\g' e_1) 
    \leq 2\pnorm{\g'}{\infty} \pnorm{u}{2} \pnorm{b}{2},
    \label{eq:well0}
\end{align}
where we have used \eqref{eq:tic}, \eqref{eq:tid} and \eqref{eq:tie}.

We denote the commutator of two operators \(A\) and \(B\) by \(\comm{A}{B}\). 
Applying \(\partial^\alpha\) to \eqref{eq:tia} and \eqref{eq:tib} and summing over \(\vert \alpha \vert = m\), we obtain
\begin{align*}
    &\mez \ddt \hdnorm{b}{m}^2 + \hdnorm{u}{m}^2 \\
    &= {\sum_{\vert \alpha \vert = m}}\int_{D} \partial^{ \alpha } b \cdot  \partial^{ \alpha }(\dgr{b}{u}) - \int_{D}  \partial^{ \alpha }b \cdot  \partial^{ \alpha }(\dgr{u}{b}) 
     + \int_{D}  \partial^{ \alpha }b \cdot \partial^{ \alpha }(\g \pay u ) 
     - \int_{D}  \partial^{ \alpha }b \cdot \partial^{ \alpha }(u_2 \g' e_1)\\ 
     &\quad +  {\sum_{\vert \alpha \vert = m}} \int_{D}  \partial^{ \alpha }u \cdot \partial^{ \alpha }\bigl(\proj(\dgr{b}{b})\bigr) +\int_{D} \partial^{ \alpha }u \cdot  \partial^{ \alpha }\bigl(\proj(\g \pay b) \bigr)+ \int_{D}  \partial^{ \alpha } u \cdot  \partial^{ \alpha }\bigl(\proj(b_2\g' e_1)\bigr).
\end{align*}
Commuting $\proj$, we rewrite the the second line  as 
\begin{align*}
    &  {\sum_{\vert \alpha \vert = m}} \int_{D}  \partial^{ \alpha }u \cdot \partial^{ \alpha }\bigl(\proj(\dgr{b}{b})\bigr) +\int_{D} \partial^{ \alpha }u \cdot  \partial^{ \alpha }\bigl(\proj(\g \pay b) \bigr)+ \int_{D}  \partial^{ \alpha } u \cdot  \partial^{ \alpha }\bigl(\proj(b_2\g' e_1)\bigr) \\ 
    &= {\sum_{\vert \alpha \vert = m}} \int_{D}  \partial^{ \alpha }u \cdot \partial^{ \alpha }(\dgr{b}{\proj b}) +\int_{D} \partial^{ \alpha }u \cdot  \partial^{ \alpha }(\g e_1 \cdot \na\proj b) + \int_{D}  \partial^{ \alpha } u \cdot  \partial^{ \alpha }(b \cdot \na \proj\g e_1) \\ 
    & \; \qquad + \int_D \partial^\alpha u\cdot  \partial^\alpha([\proj, b\cdot \na ]b)
    + \int_D \partial^\alpha u \cdot \partial^\alpha([\proj, \g e_1 \cdot \na]b) + \int_D\partial^\alpha u\cdot \partial^\alpha  ([\proj, b\cdot \na]\g e_1).
\end{align*}
Since both \(b\) and \(\g(x_2)e_1\) are divergence free, we can drop \(\proj\) in the second line above to arrive at
\begin{align}
     &\mez \ddt \hdnorm{b}{m}^2 + \hdnorm{u}{m}^2 \nonumber \\
     &= {\sum_{\vert \alpha \vert = m}}\int_{D} \partial^{ \alpha } b \cdot  \partial^{ \alpha }(\dgr{b}{u}) - \int_{D}  \partial^{ \alpha }b \cdot  \partial^{ \alpha }(\dgr{u}{b}) 
     \quad +\int_{D}  \partial^{ \alpha }u \cdot \partial^{ \alpha }(\dgr{b}{b})\label{eq:well1} \\ 
     & \quad  +  {\sum_{\vert \alpha \vert = m}} \int_{D}  \partial^{ \alpha }b \cdot \partial^{ \alpha }(\g \pay u ) 
     - \int_{D}  \partial^{ \alpha }b \cdot \partial^{ \alpha }(u_2 \g' e_1) + \int_{D} \partial^{ \alpha }u \cdot  \partial^{ \alpha }(\g \pay b) + \int_{D}  \partial^{ \alpha } u \cdot  \partial^{ \alpha }(b_2\g' e_1) \label{eq:well2} \\ 
      & \quad  +  {\sum_{\vert \alpha \vert = m}} \int_D \partial^\alpha u\cdot  \partial^\alpha([\proj, b\cdot \na ]b)
    + \int_D \partial^\alpha u \cdot \partial^\alpha([\proj, \g e_1 \cdot \na]b) + \int_D\partial^\alpha u\cdot \partial^\alpha u ([\proj, b\cdot \na]\g e_1) \label{eq:wellcommu2}
\end{align}
Commuting $\partial^\alpha$ in  \eqref{eq:well1} gives
\begin{equation}
\begin{aligned}
\eqref{eq:well1}&={\sum_{\vert \alpha \vert = m}}\int_{D}  \partial^{ \alpha }b \cdot \comm{ \partial^{ \alpha }}{b } \cdot \na u - \int_{D}  \partial^{ \alpha } b \cdot \comm{ \partial^{ \alpha }}{u} \cdot \na b+ \int_{D}  \partial^{ \alpha }u \cdot \comm{ \partial^{ \alpha }}{b}\cdot \na b\\
&\quad+ \sum_{\vert \alpha \vert = m}\int_{D} \partial^{ \alpha } b \cdot  (\dgr{b}\partial^{ \alpha }{u}) - \int_{D}  \partial^{ \alpha }b \cdot (\dgr{u} \partial^{ \alpha }{b}) 
     \quad +\int_{D}  \partial^{ \alpha }u \cdot (b\cdot \na \partial^{ \alpha } b).
\end{aligned}
\label{eq:well1re}
\end{equation}
The second line in \eqref{eq:well1re} vanishes since  
\[
\int_{D}  \partial^{ \alpha }b \cdot (\dgr{u} \partial^{ \alpha }{b})=\mez \int_D u\cdot \na  |\partial^{ \alpha } b|^2=\mez \int_{\p D} (u\cdot n)  |\partial^{ \alpha } b|^2=0
 \]
 and 
\[
\int_{D} \partial^{ \alpha } b \cdot  (\dgr{b}\partial^{ \alpha }{u})=-\int_{D}  \partial^{ \alpha }u \cdot (b\cdot \na \partial^{ \alpha } b)+\int_{\p D}(b\cdot n)(\partial^\alpha u\cdot \partial^\alpha b)=-\int_{D}  \partial^{ \alpha }u \cdot (b\cdot \na \partial^{ \alpha } b).
\]
The  commutator estimate \eqref{bound:commu2} implies
\begin{align}
    \eqref{eq:well1}
     &\lesssim \hnorm{u}{m} \hnorm{b}{m} \| Db\|_{L^\infty}+ \hnorm{b}{m}^2 \pnorm{Du}{\infty} \label{eq:well31} \\ 
     &\lesssim \hnorm{u}{m} \hnorm{b}{m}^2, \label{eq:well3}
\end{align}
where we have  used the Sobolev embedding \(H^k \hookrightarrow W^{1,\infty}\) for \(k \ge 3\).\\ 
Regarding  \eqref{eq:well2}, we use the commutator \(\comm{ \partial^{ \alpha }}{\g}\) in the third term and integrate by parts in $x_1$ to obtain 
\begin{align*}
 \int_{D} \partial^{ \alpha }u \cdot  \partial^{ \alpha }(\g \pay b)&= \int_{D} \partial^{ \alpha }u \cdot  [\partial^{ \alpha }, \g] \pay b + \int_{D} \partial^{ \alpha }u \cdot  (\g \partial^{ \alpha } \pay b)\\
 &=\int_{D} \partial^{ \alpha } u \cdot  [\partial^{ \alpha }, \g] \pay b - \int_{D} \p_1 \partial^{ \alpha } u \cdot \gamma  \partial^{ \alpha }  b,
\end{align*}
whence
\begin{align*}
   \eqref{eq:well2}&= 
   \sum_{|\alpha|=m}\int_{D}  \partial^{ \alpha }b \cdot \comm{ \partial^{ \alpha }}{\g} \pay u
     - \int_{D}  \partial^{ \alpha }b \cdot  \partial^{ \alpha }(u_2 \g' e_1) \\
     & \quad + \int_{D}  \partial^{ \alpha } u \cdot \comm{ \partial^{ \alpha }}{\g} \pay b
    +\int_{D}  \partial^{ \alpha } u \cdot  \partial^{ \alpha }(b_2\g' e_1).
\end{align*}
Applying the commutator estimate \eqref{bound:commu2} again, we find 
\begin{align}
     \eqref{eq:well2}
     & \lesssim \hnorm{b}{m} \Big(\pnorm{\g'}{\infty} \hnorm{u}{m} + \tnorm{\partial^m \g} \pnorm{Du}{\infty} +\pnorm{\g'}{\infty} \pnorm{Du}{\infty} + \hnorm{u\g'}{m}\Big) \nonumber \\
     & \quad + \hnorm{u}{m}\Big(\pnorm{\g'}{\infty} \hnorm{b}{m} + \tnorm{\partial^m \g} \pnorm{Db}{\infty} + \pnorm{\g'}{\infty} \pnorm{Db}{\infty} + \hnorm{b\g'}{m}\Big) \nonumber \\
     & \lesssim \winorm{\g'}{m} \hnorm{b}{m} \hnorm{u}{m}. \label{eq:well42}
\end{align}
As for \eqref{eq:wellcommu2}, since \(\dv \g(x_2)e_1 = \dv b = 0\), we can use the commutator estimate \eqref{est:lecomm} to get
\begin{align}
    \eqref{eq:wellcommu2} \lesssim \hnorm{u}{m}\hnorm{Db}{m-1}{\hnorm{Db}{k-1}} + \hnorm{u}{m}\hnorm{\g'}{m-1} \hnorm{Db}{m-1}. \label{est:commuler}
\end{align}
We notice that the right-hand side involves the norm of $\gamma'$ and not $\gamma$.  In view of  the estimates \eqref{eq:well0}, \eqref{eq:well3}, \eqref{eq:well42}, and \eqref{est:commuler}, there exists \(C = C(m, \winorm{\g'}{m}) > 0\) such that
\[
\mez \ddt \hnorm{b}{m}^2 + \hnorm{u}{m}^2 \leq C \hnorm{u}{m} (\hnorm{b}{m}^2 + \hnorm{b}{m}).
\]
It follows that 
\bq\label{Hmenergy}
 \ddt \hnorm{b}{m}^2 + \hnorm{u}{m}^2 \leq C^2 (\hnorm{b}{m}^2 + \hnorm{b}{m})^2\le 2C^2(\| b\|_{H^m}^4+\| b\|_{H^m}^2).
\eq
Discarding the good term $\hnorm{u}{m}^2$ and solving the resulting differential inequality for $\hnorm{b}{m}^2$, we deduce that 
\[
\| b(t)\|_{H^m}^2\le \frac{A(t)}{1-A(t)}
\]
provided that $A(t):=\frac{\| b_0\|_{H^m}^2}{\| b_0\|_{H^m}^2+1}e^{2C^2t}<1$. In particular, for 
\[
T = \frac{1}{2C^2} \log \left(\mez + \frac{1}{2\hnorm{b_0}{m}^2}\right),
\]
we have $A(t)\le \mez$ for all $t\le T$ and 
\[
\| b(t)\|_{H^m}^2 \le 2A\le 2\| b_0\|_{H^m}^2e^{2C^2t}\quad\forall t\in [0, T].
\]
Returning to \eqref{Hmenergy}, we  integrate in time to deduce 
\[
\| u\|_{L^2([0, T]; H^m)}^2\le 2C^2T\Big(\| b\|_{L^\infty([0, T]; H^m)}^4+\| b\|_{L^\infty([0, T]; H^m)}^2\Big)\le 4C^2Te^{4C^2T}\Big(2\| b_0\|_{H^m}^4+\| b_0\|_{H^m}^2\Big).
\]
This completes the proof of the bounds in \eqref{Hmbound:bu}. 

On the other hand, it follows from \eqref{eq:well0}, \eqref{eq:well31}, \eqref{eq:well42}, \eqref{est:commuler}, and Young's inequality that 
\[
\ddt \hnorm{b}{m}^2 \le C_m \hnorm{b}{m}^2 \left(\hnorm{Db}{k-1}^2 + \pnorm{Du}{\infty} + \winorm{\g'}{m}^2\right).
\]
Then  \eqref{bound:refinedwell} follows from this and   Gronwall's inequality.
\end{proof}
Compared  to \eqref{HsboundBFV}, the upper bound  \eqref{bound:refinedwell}  shows that $\gamma'(x_2)\ne 0$ may contribute to the growth of  $b(t)=B(t)-\gamma(x_2)e_1$ when  the background shear flow $\gamma(x_2)e_1$ is non-constant.

As long as the solution $b(t)$ exists, we set 
\begin{subequations}
    \label{aux}
    \begin{align}
        a &= a(x_2,t) = \proj_0 b_1(x_1,x_2,t), \label{auxa} \\ 
        f &= f(x_1, x_2, t) = \proj_\perp b(x_1, x_2,t), \label{auxb} \\ 
        w &= w(x_1, x_2, t) = \proj_\perp u(x_1, x_2, t), \label{auxc} \\ 
        v &= \proj(\dgr{b}{b}). \label{auxd}
    \end{align}
    \end{subequations}
    The following lemma collects useful algebraic relations between the projections $\proj_0$, $\proj_\perp$, and $\proj$.
\begin{lemm}\label{prop:alg}
When acting on sufficiently smooth functions (or vector fields) defined on $\T\times [-1, 1]$, we have \begin{align}
   \pay \proj_0 &= \proj_0 \pay = 0, \\ 
   \pad \proj_0 &= \proj_0\pad, \\
   \proj_0 (h(x_2)g) &= h(x_2) \proj_0g, \\
\proj_0\proj_\perp &= \proj_\perp\proj_0 = 0,    \\ 
    \pay \proj &= \proj \pay,\\
    \proj_0\proj &= \proj\proj_0,\quad \proj_\perp\proj = \proj\proj_\perp.\label{pzp}
\end{align}
\end{lemm}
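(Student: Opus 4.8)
The plan is to verify each of the identities in Lemma~\ref{prop:alg} directly from the definitions of $\proj_0$, $\proj_\perp$, and $\proj$, treating them as elementary consequences of differentiation under the integral sign, the fundamental theorem of calculus on $\T$, and the uniqueness statement in Definition~\ref{def:Leray}. First I would record that $\proj_0 h(x_2) = \fint_\T h(y, x_2)\, dy$ depends only on $x_2$, so $\pay \proj_0 h = 0$; and $\proj_0 \pay h(x_2) = \fint_\T \pay h(y, x_2)\, dy = \fint_\T \partial_y h(y, x_2)\, dy = 0$ by periodicity in $y$, which gives the first line. The second line, $\pad \proj_0 = \proj_0 \pad$, is differentiation under the integral sign in the $x_2$ variable (legitimate for smooth functions on the compact domain). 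The third line is linearity of the $x_1$-average together with the fact that $h(x_2)$ is constant in the averaging variable $y$. The identity $\proj_0 \proj_\perp = 0$ follows because $\proj_\perp g$ has $x_1$-mean zero by construction, so averaging it in $x_1$ gives $0$; and $\proj_\perp \proj_0 g = \proj_0 g - \proj_0(\proj_0 g) = \proj_0 g - \proj_0 g = 0$ since $\proj_0 g$ is already $x_1$-independent and $\proj_0$ is a projection onto $x_1$-independent functions.

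For the Leray-projection identities, the cleanest route is to use the characterization $\proj h = h + \nabla g$ where $g$ is the unique mean-zero solution of $\Delta g = -\dv h$ with $\pad g(\cdot, \pm 1) = -h_2(\cdot, \pm 1)$, and to observe that the elliptic problem commutes with the relevant operations. For $\pay \proj = \proj \pay$: applying $\pay$ to $h$, the associated potential is $\pay g$, because $\Delta(\pay g) = \pay \Delta g = -\dv(\pay h)$ and $\pad(\pay g)(\cdot, \pm 1) = \pay(\pad g)(\cdot, \pm 1) = -\pay h_2(\cdot, \pm 1) = -(\pay h)_2(\cdot, \pm 1)$, and $\pay g$ has mean zero; hence $\proj(\pay h) = \pay h + \nabla \pay g = \pay(h + \nabla g) = \pay \proj h$. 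The one subtlety here is that $\pay h$ may fail the hypothesis $h \in H^1$ in the lowest regularity case, but since the lemma is stated for ``sufficiently smooth'' functions this is not an issue. For $\proj_0 \proj = \proj \proj_0$: if $g$ is the potential for $h$, then $\proj_0 g$ (the $x_1$-average) solves the elliptic problem for $\proj_0 h$, because averaging in $x_1$ commutes with $\Delta$ (using $\pay \proj_0 = 0$, so $\Delta \proj_0 g = \pad^2 \proj_0 g = \proj_0 \pad^2 g = \proj_0 \Delta g = -\proj_0 \dv h = -\dv \proj_0 h$, the last step because $\proj_0 \pay h_1 = 0$ kills the horizontal divergence) and with the boundary trace; $\proj_0 g$ also has mean zero over $D$. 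Hence $\proj(\proj_0 h) = \proj_0 h + \nabla \proj_0 g = \proj_0(h + \nabla g) = \proj_0 \proj h$, where in the last equality I use that $\nabla \proj_0 g = (\pay \proj_0 g, \pad \proj_0 g) = (0, \proj_0 \pad g) = \proj_0 \nabla g$ since $\proj_0$ annihilates any $\pay$. The identity $\proj_\perp \proj = \proj \proj_\perp$ then follows by writing $\proj_\perp = I - \proj_0$ and using linearity of $\proj$ together with $\proj_0 \proj = \proj \proj_0$.

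The main obstacle—though it is more bookkeeping than genuine difficulty—is keeping straight which operations legitimately commute with the Leray elliptic problem and verifying that the mean-zero normalization is preserved under each of them, so that the uniqueness clause in Definition~\ref{def:Leray} can be invoked to identify the transformed potential. Care is also needed with the boundary conditions: for the $\proj_0$ identity one must check that $\proj_0$ applied to the Neumann trace $h_2(\cdot, \pm 1)$ equals the Neumann trace of $\proj_0 g$, which is immediate since the $x_1$-average of a boundary trace is the boundary trace of the $x_1$-average. Once these commutation facts are in hand, every line of the lemma is a one- or two-step verification, and I would present them in the order listed, grouping the three purely elementary $\proj_0$/$\proj_\perp$ identities first and the three Leray identities afterward.
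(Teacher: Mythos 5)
Your proof is correct and follows essentially the same approach as the paper: the elementary $\proj_0$/$\proj_\perp$ identities are verified directly, and the key identity $\proj_0\proj=\proj\proj_0$ is established by showing that the $x_1$-average of the Leray potential solves the same mean-zero Neumann problem as the potential for $\proj_0 h$, then invoking uniqueness. The only cosmetic difference is that the paper declines to spell out $\pay\proj=\proj\pay$, treating it as trivial, while you give the (correct) one-line uniqueness argument for it as well.
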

\begin{proof}
    The only nontrivial assertion is \eqref{pzp}. Moreover, it suffices to prove  $\proj_0\proj = \proj\proj_0$ as this implies $\proj_\perp\proj = \proj\proj_\perp$. We note that the first two identities imply $\na \proj_0=\proj_0\na$. Consider \(g:\T \times [-1,1] \to \mathbb{R}^2\) and let \(h, k: \T \times [-1, 1] \to \mathbb{R}\)  be the unique solution with mean zero of  the following problems:
    \begin{equation*}
        \begin{cases}
            \Delta h = -\dv g, \\
            \pad h(x_1, \pm 1) = - g_2(x_1, \pm 1)
        \end{cases}
        ,\qquad 
        \begin{cases}
            \Delta k = -\dv (\proj_0 g), \\ 
            \pad k(x_1, \pm 1) =  -(\proj_0 g_2)(x_1, \pm 1).
        \end{cases}
    \end{equation*}
    Note that $k=k(x_2)$ and $\proj g=g+\na h$, $\proj \proj_0 g=\proj_0g+\na k$, whence $\proj_0\proj g=\proj_0 g+\proj_0\na h=\proj_0 g+\na  \proj_0h$. Consequently,  to obtain \eqref{pzp} it suffices to show \(k = \proj_0 h\). To this end we observe that 
    \bq
\begin{cases}
        \Delta \proj_0 h = \proj_0 \Delta h = -\proj_0 \dv g =- \dv(\proj_0 g),\\
    \pad (\proj_0 h)(x_1, \pm 1) = \proj_0 (\pad h)(x_1, \pm 1) = - \proj_0 g_2(x_1, \pm 1).
    \end{cases}
    \eq
    Thus $\proj_0 h$ and $k$ are solutions to the same Neumann problem and both have mean zero. Therefore,  $\proj_0h=k$ by uniqueness.
\end{proof}
We have the  following elementary Poincar{\'e}-type lemma. 
  \begin{lemm}[Poinar{\'e}'s inequality] \label{lemm:poincare}
        Consider the horizontally simple region       \[\Omega=\Big\{(x_1, x_2): a \le x_2 \le b,~ 
            h_1(x_2) \le x_1 \le h_2(x_2) \Big\}.
            \]
          There exists a smallest constant $c_P>0$ such that  if \(g: \Omega \to \mathbb{R}\)    is absolutely continuous in \(x_1\) then 
    \bq\label{Poincare}
        \left\| g - \fint_{h_1(x_2)}^{h_2(x_2)} g(s, x_2) ds \right\|_{L^2(\Omega)} \le {c_P} \| \pay g\|_{L^2(\Omega)}.
        \eq 
    Moreover,
 \bq\label{est:cP}  c_P \le  \max_{x_2 \in [a,b]} \{h_2(x_2) - h_1(x_2)\}.
        \eq  
\end{lemm}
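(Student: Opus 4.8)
The plan is to prove the Poincaré inequality \eqref{Poincare} on the horizontally simple region $\Omega$ by a one-dimensional argument in the $x_1$-variable, treating $x_2$ as a parameter, and then integrating over $x_2$. Fix $x_2 \in [a, b]$ and write $I(x_2) := [h_1(x_2), h_2(x_2)]$ and $\ell(x_2) := h_2(x_2) - h_1(x_2)$. For $g(\cdot, x_2)$ absolutely continuous on $I(x_2)$, the mean-zero function $\tilde g(x_1, x_2) := g(x_1, x_2) - \fint_{I(x_2)} g(s, x_2)\, ds$ satisfies the one-dimensional Poincaré–Wirtinger inequality on the interval $I(x_2)$, namely
\[
\int_{I(x_2)} |\tilde g(x_1, x_2)|^2\, dx_1 \le C_1(\ell(x_2))^2 \int_{I(x_2)} |\partial_1 g(x_1, x_2)|^2\, dx_1,
\]
with the sharp constant $C_1 = 1/\pi^2$; for the purposes of \eqref{est:cP} even the crude bound $C_1 \le 1$ suffices. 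The quickest self-contained way to get a constant no larger than $\ell(x_2)^2$ is: for $x_1, y \in I(x_2)$ write $g(x_1) - g(y) = \int_y^{x_1} \partial_1 g(s)\, ds$, so that $\tilde g(x_1) = \fint_{I(x_2)} \big(g(x_1) - g(y)\big)\, dy = \fint_{I(x_2)} \int_y^{x_1} \partial_1 g(s)\, ds\, dy$, apply Cauchy–Schwarz twice, and bound the resulting double integral by $\ell(x_2)^2 \int_{I(x_2)} |\partial_1 g|^2$.

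Next I would integrate this pointwise (in $x_2$) inequality over $x_2 \in [a, b]$:
\[
\|\tilde g\|_{L^2(\Omega)}^2 = \int_a^b \int_{I(x_2)} |\tilde g|^2\, dx_1\, dx_2 \le \Big(\max_{x_2 \in [a,b]} \ell(x_2)\Big)^2 \int_a^b \int_{I(x_2)} |\partial_1 g|^2\, dx_1\, dx_2 = \Big(\max_{x_2 \in [a,b]} \ell(x_2)\Big)^2 \|\partial_1 g\|_{L^2(\Omega)}^2.
\]
This shows that the set of admissible Poincaré constants is nonempty and bounded below by $0$, so it has an infimum $c_P \ge 0$; one checks $c_P > 0$ by testing against a nonconstant function (e.g.\ $g = x_1$ on a region with $\ell$ not identically zero, or more carefully a smooth bump). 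That $c_P$ is actually attained — i.e.\ is a minimum, the "smallest constant" — follows from a standard compactness/lower-semicontinuity argument: the best constant is $c_P^2 = \sup \{ \|\tilde g\|_{L^2(\Omega)}^2 / \|\partial_1 g\|_{L^2(\Omega)}^2 \}$ over nonconstant-in-$x_1$ functions $g$, and a maximizing sequence, normalized appropriately, has a weakly convergent subsequence in the relevant space whose limit is a maximizer by weak lower semicontinuity of the $L^2$ norm of the derivative together with compactness of the embedding into $L^2(\Omega)$. Taking the supremum over all admissible $g$ in the displayed inequality immediately yields the bound \eqref{est:cP}, $c_P \le \max_{x_2 \in [a,b]} \{h_2(x_2) - h_1(x_2)\}$.

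The only mildly delicate point is the attainment statement (that there is a genuine smallest constant rather than merely an infimum), which requires a compactness argument in a space adapted to the one-dimensional differentiation — essentially $L^2$ in $x_2$ of $H^1$ in $x_1$, modulo the $x_1$-average. If one does not want to invoke this, the cleanest fix is to read "smallest constant" simply as the infimum of admissible constants; every statement used later (in particular the appearance of $c_P$ in Theorem \ref{theo:main} and the channel case $h_1 \equiv 0$, $h_2 \equiv 2\pi$ or $h_1 \equiv -1$, $h_2 \equiv 1$, where $\ell$ is constant and the sharp value is available) goes through with the infimum. The main routine obstacle is thus not conceptual but bookkeeping: making sure the one-dimensional Poincaré constant is stated with the explicit dependence $\ell(x_2)^2$ so that pulling the maximum of $\ell$ out of the $x_2$-integral is legitimate, which is exactly what the two applications of Cauchy–Schwarz above arrange.
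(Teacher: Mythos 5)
Your proof is correct and follows essentially the same route as the paper: fix $x_2$, prove the one-dimensional Poincar\'e--Wirtinger inequality on $I(x_2)$ with constant $\ell(x_2)^2$ via the fundamental theorem of calculus and Cauchy--Schwarz, and then integrate over $x_2$. Your added scruple about whether the best constant $c_P$ is genuinely attained (the compactness argument you sketch is actually doubtful here, since control is only on the $x_1$-derivative) is well taken, but as you observe the infimum reading suffices for all subsequent uses; the paper does not address this point at all.
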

\begin{proof}
   By the fundamental theorem of calculus we have
    \[
    g(x_1, x_2) - \fint_{h_1(x_2)}^{h_2(x_2)} g(s, x_2) \, ds  = \fint_{h_1(x_2)}^{h_2(x_2)} g(x_1, x_2) - g(s, x_2) ds = \fint_{h_1(x_2)}^{h_2(x_2)} \int_s^{x_1} \pay g(t, x_2) \, dt\, ds.
    \]
    H\"{o}lder's inequality implies
    \[
    \vert g(x_1, x_2) - \fint g(s, x_2) \, ds \vert ^2 \le (h_2(x_2) - h_1(x_2)) \int_{h_1(x_2)}^{h_2(x_2)} \vert \pay g(t, x_2) \vert ^2 \, dt.
    \]
    Taking the integral of both sides of the inequality over \(\Omega\) concludes the proof.
\end{proof}
For $\Omega=\T\times (-1, 1)$, \eqref{Poincare} implies that 
\bq
\| \proj_\perp g\|_{L^2(\Omega)}\le c_P\| \p_1 g\|_{L^2(\Omega)}.
\eq
\subsection{Evolution equations for $a$ and $f$}
To derive the evolution equations for \(f\) and \(a\), we will make use of the following lemma concerning  stream functions on \(D = \T \times (-1, 1)\).
\begin{lemm}[Existence of stream functions] \label{lemm:stream}
Let \(h:\T \times [-1, 1] \to \mathbb{R}^2\) be divergence-free and either  $h_2(\cdot, 1)=0 $ or $h_2(\cdot, -1)=0$. Then \(h\) has a stream function \(\psi: \T \times [-1, 1] \to \mathbb{R}\), i.e. \(h = (-\pad \psi , \pay \psi)\). If in addition \(\int_{-1}^1 h_1(x_1, y)\,dy = 0\) for all \(x_1 \in \T\), then \(\psi(x_1, \pm 1) = 0\).
\end{lemm}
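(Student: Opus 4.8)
The plan is to construct $\psi$ explicitly by integrating $h_1$ in the vertical variable, and then to verify the two scalar identities $\pad\psi=-h_1$ and $\pay\psi=h_2$ using $\dv h=0$ together with the boundary hypothesis on $h_2$. First I would assume $h_2(\cdot,-1)=0$; the case $h_2(\cdot,1)=0$ is symmetric and handled by integrating from the top rather than the bottom (equivalently, by the reflection $x_2\mapsto-x_2$). I would then set
\[
\psi(x_1,x_2):=-\int_{-1}^{x_2}h_1(x_1,y)\,dy.
\]
Since $h_1$ is $2\pi$-periodic in $x_1$, being defined on $\T\times[-1,1]$, so is $\psi$; thus $\psi$ is a well-defined function on $\T\times[-1,1]$ with the same regularity as $h$ in $x_1$ and one extra derivative in $x_2$ (for $h$ of mere Sobolev regularity this is to be read with weak derivatives). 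This single-valuedness is the one place where the hypothesis is genuinely used, and I expect it to be the only real obstacle, albeit a mild one: on the cylinder $\T\times(-1,1)$ a divergence-free field need not admit a single-valued stream function, the obstruction being the horizontal flux $\int_\T h_2(x_1,c)\,dx_1$, which by $\dv h=0$ and periodicity in $x_1$ is independent of $c\in[-1,1]$ and which vanishes precisely because $h_2(\cdot,-1)=0$; the formula above is exactly the antiderivative made available by this cancellation.

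Next I would differentiate. The fundamental theorem of calculus gives $\pad\psi=-h_1$ outright, so the first component of $h=(-\pad\psi,\pay\psi)$ is correct irrespective of the boundary condition. For the second component I would differentiate under the integral sign and invoke $\dv h=0$ in the form $\pay h_1=-\pad h_2$:
\[
\pay\psi(x_1,x_2)=-\int_{-1}^{x_2}\pay h_1(x_1,y)\,dy=\int_{-1}^{x_2}\pad h_2(x_1,y)\,dy=h_2(x_1,x_2)-h_2(x_1,-1)=h_2(x_1,x_2),
\]
the last equality being where $h_2(\cdot,-1)=0$ enters. This yields $h=(-\pad\psi,\pay\psi)$. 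If $h$ has only $H^n$ regularity the same computation is run with weak derivatives and Fubini's theorem, which is routine.

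Finally, for the boundary values: by construction $\psi(x_1,-1)=0$, and if in addition $\int_{-1}^{1}h_1(x_1,y)\,dy=0$ for every $x_1\in\T$, then $\psi(x_1,1)=-\int_{-1}^{1}h_1(x_1,y)\,dy=0$, so $\psi(x_1,\pm1)=0$. In the alternative case $h_2(\cdot,1)=0$ I would instead take $\psi(x_1,x_2)=\int_{x_2}^{1}h_1(x_1,y)\,dy$; the same lines give $h=(-\pad\psi,\pay\psi)$ with $\psi(x_1,1)=0$, and $\psi(x_1,-1)=\int_{-1}^{1}h_1(x_1,y)\,dy$, which again vanishes under the extra mean-zero hypothesis.
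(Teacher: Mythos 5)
Your argument is correct and essentially identical to the paper's: the same explicit primitive $\psi(x_1,x_2)=-\int_{-1}^{x_2}h_1(x_1,y)\,dy$, the same use of $\dv h=0$ to get $\pay\psi=h_2(x_1,x_2)-h_2(x_1,-1)$, and the same reading-off of the boundary values. One small slip in your side commentary: with this explicit formula, periodicity of $\psi$ in $x_1$ is automatic from periodicity of $h_1$ and does not invoke the boundary hypothesis; that hypothesis enters only to cancel the term $h_2(\cdot,-1)$ in the verification of $\pay\psi=h_2$, exactly as your own computation shows.
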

\begin{proof}
    We assume without loss of generality that $h_2(\cdot, -1)=0$. We define 
    \[
    \psi(x_1, x_2) := -\int_{-1}^{x_2} h_1(x_1, y)\,dy, 
    \]
    so that $\pad \psi(x_1, x_2) = -h_1(x_1, x_2)$. Since \(h\) is divergence-free, we have
   \begin{align*}
       \pay \psi(x_1, x_2) &= -\int_{-1}^{x_2} \pay h_1(x_1, y) \, dy = \int_{-1}^{x_2} \pad h_2(x_1, y) \, dy = h_2(x_1,x_2) - h_2(x_1, -1)=h_2(x_1, x_2).
   \end{align*}
  The periodicity of \(\psi\) in the first variable follows from the same property for \(h_1\). Thus \(\psi\) is a stream function as claimed and satisfies $\psi(\cdot, -1)=0$.  Clearly $\psi(\cdot, 1)=0$ if  and only if \(\int_{-1}^1 h_1(\cdot, y)\,dy = 0\).
\end{proof}
\begin{lemm} \label{lemm:pzt}
    If \(h: \T \times [-1,1] \to \mathbb{R}^2\) is a divergence-free vector field such that \(h_2(x_1,\pm 1) = 0\) on \(\T \times \{\pm 1\}\), then \(\proj_0 h_2 =0\). Consequently,  $\proj_0(\proj k)_2  =0$ for any $k: \T\times [-1, 1]\to \Rr^2$. In particular, we have \(\proj_0 v_2 = 0\) and \(\proj_0 u_2 = 0\).
\end{lemm}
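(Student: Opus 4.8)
The plan is to prove the three assertions in the order stated, each reducing to the previous one. First, for a divergence-free vector field $h$ with $h_2(x_1, \pm 1) = 0$, I would compute $\proj_0 h_2(x_2) = \fint_\T h_2(x_1, x_2)\, dx_1$ and show it vanishes identically in $x_2$. The natural tool is to integrate the divergence-free condition $\p_1 h_1 + \p_2 h_2 = 0$ over $x_1 \in \T$: periodicity kills $\fint_\T \p_1 h_1\, dx_1 = 0$, so $\p_2 \big( \fint_\T h_2\, dx_1 \big) = \fint_\T \p_2 h_2\, dx_1 = 0$, meaning $\proj_0 h_2$ is constant in $x_2$. Evaluating at $x_2 = 1$ (or $x_2 = -1$) and using the boundary condition $h_2(x_1, \pm 1) = 0$ forces this constant to be zero. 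Alternatively, one can invoke Lemma \ref{lemm:stream}: since $h$ is divergence-free with $h_2(\cdot, \pm 1) = 0$ and, in particular, $h_2$ vanishes on one boundary component, $h$ admits a stream function $\psi$ with $h_2 = \p_1 \psi$; then $\proj_0 h_2 = \fint_\T \p_1 \psi\, dx_1 = 0$ by periodicity directly.

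Second, for arbitrary $k: \T \times [-1,1] \to \Rr^2$, I would apply the first assertion to the vector field $h := \proj k$. By Definition \ref{def:Leray}, the Leray projection satisfies $\dv(\proj k) = 0$ and $(\proj k)_2(\cdot, \pm 1) = 0$, so $h = \proj k$ meets exactly the hypotheses of the first part, giving $\proj_0 (\proj k)_2 = 0$.

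Third, the two stated instances follow by specialization. For $v = \proj(\dgr{b}{b})$ as defined in \eqref{auxd}, applying the second assertion with $k = \dgr{b}{b}$ gives $\proj_0 v_2 = 0$. For $u$, recall from \eqref{u:Leray} that $u = \proj(\dgr{b}{b} + \g \pay b + b_2 \g' e_1)$, so again the second assertion with $k = \dgr{b}{b} + \g \pay b + b_2 \g' e_1$ yields $\proj_0 u_2 = 0$; one may also note directly that $u$ is divergence-free with $u_2(\cdot, \pm 1) = 0$ by \eqref{eq:tic}--\eqref{eq:tid}, so the first assertion applies to $u$ itself.

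I do not expect any real obstacle here: the lemma is a bookkeeping consequence of periodicity in $x_1$ together with the boundary condition on the vertical component, and all the structural inputs (existence of stream functions, the defining properties of $\proj$) are already in place. The only point requiring a moment's care is making sure the stream function / integration argument uses only the vanishing of $h_2$ on the boundary and periodicity, not any decay or mean-zero hypothesis on $h_1$, so that it applies verbatim to $\proj k$ for general $k$.
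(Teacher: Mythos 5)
Your proposal is correct and matches the paper's own argument: the paper also invokes Lemma \ref{lemm:stream} to write $h_2 = \p_1\psi$ so that $\proj_0 h_2 = 0$ by periodicity, then applies this to $h = \proj k$ using the defining properties of $\proj$ from Definition \ref{def:Leray}, and finally specializes to $v$ and $u$. Your alternative direct computation (integrating the divergence-free condition over $x_1$) is a valid minor variant but arrives at the same conclusion via the same underlying facts.
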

\begin{proof}
    By Lemma \ref{lemm:stream}, \(h = (-\pad \psi, \pay \psi)\) for some \(\psi: \T \times [-1, 1] \to \mathbb{R}^2\), so  $\proj_0 h_2=\proj_0\p_1\psi =0$. Next, for any $k: \T\times [-1, 1]\to \Rr^2$, we have $\dv \proj k=0$ and $(\proj k)_2(\cdot, \pm 1)=0$ according to Definition \ref{def:Leray}. Thus we can apply the above assertion with $h=\proj k$ to obtain $\proj_0(\proj k)_2=0$. 
 \end{proof}
\begin{lemm}
    \label{lemm:patzpb}
    \(\pat\proj_0b_2 = 0\).
\end{lemm}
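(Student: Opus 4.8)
The plan is to read the evolution of \(\proj_0 b_2\) directly off the second scalar component of equation \eqref{eq:tia}. Since \((e_1)_2=0\), the linear forcing term \(u_2\g' e_1\) contributes nothing in the second slot, so \(b_2\) satisfies
\[
\pat b_2 + u\cdot\na b_2 = b\cdot\na u_2 + \g\,\pay u_2 .
\]
First I would put the two quadratic terms into divergence form using \(\dv u=\dv b=0\), namely \(u\cdot\na b_2=\dv(u\,b_2)=\pay(u_1 b_2)+\pad(u_2 b_2)\) and \(b\cdot\na u_2=\dv(b\,u_2)=\pay(b_1 u_2)+\pad(b_2 u_2)\). (These rewritings are legitimate for the solution produced by Proposition \ref{prop:local}, since \(b\in C([0,T];H^m_\sigma)\) and \(u\in L^2([0,T];H^m)\) with \(m\ge 3\).)

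Next I would apply the horizontal mean \(\proj_0=\fint_{\T}\cdot\,dx_1\) and invoke the algebraic identities of Lemma \ref{prop:alg}. The operator \(\proj_0\pay\) vanishes, so it annihilates every \(x_1\)-derivative term; in particular \(\proj_0(\g\,\pay u_2)=\g\,\proj_0(\pay u_2)=0\) using \(\proj_0(h(x_2)g)=h(x_2)\proj_0 g\). Commuting \(\proj_0\) past \(\pad\) (again by Lemma \ref{prop:alg}), the surviving identity is
\[
\pat\proj_0 b_2 + \pad\proj_0(u_2 b_2) = \pad\proj_0(b_2 u_2).
\]
Since \(u_2 b_2\) and \(b_2 u_2\) are the same scalar function, the two remaining terms are identical and cancel, which gives \(\pat\proj_0 b_2=0\).

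I do not anticipate any real obstacle here: the argument is purely algebraic once the quadratic terms are symmetrized, and the only point requiring care is the bookkeeping with the three projections \(\proj_0,\proj_\perp,\proj\) together with the observation that the transport term \(u\cdot\na b_2\) and the stretching term \(b\cdot\na u_2\) collapse to the \emph{same} expression after the divergence-form rewriting — this exact coincidence is what forces the cancellation. As a consistency check (and an alternative route), one may also note that because \(b(t)\in H^m_\sigma(D)\) is divergence-free with \(b_2(\cdot,\pm1,t)=0\), Lemma \ref{lemm:pzt} applied to \(h=b(t)\) already yields \(\proj_0 b_2(\cdot,t)\equiv 0\) for every \(t\), so in fact \(\proj_0 b_2\) is not merely constant in time but identically zero; nevertheless the direct computation above is the natural one to record here, as the same manipulations feed into the derivation of the evolution equations for \(a\) and \(f\) in the sequel.
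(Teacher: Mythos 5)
Your argument is correct and is essentially the paper's proof, reorganized: the paper first collapses \((\dgr{b}{u}-\dgr{u}{b})_2\) to a single \(\pay\)-derivative, \(\pay(b_1 u_2 - u_1 b_2)\), before applying \(\proj_0\), whereas you apply \(\proj_0\) to each divergence-form piece and observe that the two surviving \(\pad\)-terms are identical; both hinge on the same use of \(\dv u=\dv b=0\). Your consistency check via Lemma \ref{lemm:pzt} is also a valid (and arguably more direct) observation that \(\proj_0 b_2\) is identically zero whenever \(b(t)\in H^m_\sigma\), which the paper instead obtains by combining Lemma \ref{lemm:patzpb} with the hypothesis \(\proj_0 b_{0,2}=0\).
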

\begin{proof}
    Since \(\dv b = \dv u = 0\), we have
    \bq\label{bdu-udb}
    \begin{aligned}
        (\dgr{b}{u} - \dgr{u}{b})_2 &= b_1\pay u_2 + b_2 \pad u_2 - u_1 \pay b_2 - u_2 \pad b_2 \\ 
        &= \pay(b_1u_2) + \pad(b_2u_2) -(\dv b)u_2 - \pay(u_1b_2) - \pad(u_2b_2) +(\dv u)b_2 \\ 
        & = \pay(b_1u_2 - u_1b_2).
    \end{aligned}
    \eq
    Consequently, taking the second component of \eqref{eq:tia} gives 
    \[
\p_tb_2=\pay(b_1u_2 - u_1b_2) + \pay(\g  u_2)
    \]
    which implies $\proj_0(\p_tb_2)=0$.
\end{proof}
In the remainder of this section, we assume  $\proj_0 b_{0,2} = 0$,  so that Lemma \ref{lemm:patzpb} implies 
 \begin{equation}\label{P0b2}
 \proj_0 b_2(t) = 0\quad\forall t\ge 0.
 \end{equation}
 Consequently, $b$ can be decomposed into
\begin{equation}
    \label{eq:bf}
    b = (a + f_1)e_1 + f_2e_2.
\end{equation}
Similarly, using Lemma \ref{lemm:pzt} we can write
\begin{equation}
    \label{eq:uw}
    u = (\proj_0 u_1 + w_1)e_1 + w_2e_2.
\end{equation}
Next, we derive evolution equations satisfied by $a$ and $f$. We begin with an expression of $w$ in terms of $a$ and $f$. 
\begin{lemm} \label{lemm:waf}
    We have
    \begin{equation}\label{eq:wf}
    w = \proj_\perp \proj (\dgr{f}{f}) + \proj (f_2\pad a\,e_1 + a\pay f) + \proj (\g \pay f + \g' f_2\, e_1).
    \end{equation}
\end{lemm}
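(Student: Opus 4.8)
The plan is to reduce everything to the algebraic identities of Lemma~\ref{prop:alg} and a term-by-term expansion. Start from the definitions $w=\proj_\perp u$ in \eqref{auxc} and $u=\proj\big(\dgr{b}{b}+\g\pay b+b_2\g'e_1\big)$ in \eqref{u:Leray}, and use the commutation identity $\proj_\perp\proj=\proj\proj_\perp$ from \eqref{pzp} to write
\[
w=\proj_\perp\proj\big(\dgr{b}{b}+\g\pay b+b_2\g'e_1\big)=\proj\,\proj_\perp\big(\dgr{b}{b}+\g\pay b+b_2\g'e_1\big).
\]
Thus it suffices to compute $\proj_\perp$ applied to the vector field $N:=\dgr{b}{b}+\g\pay b+b_2\g'e_1$, then apply $\proj$.

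Next I would substitute the decomposition $b=ae_1+f$, which is legitimate by \eqref{P0b2} and \eqref{eq:bf} (these give $\proj_0 b=ae_1$ and $b_2=f_2$), and expand the three pieces of $N$. Since $a=a(x_2)$ depends on $x_2$ only, $\pay a=0$, so the background–background interaction $ae_1\cdot\nabla(ae_1)$ vanishes, while $f\cdot\nabla(ae_1)=f_2\pad a\,e_1$, $ae_1\cdot\nabla f=a\pay f$, $\g\pay(ae_1)=0$, and $b_2\g'e_1=f_2\g'e_1$. Collecting terms yields
\[
N=\dgr{f}{f}+a\pay f+f_2\pad a\,e_1+\g\pay f+\g' f_2\,e_1.
\]

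Then I would apply $\proj_\perp$ to $N$ term by term. The point is that every summand other than $\dgr{f}{f}$ is already $x_1$-mean free, hence fixed by $\proj_\perp$: by Lemma~\ref{prop:alg} one has $\proj_0\pay f=0$ and $\proj_0 f_2=\proj_0\proj_\perp b_2=0$, together with $\proj_0\big(h(x_2)g\big)=h(x_2)\proj_0 g$ applied to $h\in\{a,\,\pad a,\,\g,\,\g'\}$; therefore $\proj_0(a\pay f)=\proj_0(f_2\pad a\,e_1)=\proj_0(\g\pay f)=\proj_0(\g' f_2\,e_1)=0$. Consequently $\proj_\perp N=\proj_\perp(\dgr{f}{f})+a\pay f+f_2\pad a\,e_1+\g\pay f+\g' f_2\,e_1$, and applying $\proj$, using linearity and $\proj\,\proj_\perp=\proj_\perp\proj$ once more on the first term, gives exactly \eqref{eq:wf}. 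There is no genuine obstacle in this argument; the only point requiring care is tracking which factors are functions of $x_2$ alone, so that $\proj_0$ passes through them and annihilates the corresponding terms, leaving $\dgr{f}{f}$ as the sole term not invariant under $\proj_\perp$.
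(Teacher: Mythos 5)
Your proof is correct and follows essentially the same route as the paper: commute $\proj_\perp$ with $\proj$, substitute $b=ae_1+f$, and observe that every resulting term except $\dgr{f}{f}$ is already $x_1$-mean free and hence fixed by $\proj_\perp$. The only cosmetic difference is that you expand all three pieces of $b\cdot\nabla b+\g\pay b+b_2\g' e_1$ before applying $\proj_\perp$, whereas the paper applies $\proj_\perp$ to the linear-in-$b$ pieces directly via Lemma~\ref{prop:alg} and only expands the quadratic term.
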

\begin{proof}
 Since $\proj_\perp$ commutes with $\proj$ in view of \eqref{pzp}, we have
    \[
    w = \proj_\perp u = \proj_\perp \proj (\dgr{b}{b} + \g \pay b + b_2 \g' e_1)
    = \proj \proj_\perp(\dgr{b}{b} + \g \pay b + b_2 \g' e_1).
    \]
    Using Lemma \ref{prop:alg}, we find    \(
    \proj_\perp (\g \pay b) = \g \pay f
    \)
    and 
    \(
    \proj_\perp (b_2\g' e_1)  = \g' f_2e_1
    \). Next, we substitute  \(b = f + a(x_2)e_1\) to obtain
    \bq\label{eq:bnab}
    \begin{aligned}
    \dgr{b}{b} &= \dgr{(f + ae_1)}{(f + ae_1)} = \dgr{f}{(f + ae_1)} + \dgr{ae_1}{(f + ae_1)} \\ 
    &= \dgr{f}{f} + f_2\pad ae_1 + a \pay f.
    \end{aligned}
    \eq
    Since \(a\) is a function of \(x_2\) only, \(\proj_\perp (f_2 \pad a e_1 + a \pay f) = f_2\pad a e_1 + a \pay f\). Putting the above considerations together leads to \eqref{eq:wf}.
\end{proof}
\eqref{eq:wf} shows that $w$ is comprised of   linear and quadratic terms of $f$. 
\begin{lemm}[The \(a\) evolution]
    \(a\) satisfies
    \begin{equation}
    \pat a = N'(f,w) := \pad \proj_0 (f_2w_1 - f_1w_2).\label{eq:aevo}
    \end{equation}
\end{lemm}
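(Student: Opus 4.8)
The plan is to obtain \eqref{eq:aevo} by a direct computation: since $\proj_0$ is time–independent, $\pat a = \proj_0 \pat b_1$, so it suffices to evaluate $\proj_0$ applied to the $e_1$–component of \eqref{eq:tia} and then simplify using the algebraic identities of Lemma \ref{prop:alg} together with the vanishing statements $\proj_0 u_2 = 0$ (Lemma \ref{lemm:pzt}), $\proj_0 f_2 = 0$ and $\proj_0 w_2 = 0$ (consequences of $\proj_0\proj_\perp = 0$).

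Concretely, I would first take the $e_1$–component of \eqref{eq:tia},
\[
\pat b_1 + u\cdot\nabla b_1 + u_2\g' = b\cdot\nabla u_1 + \g\pay u_1,
\]
and use $\dv u = \dv b = 0$ to rewrite the transport and stretching terms in divergence form: $u\cdot\nabla b_1 = \pay(u_1 b_1) + \pad(u_2 b_1)$ and $b\cdot\nabla u_1 = \pay(b_1 u_1) + \pad(b_2 u_1)$. The two copies of $\pay(u_1 b_1)$ cancel, and since $\g = \g(x_2)$ we have $\g\pay u_1 = \pay(\g u_1)$, leaving
\[
\pat b_1 + \pad(u_2 b_1) + u_2\g' = \pad(b_2 u_1) + \pay(\g u_1).
\]
Now apply $\proj_0$: the term $\pay(\g u_1)$ drops because $\proj_0\pay = 0$; one has $\proj_0\pad = \pad\proj_0$; and $\proj_0(u_2\g') = \g'\proj_0 u_2 = 0$ by Lemma \ref{lemm:pzt}. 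This yields
\[
\pat a = \pad\proj_0\!\left(b_2 u_1 - b_1 u_2\right).
\]

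Finally I substitute the decompositions $b = (a + f_1)e_1 + f_2 e_2$ from \eqref{eq:bf} and $u = (\proj_0 u_1 + w_1)e_1 + w_2 e_2$ from \eqref{eq:uw}, so that
\[
b_2 u_1 - b_1 u_2 = f_2\proj_0 u_1 + f_2 w_1 - a\,w_2 - f_1 w_2 .
\]
Since $\proj_0 u_1$ and $a$ depend only on $x_2$, the identity $\proj_0(h(x_2)g) = h(x_2)\proj_0 g$ gives $\proj_0(f_2\proj_0 u_1) = (\proj_0 u_1)\proj_0 f_2 = 0$ and $\proj_0(a\,w_2) = a\,\proj_0 w_2 = 0$, using $\proj_0 f_2 = \proj_0\proj_\perp b_2 = 0$ and $\proj_0 w_2 = \proj_0\proj_\perp u_2 = 0$. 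Hence $\proj_0(b_2 u_1 - b_1 u_2) = \proj_0(f_2 w_1 - f_1 w_2)$ and therefore $\pat a = \pad\proj_0(f_2 w_1 - f_1 w_2) = N'(f,w)$, as claimed. The argument is purely algebraic and carries no analytic difficulty; the only point demanding care is the bookkeeping of which projection annihilates which term, all of which is supplied by Lemmas \ref{prop:alg} and \ref{lemm:pzt}.
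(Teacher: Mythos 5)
Your proof is correct and follows essentially the same route as the paper: both take the $e_1$-component of \eqref{eq:tia}, use $\dv u = \dv b = 0$ to write $(\dgr{b}{u} - \dgr{u}{b})_1 = \pad(b_2 u_1 - u_2 b_1)$, apply $\proj_0$ together with $\proj_0 u_2 = 0$ from Lemma \ref{lemm:pzt} and $\proj_0\pay = 0$, and then substitute the decompositions \eqref{eq:bf}, \eqref{eq:uw} and the identities $\proj_0 f_2 = \proj_0 w_2 = 0$ to reduce to $\proj_0(f_2 w_1 - f_1 w_2)$. The only difference is that you carry out the divergence-form rewriting explicitly, whereas the paper refers back to the analogous computation \eqref{bdu-udb}.
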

\begin{proof}
    We apply \(\proj_0\) to the first component of \eqref{eq:tia} and use  Lemmas \ref{prop:alg} and \ref{lemm:pzt} to have
    \[
\p_ta=\proj_0\big((\dgr{b}{u} - \dgr{u}{b})_1\big)- \g' \proj_0(u_2)=\proj_0\big((\dgr{b}{u} - \dgr{u}{b})_1\big).
    \]
  A calculation similar to \eqref{bdu-udb} gives 
  \[
 (\dgr{b}{u} - \dgr{u}{b})_1= \pad(b_2u_1 - u_2b_1).
  \]
   We recall from Lemma \ref{prop:alg}  that $\proj_0$ commutes with $\p_2$, and  $\proj_0(g\proj_0h)=\proj_0h\proj_0g$. Then the decompositions  \eqref{eq:bf} and \eqref{eq:uw} imply 
    \begin{align*}
         \proj_0 (b_2u_1 - u_2b_1) &= \proj_0\left(f_2 (\proj_0 u_1 + w_1) - w_2 (a + f_1)\right) \\ 
        & =  \proj_0u_1 \proj_0f_2 +  \proj_0(f_2w_1)
        -  a\proj_0w_2 -  \proj_0(w_2f_1) \\ 
        & =  \proj_0(f_2w_1) - \proj_0(w_2f_1),
    \end{align*}
  where we have used   $\proj_0 w=\proj_0 f=0$. This concludes the proof of \eqref{eq:aevo}.
\end{proof}
In view of \eqref{eq:aevo} and \eqref{eq:wf}, $N'$ is comprised of   quadratic and cubic terms of $f$. 
\begin{lemm}[The \(f\) evolution]
Let \(p_L\) denote the linear part of the pressure $p$ given by \eqref{eq:pti}: 
\begin{subequations}
\label{eq:pl}
\begin{align}
    \Delta p_L &= -\dv (\g \pay b + b_2 \g' e_1) = -2\g' \pay b_2 = -2\g' \pay f_2, \\ 
    \pad p_L(x_1, \pm 1) &= -\left(\g \pay b+  b_2 \g' e_1\right)_2(x_1, \pm 1) = 0.
\end{align}
\end{subequations}
Then  \(f\) satisfies
    \begin{equation} \label{eq:fevo}
    \pat f = L_af + N(f,w),
    \end{equation}
    where 
    \bq
    \label{def:L}
    \begin{split}
    L_af &=\g^2 \pay^2 f + \g \pay \nabla p_L - \g' \pad p_L e_1 \\
    &\quad +a \pay \proj ( f_2\pad ae_1 + a \pay f + \g\pay f + \g'f_2 e_1) \\ 
    &\quad  - \left(\proj ( f_2\pad ae_1 + a \pay f + \g\pay f + \g' f_2 e_1)\right)_2 \pad a e_1\\
    &\quad + \g \pay \proj(f_2 \pad a e_1 + a \pay f) 
    - \g' \left(\proj(f_2\pad a e_1 + a\pay f)\right)_2 e_1
    \end{split}
    \eq
    is linear with respect to \(f\), and 
    \bq
    \label{def:N}
    \begin{split}
    N(f,w) &= \proj_\perp (\dgr{f}{w}) + f_2 \pad \proj_0(f_2\pad f_1)e_1 \\ 
    &\quad - \proj_\perp(\dgr{w}{f}) - \proj_0(f_2 \pad f_1) \pay f \\ 
    &\quad + a \pay\proj(\dgr{f}{f}) - \big( \proj_\perp\proj(\dgr{f}{f}) \big)_2 \pad a e_1 \\ 
    &\quad + \g \pay \proj(\dgr{f}{f}) - \g' \big(\proj(\dgr{f}{f})\big)_2e_1
    \end{split}
    \eq
    is nonlinear with respect to $f$.
\end{lemm}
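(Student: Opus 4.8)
The plan is to obtain \eqref{eq:fevo} by applying the projection $\proj_\perp$ directly to the $b$-equation \eqref{eq:tia} and then separating the resulting terms into a part linear in $f$ and a part at least quadratic in $f$. Since $\proj_\perp$ commutes with $\pat$ and, by Lemma \ref{prop:alg}, with $\pay$, rewriting \eqref{eq:tia} as $\pat b = \dgr{b}{u} - \dgr{u}{b} + \g\pay u - u_2\g'e_1$ and applying $\proj_\perp$ gives
\[
\pat f = \proj_\perp(\dgr{b}{u}) - \proj_\perp(\dgr{u}{b}) + \proj_\perp(\g\pay u) - \proj_\perp(u_2\g'e_1).
\]
Into the right-hand side I substitute $b = f + a(x_2)e_1$ and, using Lemma \ref{lemm:pzt}, $u = (\proj_0 u_1)e_1 + w$ with $w = \proj_\perp u$. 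A preliminary computation identifies the scalar $\proj_0 u_1$: from $u = \proj(\dgr{b}{b} + \g\pay b + b_2\g'e_1)$ one has $u_1 = \dgr{b}{b_1} + \g\pay b_1 + b_2\g' + \pay p$, and $\proj_0$ kills $\g\pay b_1$ and $\pay p$ by $\proj_0\pay = 0$ and $b_2\g'$ by $\proj_0 b_2 = 0$, while $\proj_0(\dgr{b}{b_1}) = \proj_0\big(\pay(\tfrac12 b_1^2) + b_2\pad b_1\big) = \proj_0(f_2\pad f_1)$ after using $\proj_0\pay = 0$, $b_2 = f_2$, $b_1 = a + f_1$ and $\proj_0 f_2 = 0$; thus $\proj_0 u_1 = \proj_0(f_2\pad f_1)$.

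Next I expand the four terms. Using $\pay a = \pay\proj_0 u_1 = 0$ and $w\cdot\na a = w_2\pad a$, the convective part becomes
\[
\dgr{b}{u} - \dgr{u}{b} = \dgr{f}{w} - \dgr{w}{f} + f_2\pad(\proj_0 u_1)e_1 - (\proj_0 u_1)\pay f - w_2\pad a\,e_1 + a\pay w.
\]
Each of $f_2\pad(\proj_0 u_1)e_1$, $(\proj_0 u_1)\pay f$, $w_2\pad a\,e_1$, $a\pay w$ has vanishing $x_1$-average (by $\proj_0 f_2 = 0$, $\proj_0\pay = 0$, $\proj_0 w = 0$), so $\proj_\perp$ acts as the identity on them; combined with $\proj_0 u_1 = \proj_0(f_2\pad f_1)$ this already yields the terms $\proj_\perp(\dgr{f}{w})$, $-\proj_\perp(\dgr{w}{f})$, $f_2\pad\proj_0(f_2\pad f_1)e_1$ and $-\proj_0(f_2\pad f_1)\pay f$ of $N$ in \eqref{def:N}, leaving $-w_2\pad a\,e_1 + a\pay w$ to be processed. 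For the linear terms, $\proj_0(\g\pay u) = 0$ and $\proj_0(u_2\g'e_1) = 0$ (the latter because $\proj_0 u_2 = 0$), so $\proj_\perp(\g\pay u) = \g\pay u = \g\pay w$ and $\proj_\perp(u_2\g'e_1) = u_2\g'e_1 = \g'w_2\,e_1$.

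It then remains to substitute the expression \eqref{eq:wf} for $w$ together with the identity $\proj(\g\pay f + \g'f_2 e_1) = \g\pay f + \g'f_2 e_1 + \na p_L$, where $p_L$ is the linear pressure of \eqref{eq:pl}; this identity follows from Definition \ref{def:Leray} once one checks $\dv(\g\pay f + \g'f_2 e_1) = 2\g'\pay f_2$ (using $\dv f = 0$) and that $(\g\pay f + \g'f_2 e_1)_2 = \g\pay f_2$ vanishes on $x_2 = \pm 1$ since $f_2(\cdot,\pm 1) = 0$. After this, $a\pay w$ and $-\g'w_2 e_1$ each split into a piece linear in $f$ and a piece quadratic in $f$; inside $\g\pay w$ the contribution $\g\pay(\g'f_2 e_1) = \g\g'\pay f_2\,e_1$ cancels against the $-\g\g'\pay f_2\,e_1$ coming from $\g\pay f_2$ inside $-\g'w_2 e_1$, and what survives from these two terms is exactly $\g^2\pay^2 f + \g\pay\na p_L - \g'\pad p_L e_1$ together with $\g\pay\proj(f_2\pad a e_1 + a\pay f) - \g'(\proj(f_2\pad a e_1 + a\pay f))_2 e_1$, while $a\pay w$ contributes its linear part $a\pay\proj(f_2\pad a e_1 + a\pay f + \g\pay f + \g'f_2 e_1)$ to $L_a f$. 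Collecting all linear-in-$f$ terms reproduces $L_a f$ of \eqref{def:L}, and the remaining terms reproduce $N(f,w)$ of \eqref{def:N}; the only point that is not automatic is that $a\pay\proj_\perp\proj(\dgr{f}{f}) = a\pay\proj(\dgr{f}{f})$, $\g\pay\proj_\perp\proj(\dgr{f}{f}) = \g\pay\proj(\dgr{f}{f})$ and $(\proj_\perp\proj(\dgr{f}{f}))_2 = (\proj(\dgr{f}{f}))_2$, which hold because $\proj_0\proj(\dgr{f}{f}) = \proj\proj_0(\dgr{f}{f})$ is a vector field of the form $c(x_2)e_1$ — depending on $x_2$ alone and having no $e_2$-component — hence annihilated by $\pay$ and invisible in the second component. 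I expect the main difficulty to be purely organizational: ensuring that every occurrence of $\proj_\perp$ is either dropped or commuted past $\proj$ correctly, that the bookkeeping of the pressure $p_L$ is consistent, and that the lengthy expressions \eqref{def:L} and \eqref{def:N} emerge with all signs and cancellations exactly as written rather than in a merely equivalent rearranged form.
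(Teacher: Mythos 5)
Your proposal is correct and follows essentially the same approach as the paper: apply $\proj_\perp$ to \eqref{eq:tia}, identify $\proj_0 u_1 = \proj_0(f_2\pad f_1)$, decompose $b = f + ae_1$ and $u = w + \proj_0 u_1\,e_1$, substitute the expression \eqref{eq:wf} for $w$, and separate linear from nonlinear terms in $f$. The only cosmetic difference is that the paper first rewrites $u = v + \g\pay b + b_2\g'e_1 + \na p_L$ with $v=\proj(b\cdot\na b)$ to peel off $\g^2\pay^2 f + \g\pay\na p_L - \g'\pad p_L e_1$ at the outset, whereas you recover those terms later by inserting the identity $\proj(\g\pay f + \g'f_2 e_1) = \g\pay f + \g'f_2 e_1 + \na p_L$ inside $\g\pay w - \g' w_2 e_1$; the two bookkeepings are algebraically interchangeable, and your verifications of the cancellation $\g\g'\pay f_2 e_1$, of $\proj_0 u_1$, and of the redundancy of $\proj_\perp$ in $\proj_\perp\proj(\dgr{f}{f})$ when hit by $\pay$ or by taking the second component are all sound.
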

\begin{proof}
Applying \(\proj_\perp\) to \eqref{eq:tia}, we obtain 
\begin{equation} \label{eq:patloc}
    \pat f = \g\pay u - u_2\g' e_1 + \proj_\perp (\dgr{b}{u} - \dgr{u}{b}),
\end{equation}
where we have  used Lemmas \ref{lemm:pzt} and  \ref{prop:alg}.  Since $v=\proj (b\cdot \na b)$,  we deduce from \eqref{eq:tib} that 
\bq
u=v+\gamma \p_1 b+b_2\gamma'e_1+\na p_L. 
\eq
Substituting this into the first two terms on the right-hand side of \eqref{eq:patloc}, we find
\begin{align}\label{dtf:1}
    \pat f = &\g^2 \pay^2 f + \g \pay \nabla p_L - \g' \pad p_L e_1 \nonumber \\ 
    &+ \proj_\perp (\dgr{b}{u}) - \proj_\perp (\dgr{u}{b}) + \g \pay v - \g' v_2 e_1.
\end{align}
All the terms in the first line of \eqref{dtf:1} are linear with respect to \(f\). We now proceed to expand the second line and separate the linear and the nonlinear parts.  
From \eqref{eq:bf} we have
    \bq\label{terms:gammav}
    \begin{aligned}
     b\cdot \na b =\dgr{f}{f}+ f_2 \pad a e_1 + a \pay f.
        \end{aligned}
    \eq
    Inserting this into $v=\proj(b\cdot \na b)$ yields
    \begin{align}\label{eq:vinf}
        \g \pay v &= \g \pay \proj(\dgr{f}{f}) + \g \pay \proj(f_2 \pad a e_1 + a \pay f), \nonumber\\
    \g' v_2e_1 &= \g' \big(\proj(\dgr{f}{f})\big)_2e_1 + \g' \big(\proj(f_2\pad a e_1 + a\pay f)\big)_2 e_1.
    \end{align}
Equation \eqref{eq:uw} gives
    \begin{align*}
    \proj_\perp(\dgr{b}{u}) &= \proj_\perp \big(\dgr{(f + ae_1)}{(w + \proj_0u_1 e_1)}\big)  \\ 
    &= \proj_\perp(\dgr{f}{w}) + a \pay w + f_2\pad \proj_0 u_1 e_1 .
    \end{align*}
    To compute \(\proj_0 u_1\) we recall from \eqref{eq:tib} that $u_1=(\dgr{b}{b})_1 + \g \partial_1 b_1  + b_2 \g' + \p_1 p$, where $\proj_0(b_2\gamma')=\gamma'\proj_0b_2=0$ by \eqref{P0b2}. Hence, invoking \eqref{terms:gammav} yields 
    \bq\label{P0:u1}
    \begin{aligned}
    \proj_0 u_1 &= \proj_0\big((\dgr{b}{b})_1\big) \\ 
    &= \proj_0 (f_1 \pay f_1) + \proj_0(a \pay f_1) + \proj_0 (f_2 \pad f_1) + \proj_0(f_2 \pad a)  \\ 
    &= \proj_0 \big(\p_1\frac{f^2_1}{2}\big) + a\proj_0( \pay f_1) + \proj_0 (f_2 \pad f_1) + \pad a\proj_0(f_2 )  \\ 
    &= \proj_0(f_2 \pad f_1).
    \end{aligned}
    \eq
    Thus we obtain
    \begin{equation} \label{dtf:2}
    \proj_\perp (\dgr{b}{u}) = \proj_\perp (\dgr{f}{w}) + a \pay w + f_2 \pad \proj_0(f_2\pad f_1)e_1.
    \end{equation}
    On the other hand, we have
    \bq\label{dtf:3}
    \begin{aligned}
    \proj_\perp(\dgr{u}{b}) &= \proj_\perp \big(\dgr{(w + \proj_0u_1 e_1)}{(f + ae_1)}\big) \\ 
    &= \proj_\perp\big(\dgr{w}{f} + \proj_0 u_1 \pay f + w_2 \pad ae_1\big) \\
    &= \proj_\perp(\dgr{w}{f}) + \proj_0 u_1\pay f +  \pad a\proj_\perp w_2e_1 \\
    &= \proj_\perp(\dgr{w}{f}) + \proj_0(f_2 \pad f_1) \pay f +   \pad a w_2e_1,
    \end{aligned}
    \eq
    where we have used \eqref{P0:u1} for $\proj_0u_1$. 
    
    From the expression   \eqref{eq:wf} of $w$, the linear and nonlinear parts of $a\pay w $ and $w_2\pad a e_1 $ are given by
    \bq\label{eqs:aw}
    \begin{aligned}
    a\pay w & = a \pay \proj ( f_2\pad ae_1 + a \pay f + \g\pay f + \g'f_2 e_1)  + a \pay\proj(\dgr{f}{f}), \\ 
    w_2\pad a e_1 &= \left(\proj ( f_2\pad ae_1 + a \pay f + \g\pay f + \g' f_2 e_1)\right)_2 \pad a e_1 + \left( \proj_\perp\proj(\dgr{f}{f}) \right)_2 \pad a e_1.     \end{aligned}
    \eq
Combining  \eqref{dtf:1}, \eqref{dtf:2}, \eqref{dtf:3}, \eqref{eqs:aw}, and \eqref{eq:vinf},  and separating linear and nonlinear terms with respect to $f$, we arrive at the evolution equation \eqref{eq:fevo} with the linear part \eqref{def:L} and the nonlinearity \eqref{def:N}.
\end{proof}
In view of \eqref{eq:wf} and \eqref{def:N} , $N$ is comprised of   quadratic and cubic terms of $f$.
\subsection{Semigroup estimates for $L_a$}
 In this section, we  view $L_a$ given by \eqref{def:L} as an independent operator, where $a=a(x_2, t): (-1, 1)\times [0, T]\to \Rr$ is a sufficiently smooth given function. Moreover, the function $p_L$ in  $L_a f$ is defined by
 \bq\label{pL:La}\begin{aligned}
    \Delta p_L &= -\dv (\g \pay f + f_2 \g' e_1), \\ 
    \pad p_L(x_1, \pm 1) &= -\left(\g \pay f+  f_2 \g' e_1\right)_2(x_1, \pm 1).
\end{aligned}
\eq
This definition is consistent with  \eqref{eq:pl} since $b=(a(x_2) + f_1)e_1 + f_2e_2$ there (in view of \eqref{eq:bf})  but it {\it does not} require the conditions that $\dv f=0$ and $f_2(\cdot, \pm 1)=0$. Since we will eventually solve \eqref{eq:fevo} for a vector field $f$ which satisfies  
 \[
 \dv f=0,\quad\proj_0 f=0,\quad f_2(\cdot, \pm 1)=0,
 \]
  we first verify that the linear operator $L_a$ preserves these properties. 
    \begin{lemm} \label{lemproj}
        For any    vector field \(f\in H^3(\T \times (-1,1))\), we have 
    \begin{equation}
       \proj_0 L_a(f) = 0,\quad   \dv L_af=0,\quad (L_af)_2(\cdot, \pm 1)= 2\g^2 \pay^2 f_2(\cdot, \pm 1). \label{eq:projlf}
    \end{equation}
    \end{lemm}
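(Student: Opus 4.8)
The plan is to verify each of the three assertions in \eqref{eq:projlf} by inspecting the seven groups of terms making up $L_a f$ in \eqref{def:L}, using the algebraic relations of Lemma \ref{prop:alg}, the mapping properties of $\proj$ from Definition \ref{def:Leray}, and Lemma \ref{lemm:pzt}. Throughout, the key structural observation is that every term of $L_a f$ is either (i) of the form $\proj(\cdots)$, (ii) of the form $\g\pay(\text{something})$ or $\pay(\text{something})$, (iii) of the form $h(x_2)\,\pad a\, e_1$, or (iv) the leading term $\g^2\pay^2 f$. I will treat the three claims in turn.

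For $\proj_0 L_a f = 0$: I will apply $\proj_0$ to \eqref{def:L} term by term. The term $\g^2\pay^2 f$ contributes $\g^2\pay^2 \proj_0 f$, but only through its $e_1$-component, since for the $e_2$-component I need $\proj_0(\g^2\pay^2 f_2)$. Here I must be careful — this is not obviously zero from $\proj_0 f = 0$ alone; rather, $\proj_0(\g^2 \pay^2 f_2) = \g^2 \pay^2 \proj_0 f_2$ by the last identity of Lemma \ref{prop:alg}, and since the claim $(L_af)_2(\cdot,\pm1) = 2\g^2\pay^2 f_2(\cdot,\pm1)$ will rely on $f_2$ itself (not its $x_1$-average), I should \emph{not} assume $\proj_0 f = 0$ when proving the boundary identity, only when proving $\proj_0 L_a f = 0$. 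For the latter I do assume $\proj_0 f = 0$: then $\g^2\pay^2\proj_0 f = 0$; every $\proj_0 \pay(\cdots)$ vanishes by $\proj_0\pay = 0$ (so the $\g\pay\na p_L$, $\g\pay\proj(\cdots)$ terms die, using also that $\proj_0$ commutes with $\g\pay$ via $\proj_0(\g(x_2)\pay h) = \g\proj_0\pay h = 0$); the terms $-\g'\pad p_L e_1$, $-(\proj(\cdots))_2\pad a e_1$, $-\g'(\proj(\cdots))_2 e_1$ all have the form $h(x_2)\cdot(\text{quantity})\cdot e_1$ where the quantity is either $\pad p_L$, or a second component of a Leray projection — but $\proj_0 \pad p_L = \pad\proj_0 p_L$ and one checks $\proj_0 p_L = 0$ from \eqref{pL:La} (the right-hand side $-\dv(\g\pay f + f_2\g'e_1)$ has zero $\proj_0$ after using $\proj_0\dv = \pad\proj_0(\cdot)_2$ and $\proj_0 f_2 = 0$; similarly the Neumann data), and $\proj_0((\proj k)_2) = 0$ by Lemma \ref{lemm:pzt}. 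The remaining two terms $a\pay\proj(\cdots)$ and $\g\pay\proj(\cdots)$ again vanish under $\proj_0$ since $\proj_0\pay = 0$ and $\proj_0(a(x_2)\pay h) = a\proj_0\pay h = 0$.

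For $\dv L_a f = 0$: I compute the divergence of each group. Terms of the form $\proj(\cdots)$ and $\g\pay\proj(\cdots)$ are divergence-free because $\dv\proj = 0$ and $\dv$ commutes with $\pay$ (and $\dv(\g(x_2)\pay h) = \g\pay\dv h = 0$ when $\dv h = 0$, which holds for $h = \proj(\cdots)$). The leading term: $\dv(\g^2\pay^2 f) = \g^2\pay^2\dv f + (\text{terms from }\g^2\text{ depending on }x_2\text{ hitting }\pay^2 f_2)$; I expect cancellation against the contributions of $\g\pay\na p_L - \g'\pad p_L e_1$ after invoking the defining PDE $\Delta p_L = -\dv(\g\pay f + f_2\g' e_1)$ — this is the one computational point requiring genuine care, and I will carry it out by writing $\dv(\g\pay\na p_L - \g'\pad p_L e_1)$ explicitly and matching it with $\dv(\g^2\pay^2 f)$ modulo $\dv f$. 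The terms of the form $h(x_2)\pad a\, e_1$ contribute $\dv(h\,\pad a\, e_1) = \pay(h\,\pad a) = h\,\pad a\,\pay(1) = 0$ since $h\pad a$ depends only on $x_2$ — wait, more precisely the coefficient in those terms is $(\proj(\cdots))_2$ which \emph{does} depend on $x_1$, but it is multiplied by $\pad a\, e_1$, so its divergence is $\pay\big((\proj(\cdots))_2\,\pad a\big) = \pad a\,\pay(\proj(\cdots))_2$, which is not obviously zero; however this is paired in \eqref{def:L} with $a\pay\proj(\cdots)$ whose divergence is $a\,\pay\dv\proj(\cdots) = 0$ — hmm, so I need the $h\pad a\,e_1$ divergences to cancel among themselves or vanish. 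Reexamining: actually $\dv f = 0$ will be used to rewrite, and the claim should come out; this cancellation is the \textbf{main obstacle} and I will handle it by grouping the $\pad a$-terms with the terms they arose from in the derivation of \eqref{eq:fevo}, where the original object $\proj_\perp(b\cdot\na u - u\cdot\na b)$ is manifestly a difference of divergence-free-related quantities.

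For the boundary trace $(L_a f)_2(\cdot,\pm1) = 2\g^2\pay^2 f_2(\cdot,\pm1)$: I take the $e_2$-component of \eqref{def:L} and restrict to $x_2 = \pm1$. All terms of the form $(\text{something})\,e_1$ contribute nothing to the $e_2$-component. The term $\g^2\pay^2 f$ contributes $\g^2\pay^2 f_2$. Each remaining term is the $e_2$-component of either $\g\pay\na p_L$, or $\g\pay\proj(\cdots)$, or $a\pay\proj(\cdots)$: for these I write, e.g., $(\g\pay\na p_L)_2 = \g\pay\pad p_L = \g\,\pad(\pay p_L)$, and at $x_2 = \pm1$ I use $\pad p_L(\cdot,\pm1) = 0$ from \eqref{pL:La} — but I need $\pay$ of the boundary values, i.e. $\pay\big(\pad p_L(x_1,\pm1)\big) = \pay 0 = 0$, so this term vanishes on the boundary. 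Similarly $(\g\pay\proj(\cdots))_2(\cdot,\pm1) = \g\,\pay\big((\proj(\cdots))_2(\cdot,\pm1)\big) = \g\,\pay 0 = 0$ since $(\proj k)_2(\cdot,\pm1) = 0$ for every $k$ by Definition \ref{def:Leray}; and likewise for $(a\pay\proj(\cdots))_2(\cdot,\pm1)$. The factor $2$ is the subtle point: it must come from $\g^2\pay^2 f_2$ interacting with the $p_L$ contribution — indeed in the derivation \eqref{eq:pl} one sees $\Delta p_L = -2\g'\pay f_2$, so I expect that on the boundary the combination $\g^2\pay^2 f_2 + (\g\pay\na p_L)_2$ does \emph{not} simply reduce to $\g^2\pay^2 f_2$; instead I will need to compute $(\g\pay\na p_L - \g'\pad p_L e_1)_2 = \g\pay\pad p_L$ near the boundary more carefully, using $\pad p_L|_{\pm1} = 0$ together with the \emph{interior} equation $\Delta p_L = \pad^2 p_L + \pay^2 p_L = -2\g'\pay f_2$ evaluated at the boundary (where $\pay^2 p_L|_{\pm1} = \pay^2(\pad p_L\text{-free part})$... ). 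I will reconcile the factor by substituting $\pay^2 f_2$ on $x_2 = \pm1$: since $f_2(\cdot,\pm1)$ need not vanish here (we are \emph{not} assuming $f\in H^n_\sigma$), the trace $2\g^2\pay^2 f_2(\cdot,\pm1)$ genuinely survives, and the factor $2$ will emerge once I correctly account for the second-order normal derivative of $p_L$ via its interior equation — this is the delicate bookkeeping step of the proof.
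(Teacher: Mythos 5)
Your proposal correctly identifies the structural ingredients, but twice you explicitly flag a cancellation you cannot resolve and leave it as a promissory note; those are genuine gaps, not bookkeeping that can be deferred.

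For $\dv L_a f = 0$, you observe that $\dv\bigl((\proj(\cdots))_2\,\pad a\,e_1\bigr) = \pad a\,\pay(\proj(\cdots))_2$ ``is not obviously zero'' and call this ``the main obstacle,'' proposing to handle it by ``grouping the $\pad a$-terms with the terms they arose from in the derivation.'' The missing observation is elementary and does not require revisiting the derivation of \eqref{eq:fevo}: each such term is paired in \eqref{def:L} with $a\pay\proj(\cdots)$ or $\g\pay\proj(\cdots)$ applied to the \emph{same} argument, and for any $k$ and any scalar $h(x_2)$,
\[
\dv\bigl(h(x_2)\pay\proj k - h'(x_2)(\proj k)_2\,e_1\bigr)
= h\pay\dv\proj k + h'\pay(\proj k)_2 - h'\pay(\proj k)_2 = 0,
\]
since $\dv\proj k = 0$. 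This disposes of the second, third, and fourth lines of \eqref{def:L} in one stroke (with $h = a$ and $h = \g$ respectively). For the first line you also hedge with ``modulo $\dv f$''; there is no residue. A direct computation gives
\[
\dv\bigl(\g^2\pay^2 f + \g\pay\na p_L - \g'\pad p_L\,e_1\bigr)
= \g^2\pay^2\dv f + 2\g\g'\pay^2 f_2 + \g\pay\Delta p_L,
\]
and substituting $\Delta p_L = -\dv(\g\pay f + f_2\g' e_1) = -\g\pay\dv f - 2\g'\pay f_2$ cancels both surviving terms exactly, for arbitrary $f$.

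For the boundary trace, your plan invokes $\pad p_L(\cdot,\pm1)=0$; this is the condition from \eqref{eq:pl}, which holds only when $b_2(\cdot,\pm1)=0$. In the lemma, $p_L$ is defined by \eqref{pL:La}, where $\pad p_L(\cdot,\pm1) = -(\g\pay f_2)(\cdot,\pm1)$ need not vanish. The clean route, which you do not carry out, is to write $\na p_L = \proj(\g\pay f + f_2\g' e_1) - (\g\pay f + f_2\g' e_1)$ and use $(\proj h)_2(\cdot,\pm1)=0$; together with the vanishing of the $e_1$-terms and of $\pay(\proj k)_2$ on the boundary, this reduces the trace to the contribution of $\g^2\pay^2 f$ and of $-\g\pay(\g\pay f + f_2\g' e_1)$, a concrete algebraic expression. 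Asserting that ``the factor 2 will emerge'' is not a substitute for this computation.

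Finally, for $\proj_0 L_a f = 0$ you assume $\proj_0 f = 0$, but the lemma asserts the identity for every $f\in H^3$, and the hypothesis is not needed: every term either carries a $\pay$, so $\proj_0$ annihilates it outright (via $\proj_0\pay=0$ and $\proj_0(h(x_2)g)=h\,\proj_0 g$), or is of the form $h(x_2)(\proj k)_2 e_1$, killed by Lemma \ref{lemm:pzt}; the one remaining term $\g'\pad p_L\,e_1$ is handled by $\proj_0\pad p_L = -\proj_0(\g\pay f_2)=0$, again a $\pay$ killed by $\proj_0$, with no use of $\proj_0 f = 0$.
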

    \begin{proof}
    Using Lemma \ref{prop:alg}, we find that each term in $\proj_0 L(f)$ is proportional either  to  $\proj_0 \p_1h$ or $\proj_0 (\proj h)_2$, both of which are zero by virtue of Lemmas  \ref{prop:alg} and \ref{lemm:pzt},  except for the  term $\proj_0(\g' \pad p_L e_1)=\g'\proj_0(\pad p_L)e_1$.   Since $\na p_L= \proj(\g \pay f + f_2 \g' e_1)-(\g \pay f + f_2 \g' e_1)$, Lemmas \ref{prop:alg} and \ref{lemm:pzt} again imply
\[
\proj_0(\p_2 p_L)=-\proj_0(\g \pay f + f_2 \g' e_1)_2=-\proj_0(\g \pay f_2)=0.
\]
Consequently $\proj_0(\g' \pad p_L e_1)=0$,  and thus \(\proj_0 L_a(f) = 0\).

We observe that the  fourth line in \eqref{def:L} has the form $\gamma(x_2)\p_1\proj h-\gamma'(x_2)(\proj h)_2e_1$ whose divergence is zero. On the other hand, the sum of the second and third lines in   \eqref{def:L} has the form $a(x_2)\p_1\proj h-(\proj h)_2\p_2a(x_2)e_1$, which again has divergence zero. Lastly, for the first line in  \eqref{def:L}, we calculate 
\begin{align*}
\dv(\gamma^2 \p_1^2 f+\gamma \p_1\na p_L-\g'\p_2p_Le_1)&=\g^2 \p_1^2\dv f+2\g\g'\p_1^2 f_2+\gamma \p_1\Delta p_L+\g'\p_1\p_2p_L-\g'\p_1\p_2 p_L\\
&=\g^2 \p_1^2\dv f+2\g\g'\p_1^2 f_2-\gamma \p_1\dv (\g \pay f + f_2 \g' e_1)=0.
\end{align*}
Using the fact that $(\proj h)_2(.,\pm 1)=0$ for any $h: \T\times (-1, 1)\to \Rr^2$, we find \begin{align*}
(L_af)_2(\cdot, \pm 1)&=\g^2 (\pay^2 f_2)(\cdot, \pm 1)+ \g \pay (\g \pay f + f_2 \g' e_1)_2(\cdot, \pm 1)=2\g^2 \pay^2 f_2(\cdot, \pm 1).
\end{align*}
Finally, we note that \(f\in H^3(\T \times (-1,1))\) suffices to justify the above calculations.  
    \end{proof}
    For $n\ge 1$, we set 
    \bq\label{def:H0}
    H_0^n :=\left \{g \in H^n(\T \times (-1, 1); \mathbb{R}^2): \; \dv g = 0, \; \proj_0 g = 0,\; g_2(\cdot, \pm 1)=0 \right\}\equiv \{ g\in H^n_\sigma: \; \proj_0 g = 0\}.
    \eq
    As a consequence of \eqref{eq:projlf}, if $f: D\times [0, T]\to \Rr^2$ is a sufficiently smooth solution to the linear problem 
    \bq\label{IVPLa}
    \begin{cases}
    \p_t f=L_a f\quad\text{in~} D\times (0, T),\\
    f\vert_{t=0}=f_0\in H^n_0,
    \end{cases}
    \eq
    then $f(t)\in H^n_0$ for all $t\in [0, T]$. A combination of  this and the {\it a prioir} estimates derived in the proof of Proposition \ref{prop:lenergy}  will imply that $L_a$ generates a strongly continuous semigroup $e^{tL_a}: H^n_0\to H^n_0$. The proof of Proposition \ref{prop:lenergy} requires the following estimates on the linear pressure $p_L$:
\begin{lemm}
For all $k\in \Nn$, there exists $C=C(k)>0$ such that $p_L$ (defined by \eqref{pL:La}) satisfies 
    \bq\label{est:pL}
    \hnorm{p_L}{k+1}  \le 
    \begin{cases}
    C\| \g'\|_{L^\infty}\| f_2\|_{L^2}\quad\text{if~} k=0,\\
    C\winorm{\g'}{k-1}\hnorm{f_2}{k}\quad\text{if~} k\ge 1.
    \end{cases} 
    \eq
    \end{lemm}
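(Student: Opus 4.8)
The plan is to reduce \eqref{pL:La} to a clean Neumann problem for the flat Laplacian and then handle it in two regimes: a low-regularity energy estimate for $k=0$ and standard elliptic regularity for $k\ge1$. The crucial preliminary observation is that, although $L_af$ suggests $p_L$ lives at the same regularity level as $f$, the data in \eqref{pL:La} simplify dramatically once we use that $f$ is divergence free with $f_2(\cdot,\pm1)=0$ --- the situation in which the estimate is applied (there $f$ ranges over $H_0^{k+1}$; see \eqref{def:H0} and the proof of Proposition~\ref{prop:lenergy}). Indeed, exactly as in \eqref{eq:pl}, one computes $\dv(\g\pay f+f_2\g'e_1)=\g\pay\dv f+2\g'\pay f_2=2\g'\pay f_2$ and $(\g\pay f+f_2\g'e_1)_2(\cdot,\pm1)=\g\,\pay f_2(\cdot,\pm1)=0$, so that $p_L$ is the mean-zero solution of $\Delta p_L=-2\g'\pay f_2$ in $D$ with $\pad p_L(\cdot,\pm1)=0$. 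The compatibility condition holds: $\int_D\Delta p_L=-2\int_D\g'\pay f_2=0$ since $\pay f_2$ has zero average in $x_1$, matching $\int_{\p D}\pad p_L=0$.

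For $k=0$ I would run the standard energy estimate: multiply $\Delta p_L=-2\g'\pay f_2$ by $p_L$, integrate over $D$, and integrate by parts using $\pad p_L(\cdot,\pm1)=0$ to obtain $\tnorm{\na p_L}^2=2\int_D \g'(\pay f_2)p_L=-2\int_D\g' f_2\,\pay p_L$, the last step being an integration by parts in the periodic variable $x_1$ (here $\g'=\g'(x_2)$ carries no $x_1$-derivative). Hence $\tnorm{\na p_L}^2\le 2\pnorm{\g'}{\infty}\tnorm{f_2}\tnorm{\na p_L}$, i.e.\ $\tnorm{\na p_L}\le 2\pnorm{\g'}{\infty}\tnorm{f_2}$, and the Poincar\'e--Wirtinger inequality (recall $p_L$ has mean zero) upgrades this to $\hnorm{p_L}{1}\le C\pnorm{\g'}{\infty}\tnorm{f_2}$.

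For $k\ge1$ I would invoke elliptic regularity for the Neumann problem on the flat cylinder $D=\T\times(-1,1)$. Since $\g'\in W^{k-1,\infty}$ and $f_2\in H^k$, the tame product estimate gives $\hnorm{\g'\pay f_2}{k-1}\le C\winorm{\g'}{k-1}\hnorm{f_2}{k}$; with vanishing Neumann data and the mean-zero normalization one gets $\hnorm{p_L}{k+1}\le C\hnorm{\Delta p_L}{k-1}\le C\winorm{\g'}{k-1}\hnorm{f_2}{k}$. (If one uses the version of the elliptic estimate carrying a lower-order term, $\hnorm{p_L}{k+1}\le C(\hnorm{\Delta p_L}{k-1}+\tnorm{p_L})$, the term $\tnorm{p_L}$ is absorbed using the $k=0$ bound together with $\pnorm{\g'}{\infty}\le\winorm{\g'}{k-1}$ and $\tnorm{f_2}\le\hnorm{f_2}{k}$.)

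The only real subtlety --- the place where one could lose the result --- is that a naive bound using the $H^k$-boundedness of Leray's projection (estimate \eqref{bound:Leray}) applied directly to $h=\g\pay f+f_2\g'e_1$ would produce $\winorm{\g}{k}$ instead of $\winorm{\g'}{k-1}$. The gain of one derivative on $\g$ is recovered precisely from the two cancellations noted above, namely $\g\pay\dv f=0$ in the source and $(\g\pay f)_2=0$ on $\p D$, both of which rely on $f$ being divergence free and tangent to the boundary. Once these are in place the argument is routine elliptic theory, so I do not expect a genuine obstacle beyond recording these cancellations carefully.
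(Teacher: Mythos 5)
Your proof is correct and follows essentially the same route as the paper: use the two cancellations to reduce to the homogeneous Neumann problem $\Delta p_L = -2\g'\pay f_2$, handle $k=0$ by testing against $p_L$, integrating by parts in $x_1$, and applying Poincar\'e--Wirtinger, and handle $k\ge 1$ by standard elliptic regularity for the Neumann problem together with a product estimate. You are somewhat more explicit than the paper in recording that the two cancellations ($\g\pay\dv f=0$ in the interior and $(\g\pay f)_2=0$ on $\p D$) require $f$ to be divergence free and tangent to the boundary, i.e. $f\in H^n_0$, which is indeed the setting in which the estimate is invoked in Proposition~\ref{prop:lenergy}.
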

    \begin{proof}
    Using the homogeneous Neumann condition in \eqref{eq:pl}, we have
    \begin{align*}
        \tnorm{\nabla p_L}^2 &= -\int_D \Delta p_L p_L = \int_D 2\g' \pay f_2 p_L  \\
        &= \int_D -2 \g' f_2 \pay p_L \le 2\tnorm{\g' f_2}^2 + \mez\tnorm{\nabla p_L}^2.
    \end{align*}
    Therefore,
    \[
    \tnorm{\nabla p_L} \le 2 \tnorm{\g' f_2}\le 2\| \g'\|_{L^\infty}\| f_2\|_{L^2},
    \]
and hence \eqref{est:pL} with $k=0$ follows from the Poincr\'e-Wirtinger inequality.

For $k\ge 1$, using  standard Sobolev estimates for the Neumann problem, we deduce 
\[
\| p_L\|_{H^{k+1}}\le C\| 2\g' \p_1 f_2\|_{H^{k-1}}\le C\| \g'\|_{W^{k-1, \infty}}\| f_2\|_{H^k}.
\]
\end{proof}
    \begin{prop}[Semigroup estimates] \label{prop:lenergy} 
    Let $n\ge 1$. There exists $C_n>0$ such that if 
    \begin{equation}
        \label{assumption:gammap}
        c_0 =\inf \vert \g(x_2) \vert > 0, \;   
  \winorm{\g'}{n-1} \le  \frac{c_0}{C_n}, \; \winorm{\g'}{n} \le  \frac{c_0}{C_nc_P}
    \end{equation}
then the following holds:  for some  sufficiently small \(\eps_*= \eps_*(n, c_0, c_P, \winorm{\g}{n+1}) > 0\), if 
    \begin{equation}
        \label{assumption:ainf}
        \winorm{a(.,t)}{n+1} \le  \eps_* \quad \forall t \in [0,T],
    \end{equation}
    then
    \begin{equation}
        \label{estimate:lenergy}
       \| e^{tL_a}\|_{H^n_0\to H^n_0}\le \exp\Big(-\frac58(\frac{c_0}{c_P})^2t\Big)\quad\forall t\in [0, T].
           \end{equation}
    \end{prop}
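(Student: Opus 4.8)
The plan is to derive \eqref{estimate:lenergy} from the matching \emph{a priori} estimate for the linear Cauchy problem \eqref{IVPLa}: every sufficiently smooth solution $f$ on $D\times[0,T]$ with $f|_{t=0}=f_0\in H^n_0$ satisfies $\hnorm{f(t)}{n}\le \exp\!\big(-\tfrac58(\tfrac{c_0}{c_P})^2 t\big)\hnorm{f_0}{n}$ for $t\in[0,T]$. Two ingredients drive this. First, by Lemma \ref{lemproj} the solution stays in $H^n_0$, so $\proj_0\partial^\alpha f=0$ for every $|\alpha|\le n$ (because $\proj_0$ annihilates $\pay$ and commutes with $\pad$); hence the Poincar\'e inequality \eqref{Poincare} holds at each derivative level, $\|\partial^\alpha f\|_{L^2}\le c_P\|\pay\partial^\alpha f\|_{L^2}$, so that, setting $D:=\hnorm{\pay f}{n}^2=\sum_{|\alpha|\le n}\|\pay\partial^\alpha f\|_{L^2}^2$, we have $\hnorm{f}{n}^2\le c_P^2 D$. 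Second, the leading term $\g^2\pay^2 f$ of \eqref{def:L} is dissipative in $x_1$: pairing $\partial^\alpha(\g^2\pay^2 f)$ with $\partial^\alpha f$ and integrating one $\pay$ by parts in the periodic variable gives $-\int_D\g^2|\pay\partial^\alpha f|^2\le -c_0^2\|\pay\partial^\alpha f\|_{L^2}^2$. Summing the $H^n$ energy identity thus yields
\[
\mez\ddt\hnorm{f}{n}^2=-\sum_{|\alpha|\le n}\int_D\g^2|\pay\partial^\alpha f|^2+E\le -c_0^2 D+|E|,
\]
where $E$ collects the commutators $[\partial^\alpha,\g^2]\pay^2 f$ together with every term of \eqref{def:L} other than $\g^2\pay^2 f$. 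The target is $|E|\le\tfrac38 c_0^2 D$, for then $\mez\ddt\hnorm{f}{n}^2\le-\tfrac58 c_0^2 D\le-\tfrac58(\tfrac{c_0}{c_P})^2\hnorm{f}{n}^2$ and Gronwall's inequality closes the estimate with the rate $\tfrac58(\tfrac{c_0}{c_P})^2$.

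I would split the estimate of $E$ into three families, absorbed respectively by the three hypotheses \eqref{assumption:gammap}--\eqref{assumption:ainf}. (i) The commutator $[\partial^\alpha,\g^2]\pay^2 f$ only involves $\g$ and its derivatives of order $\le n$; after moving one $\pay$ onto $\partial^\alpha f$, every resulting term is a product of two factors of the form $\|\pay\partial^\beta f\|_{L^2}$ with $|\beta|\le n$ (each $\le D^{1/2}$) times a coefficient bounded by $\winorm{\g'}{n-1}\pnorm{\g}{\infty}$ or $\winorm{\g'}{n-1}^2$; since $\winorm{\g'}{n-1}\le c_0/C_n$ forces $\pnorm{\g}{\infty}\le 2c_0$ and $\winorm{\g}{n}\le 3c_0$, these contribute $\les \tfrac{c_0^2}{C_n}D$. (ii) The pressure terms $\g\pay\na p_L$ and $\g'\pad p_L e_1$ are handled with the elliptic estimate \eqref{est:pL}, which at top order produces the factor $\hnorm{p_L}{n+1}\le C\winorm{\g'}{n-1}\hnorm{f_2}{n}$; combined with $\hnorm{f_2}{n}\le c_P D^{1/2}$ (again because $\proj_0 f_2=0$) these terms are $\les \winorm{\g'}{n}^2 c_P^2 D\le \tfrac{c_0^2}{C_n^2}D$ using $\winorm{\g'}{n}\le c_0/(C_nc_P)$. (iii) Each term in lines two through four of \eqref{def:L} carries a factor $a$, $\pad a$, or, inside a Leray projection, one of the combinations $\g\pay f$ or $a\pay f$; estimating $\proj$ by \eqref{bound:Leray}, using the tame product estimate, and using $\hnorm{\pay f}{n}^2=D$ and $\hnorm{f}{n}^2\le c_P^2 D$, each such term is $\les C(n,c_P,\winorm{\g}{n})\,\winorm{a}{n+1}\,D$ (plus higher powers of $\winorm{a}{n+1}$). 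Choosing $C_n$ large first and then $\eps_*=\eps_*(n,c_0,c_P,\winorm{\g}{n+1})$ small makes $|E|\le\tfrac38 c_0^2 D$.

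The point that I expect to require the most care --- and the structural reason the argument can work --- is that $L_a$ provides \emph{no} dissipation in $x_2$, so a priori an error term carrying $n+1$ derivatives of $f$ cannot be bounded by $\hnorm{f}{n}^2$. This is saved by observing that, apart from the pressure terms and the $\g'f_2e_1$-type terms, every term of \eqref{def:L} comes with an explicit outer $\pay$; integrating that $\pay$ by parts in $x_1$ turns the apparently top-order factor into $\|\pay\partial^\alpha f\|_{L^2}\le D^{1/2}$, i.e.\ into the dissipation itself. The few remaining ``$\pay$-free'' terms are exactly those attached to a small coefficient ($\g'$, $a$, or $\pad a$), and there the missing $x_1$-derivative is recovered by applying Poincar\'e once to $\hnorm{f}{n}\le c_P D^{1/2}$, the loss being compensated by smallness. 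Carrying out this bookkeeping carefully --- pairing each error term with the correct hypothesis and checking in every case that the degenerate dissipation still suffices after one integration by parts --- is the heart of the proof.

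Finally, the a priori estimate above, valid on the interval where \eqref{assumption:ainf} holds, together with Lemma \ref{lemproj} (the flow of \eqref{IVPLa} preserves $H^n_0$) and the standard energy-and-compactness construction of solutions to the degenerate linear parabolic system \eqref{IVPLa} --- uniqueness following from the $L^2$ version of the same estimate --- shows that $L_a$ generates a strongly continuous semigroup $e^{tL_a}$ on $H^n_0$ with $\|e^{tL_a}f_0\|_{H^n}\le\exp\!\big(-\tfrac58(\tfrac{c_0}{c_P})^2 t\big)\|f_0\|_{H^n}$ for all $f_0\in H^n_0$ and $t\in[0,T]$, which is \eqref{estimate:lenergy}.
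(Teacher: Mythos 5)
Your proposal is correct and matches the paper's proof in all essentials: extract the $x_1$-dissipation from $\g^2\pay^2 f$, bound every other term of $L_a$ by the Poincar\'e-controlled quantity $\hnorm{\pay f}{n}^2$ using the hypotheses on $\g'$ (for the $\g$-only terms and the pressure) and the smallness of $a$ (for the $a$-dependent terms), then close with Gronwall. The one cosmetic difference is in the commutator $[\partial^\alpha,\g^2]\pay^2 f$: you pull out $\pnorm{\g}{\infty}$ and then infer $\pnorm{\g}{\infty}\le 2c_0$ from the oscillation bound implied by $\winorm{\g'}{n-1}\le c_0/C_n$ on the interval $(-1,1)$ (a correct deduction, provided $C_n\ge 2$), whereas the paper keeps the undifferentiated $\g$ paired with the dissipation norm $\tnorm{\g\pay\partial^\alpha f}$ and absorbs it by Young's inequality, avoiding any explicit comparison of $\pnorm{\g}{\infty}$ with $c_0$; both routes are valid and yield the same constants.
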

    \begin{proof}
  We will only derive  the {\it a priori} estimate 
      \[
      \| f(t)\|_{H^n_0}\le \| f_0\|_{H^n_0}\exp\Big(-\frac58(\frac{c_0}{c_P})^2t\Big) 
      \]
      for sufficiently smooth solutions of \eqref{IVPLa}.  The idea is to show that under the conditions \eqref{assumption:gammap} and \eqref{assumption:ainf}, the partially dissipative term \(\g^2 \pay^2 f\)  controls  the rest of the terms in the linear operator $L_a$ given by \eqref{def:L}. Moreover, since $L_a$ commutes with $\p_1$, it suffices to prove estimates for $\p_2^k f$, $0\le k\le n$. To this end  we take \(\pad^k\) of \eqref{def:L}, multiply it by \(\pad^k f\) and integrate on $D$.  In what follows we bound each of the fifteen terms in the resulting equation. We use the convention that $\sum_{j=1}^k=0$ if $k=0$. 
               \begin{enumerate}
            \item Using Leibniz's rule followed by an integration by parts in $x_1\in \T$, we can write the first term as 
            \begin{align*}\int \pad^k (\g^2 \pay^2 f) \cdot \pad^k f &= \int \g^2 \pad^k \pay^2 f \cdot \pad^k f + \sum_{j = 1}^{k} C_{k,j} \int \pad^j \g^2 \pad^{k-j} \pay^2 f \cdot \pad^k f \\
            & = - \tnorm{\g \pay \pad^k f}^2 - \sum_{j = 1}^k C_{k,j} \int \pad^j \g^2 \pad^{k - j} \pay f \cdot \pay \pad^k f\\
            &= - \tnorm{\g \pay \pad^k f}^2 - \sum_{j = 1}^k\sum_{\ell=0}^{j-1}C_{k,j, \ell}\int \p_2^{\ell} \gamma\p_2^{j-\ell}\gamma \pad^{k - j} \pay f \cdot \pay \pad^k f.
            \end{align*}
The integrand in the second integral can be written as  $\p_2^{j}\gamma\pad^{k - j} \pay f \cdot (\gamma \pay  \pad^k f)$ when  $\ell=0$ and involves derivatives of $\gamma$ but not $\gamma$ itself when $\ell\ge 1$. Hence, 
            applying Cauchy-Schwarz's inequality  gives 
            \begin{align*}
            \int \pad^k(\g^2 \pay^2f) \cdot \pad^k f &\le - \tnorm{\g \pay \pad^k f}^2 + C_k\| \gamma'\|_{W^{k-1, \infty}}\|  \pay f \|_{H^{k-1}}\|\gamma \pay  \pad^k f\|_{L^2}\\
            &\qquad + C_k \winorm{\gamma'}{k-1}^2\hnorm{\pay f}{k-1}\hnorm{\pay f}{k} \\
            &\le - \frac{31}{32}\tnorm{\g \pay \pad^k f}^2+ C_k \winorm{\gamma'}{k-1}^2\hnorm{\pay f}{k}^2.
            \end{align*}
            Here $ - \tnorm{\g \pay \pad^k f}^2$ is the good term that will be used to control the next fourteen terms. 
        \item Using Leibniz's rule followed by integration by parts in $x_1\in \T$, Young's  inequality and Cauchy-Schwarz's inequality, we find
        \begin{align*}
             \int \pad^k (\g \pay \nabla p_L) \cdot \pad^k f 
            &= -\int \pad^k \nabla p_L \cdot \g \pay \pad^k f  + \sum_{j=1}^{k} C_{k,j} \int \pad^j \g \pad^{k-j} \pay \nabla p_L \cdot \pad^k f \\ 
            & \le \frac{1}{32} \tnorm{\g \pay \pad^k f}^2 + 32 \tnorm{\pad^k \nabla p_L}^2 \\
            & \quad + C_k \winorm{\g'}{k-1} \tnorm{\pad^k f} \sum_{j = 1}^k \tnorm{\pad^{k-j} \pay \na p_L} 
        \end{align*}
        Then invoking the pressure estimate  \eqref{est:pL} yields
        \begin{align*}
            \int \pad^k (\g \pay \nabla p_L) \cdot \pad^k f \le   \frac{1}{32} \tnorm{\gamma \pay \pad^k f}^2 + C_k \winorm{\g'}{k-1}^2 \hnorm{f}{k}^2.
        \end{align*}
        \item  Cauchy-Schwarz's inequality and \eqref{est:pL}  imply
     \[
            -\int \pad^k (\g' \pad p_L) \pad^k f_1  \le \winorm{\g'}{k} \hnorm{p_L}{k+1}\hnorm{f}{k}\le  C_k\winorm{\g'}{k}^2\hnorm{f}{k}^2.
\]
               \item Since \(a\) is a function of \(x_2\) only, it commutes with \(\pay\). Hence, we can integrate by parts in $x_1$: 
        \begin{align*}
            \int \pad^k \left(a \pay \proj (f_2 \pad a e_1)\right) \cdot \pad^k f 
            = - \int \pad^k \left(a \proj (f_2 \pad a e_1) \right) \cdot \pay \pad^k f.
        \end{align*}
        Then Cauchy-Schwarz's  inequality and the continuity  \eqref{bound:Leray} of $\proj$ implies 
        \begin{align*}
             \int \pad^k \left(a \pay \proj (f_2 \pad a e_1)\right) \cdot \pad^k f  &\le
            \hnorm{a \proj (f_2 \pad a e_1)}{k} \tnorm{\pay \pad^k f} \\
            & \le C_k\winorm{a}{k}\hnorm{f_2 \pad a e_1}{k}\tnorm{\pay \pad^k f} \\ 
            &\le C_k\winorm{a}{k}^2 \hnorm{f}{k}\tnorm{\pay \pad^k f}.
        \end{align*}
       Since $\proj_0 f=0$, we have $\proj_0\p^\alpha f=0 $,  $\alpha \in \Nn^2$. Hence the Poincar\'e inequality \eqref{Poincare} yields
       \bq\label{Poincaref}
        \hnorm{f}{k}\le c_P \hnorm{\pay f}{k}.
        \eq
        Then invoking the assumption \eqref{assumption:ainf}, we obtain 
        \[
        \int \pad^k \left(a \pay \proj (f_2 \pad a e_1)\right) \cdot \pad^k f     \le 
        C_k \eps_*^2 c_P \hnorm{\pay f}{k}^2.
        \]
               \item The next term can be treated analogously except that \eqref{Poincaref} is not needed:
        \begin{align*}
        \int \pad^k \left( a \pay \proj( a \pay f)\right) \cdot \pad^k f 
       & = -\int \pad^k \left( a \proj( a \pay f)\right) \cdot \pay \pad^k f\\
       & \le C_k\winorm{a}{k}^2 \hnorm{\p_1f}{k}\tnorm{\pay \pad^k f}\le C_k \eps_*^2 \hnorm{\pay f}{k}^2.
        \end{align*}
  The remaining terms can be controlled as follows:
        \item 
        \begin{align*}
            \int \pad^k\left(a \pay \proj(\g \pay f)\right) \cdot \pad^k f &=-\int \pad^k\left(a  \proj(\g \pay f)\right) \cdot \p_1\pad^k f\\
            & \le C_k \| a\|_{W^{k, \infty}}\winorm{\g}{k} \hnorm{\pay f}{k}^2 \\
           & \le C_k \eps_* \winorm{\g}{k} \hnorm{\pay f}{k}^2.
     \end{align*}
        \item       
          \begin{align*}
            \int \pad^k \left( a \pay \proj (\g' f_2 e_1)\right) \cdot \pad^k f 
            &= - \int \pad^k \left( a \proj (\g' f_2 e_1)\right) \cdot \pay \pad^k f\\
            &\le C_k \winorm{a}{k}\winorm{\g'}{k}\hnorm{f}{k}\hnorm{\pay f}{k} \\ 
            & \le C_k \eps_* c_P \winorm{\g'}{k} \hnorm{\pay f}{k}^2.
        \end{align*}
        \item 
        \begin{align*}
            \int - \pad^k \Bigl( \bigl( \proj (f_2 \pad a e_1)\bigl)_2 \pad a \Bigl) \pad^k f_1 \le C_k \winorm{a}{k+1}^2 \hnorm{f}{k}^2 \le C_k \eps_*^2 \hnorm{f}{k}^2.
        \end{align*}
        The highest order norm of $a$ in \eqref{assumption:ainf} is needed for this term. 
        \item 
        \begin{align*}
            \int - \pad^k \Bigl( \bigl( \proj (a \pay f) \bigr) \pad a \Bigr) \pad^k f_1
            &\le C_k \winorm{a}{k}\winorm{a}{k+1}\hnorm{\pay f}{k}\hnorm{f}{k}\le C_k \eps_*^2 c_P \hnorm{\pay f}{k}^2.
        \end{align*}
            \item 
        \begin{align*}
            \int - \pad^k \Bigl( \bigl( \proj (\g \pay f) \bigr)_2 \pad a \Bigr) \pad^k f_1
            &\le C_k \winorm{a}{k+1}\winorm{\g}{k}\hnorm{\pay f}{k}\hnorm{f}{k} \\ 
            &\le C_k \eps_* c_P \winorm{\g}{k}\hnorm{\pay f}{k}^2.
        \end{align*}
        \item 
        \begin{align*}
            \int - \pad^k \Bigl( \bigl( \proj (\g' f_2 e_1) \bigr)_2 \pad a \Bigr) \pad^k f_1 
            &\le C_k \winorm{a}{k+1} \winorm{\g'}{k}\hnorm{f}{k}^2 \le C_k \eps_* \winorm{\g'}{k} \hnorm{f}{k}^2.
        \end{align*}
  \item 
        \begin{align*}
        \int \pad^k \left( \g \pay \proj (f_2 \pad a e_1) \right) \cdot \pad^k f&=- \int \pad^k \left(\g \proj (f_2 \pad a e_1) \right) \cdot \pay \pad^k f \\
        &\le C_k \winorm{a}{k+1} \winorm{\g}{k}\hnorm{f}{k}\hnorm{\pay f}{k} \\
        &\le C_k \eps_* c_P \winorm{\g}{k} \hnorm{\pay f}{k}^2.
        \end{align*}
 \item
        \begin{align*}
            \int \pad^k \left( \g \pay \proj (a \pay f) \right) \cdot \pad^k f
            &= - \int \pad^k \left( \g \proj (a \pay f) \right) \cdot \pay \pad^k f \\
            &\le C_k \winorm{a}{k}\winorm{\g}{k}\hnorm{\pay f}{k}^2 \\ 
            &\le C_k \eps_* \winorm{\g}{k} \hnorm{\pay f}{k}^2.
        \end{align*}
          \item 
        \begin{align*}
            \int - \pad^k \Bigl( \g' \bigl( \proj (f_2 \pad a e_1) \bigr)_2 \Bigr) \pad^k f_1
            &\le C_k \winorm{a}{k+1}\winorm{\g'}{k}\hnorm{f}{k}^2 \\ 
            &\le C_k \eps_* \winorm{\g'}{k}\hnorm{f}{k}^2.
        \end{align*}
         \item 
        \begin{align*}
            \int -\pad^k \Bigl( \g' \bigl( \proj (a \pay f) \bigr)_2 \Bigr) \pad^k f_1
            &\le C_k \winorm{\g'}{k}\winorm{a}{k} \hnorm{\pay f}{k}\hnorm{f}{k} \\ 
            &\le C_k \eps_* c_P \winorm{\g'}{k} \hnorm{\pay f}{k}^2.
        \end{align*}
         \end{enumerate}
Gathering the above estimates of the fifteen terms yields
        \begin{align*}
            \int \pad^k L_a(f) \cdot \pad^k f 
            \le & - \frac{15}{16} \tnorm{\g \pay \pad^k f}^2 \\ 
            &  \: + C_k (\| \gamma'\|^2_{W^{k-1, \infty}} + \eps_*^2 c_P + \eps_*^2 + \eps_* \winorm{\g}{k} + \eps_* c_P \winorm{\g}{k+1}) \hnorm{\pay f}{k}^2  \\ 
            & \: + C_k (\winorm{\g'}{k}^2 + \eps_*^2 + \eps_* \winorm{\g'}{k} ) \hnorm{f}{k}^2
        \end{align*}
        for all $k\le n$.   By leveraging the commutativity of the operator \(L_a\) with \(\pay \), we deduce that 
         \bq\label{linenergyestimate}
         \begin{aligned}
             \mez \ddt \hnorm{f}{n}^2 \le & - \frac{15}{16}c_0^2 \hnorm{\pay f}{n}^2 \nonumber \\
              & \: + \tilde{C}_n (\| \gamma'\|^2_{W^{n-1, \infty}} + \eps_*^2 c_P + \eps_*^2 + \eps_* \winorm{\g}{n} + \eps_* c_P \winorm{\g}{n+1}) \hnorm{\pay f}{n}^2 \\ 
            & \: + \tilde{C}_n ( \winorm{\g'}{n}^2 + \eps_*^2 + \eps_* \winorm{\g'}{n} ) \hnorm{f}{n}^2. 
         \end{aligned}
         \eq
        where we have used the lower bound $\inf |\gamma|\ge c_0>0$ given in \eqref{assumption:gammap}.  Now we choose $C_n=(16\tilde{C}_n)^\mez$ in \eqref{assumption:gammap} so that 
         \[
         \winorm{\g'}{n-1}^2 \le  \frac{c_0^2}{16 \tilde{C}_n}, \; \winorm{\g'}{n}^2 \le  \frac{c_0^2}{16\tilde{C}_nc^2_P}.
         \]
         Then choosing $\eps_*=\eps_*(n, c_0, c_P, \winorm{\g}{n+1})$ sufficiently small, we conclude from \eqref{linenergyestimate} and the Poincar\'e inequality \eqref{Poincaref} that
         \bq\label{dtHnlin}
             \mez \ddt \hnorm{f}{n}^2 \le \left(-\frac{15}{16}+\frac{2}{16}+\frac{2}{16} \right) (\frac{c_0}{c_P})^2 \hnorm{f}{n}^2\le -\frac{5}{8}(\frac{c_0}{c_P})^2 \hnorm{f}{n}^2.
         \eq
         \end{proof}  
\subsection{Proof of Theorem \ref{theo:main}}
Let $b_0\in H^m_\sigma$ with $\| b_0\|_{H^m}\le \eps$, where $\eps<1$ will be chosen later. By virtue of Proposition \ref{prop:local}, a unique solution $b\in C([0, T_*); H^m_\sigma)$ exits on a maximal interval $[0, T_*)$ and 
\bq\label{weakblowupcd}
\limsup_{t\to T_*} \| b(t)\|_{H^m}=\infty
\eq
if $T_*<\infty$.  Moreover, the short-time bound \eqref{Hmbound:bu} implies that \eqref{bound:main} holds on some  small time interval. Let $T_{**}\le T_*$ be the maximal time that \eqref{bound:main} is valid until.  Fix any  $T\in (0, T_{**})$. We shall a posteriori prove  that the bounds in \eqref{bound:main} can be improved to
    \begin{align*}
        \hnorm{\proj_\perp b(.,t)}{k} = \hnorm{f(.,t)}{k} & \leq 2\eps e^{-\frac58\left(\frac{c_0}{c_P}\right)^2 t} \\ 
        \hnorm{\proj_0b_1(.,t)}{k+2} = \hnorm{a(.,t)}{k+2} & \leq 2\eps \\
        \hnorm{b(.,t)}{m} & \leq 2\eps e^{\frac18 \left(\frac{c_0}{c_P}\right)^2t}
    \end{align*}
    for $t\in [0, T]$ when  $\eps$ is sufficiently small independently of $T$.  This will imply that $T_{**}=T_*$. In particular, the bound \eqref{bound:mainc} prevents \eqref{weakblowupcd} from happening, thereby proving that $T_{**}=T_*=\infty$. 
    
    \subsubsection{Estimates for the nonlinear terms}
For our bootstrap argument to work, we need the nonlinear term \(N\) to decay faster than \(f\) and $N'$ to decay in an integrable manner in $t\in (0, \infty)$. In this section we always consider $t\in [0, T]$ with $T<T_{**}$ as above. We claim that  
   \begin{equation}
   \label{bound:N}
   \hnorm{N(f,w)(.,t)}{k} \les(1 +  \winorm{\g}{k+2}) \eps^2 e^{-\frac{4}{5}\left(\frac{c_0}{c_P}\right)^2 t},
    \end{equation}
   and
   \begin{equation}
   \label{bound:N'}
   \hnorm{N'(f,w)(.,t)}{k+2} \les (1 + \winorm{\g}{k+4} )\eps^2 e^{-\frac{3}{40}\left(\frac{c_0}{c_P}\right)^2 t},
   \end{equation}
where the implicit constant depends only on \(m\) and \(k\). Note that \eqref{bound:N} has a faster decay rate than \(\hnorm{f}{k}\) above. To prove \eqref{bound:N} and \eqref{bound:N'}, we start by bounding \(\hnorm{w}{k+\beta}\) for $ 0 \le \beta\le  m-k-1$, in terms of the norms of \(a, f\) and \(\g\). The expression  \eqref{eq:wf} of $w$ implies
       \begin{align*}
       \hnorm{w}{k + \beta} 
       &\lesssim \hnorm{\dgr{f}{f}}{k + \beta} + \hnorm{f_2 \pad a e_1}{k + \beta} + \hnorm{a \pay f}{k+ \beta} + \hnorm{\g \pay f}{k + \beta} + \hnorm{\g' f_2 e_1}{k + \beta}.
       \end{align*}
Since \(H^{k + \beta}(\T \times (0,1))\) is an algebra for \(k >  1\),  interpolation of Sobolev norms yields
       \begin{equation} \label{bound:w}
       \hnorm{w}{k + \beta} \lesssim 
       \hnorm{f}{k}^{2 - \frac{2\beta + 1}{m - k}} \hnorm{f}{m}^{\frac{2\beta + 1}{m - k}} + (\hnorm{a}{k + \beta + 1} + \hnorm{\g}{k + \beta + 1})\hnorm{f}{k}^{1 - \frac{\beta + 1}{m - k}}\hnorm{f}{m}^{\frac{\beta + 1}{m - k}}.
       \end{equation}
       Now, we apply the above to the terms in \eqref{def:N}: \\
       \begin{enumerate}
        \item \(\hnorm{\proj_\perp (\dgr{f}{w})}{k}
        \lesssim \hnorm{f}{k}\hnorm{w}{k+1} \\
        \lesssim \hnorm{f}{k}^{3 - \frac{3}{m - k}} \hnorm{f}{m}^{\frac{3}{m - k}} + (\hnorm{a}{k + 2} + \hnorm{\g}{k + 2})\hnorm{f}{k}^{2 - \frac{2}{m - k}}\hnorm{f}{m}^{\frac{2}{m - k}}\) \\
        \item \(
        \hnorm{f_2\pad \proj_0(f_2\pad f_1)e_1}{k} \lesssim  \hnorm{f}{k} \hnorm{f}{k+1}   \hnorm{f}{k+2} 
        \lesssim \hnorm{f}{k}^{3 - \frac{3}{m - k}} \hnorm{f}{m}^{\frac{3}{m - k}}
        \)\\ 
        \item \(
        \hnorm{\proj_\perp (\dgr{w}{f})}{k}
        \lesssim \hnorm{f}{k}^{1 - \frac{1}{m - k}} \hnorm{f}{m}^{\frac{1}{m-k}}
        \left(\hnorm{f}{k}^{2 - \frac{1}{m - k}} \hnorm{f}{m}^{\frac{1}{m - k}} + (\hnorm{a}{k + 1} + \hnorm{\g}{k + 1})\hnorm{f}{k}^{1 - \frac{1}{m - k}}\hnorm{f}{m}^{\frac{1}{m - k}}\right) \\ 
        \lesssim 
        \hnorm{f}{k}^{3 - \frac{2}{m - k}} \hnorm{f}{m}^{\frac{2}{m - k}} + (\hnorm{a}{k + 1} + \hnorm{\g}{k + 1})\hnorm{f}{k}^{2 - \frac{2}{m - k}}\hnorm{f}{m}^{\frac{2}{m - k}}
        \) \\ 
        \item \(
        \hnorm{\proj_0(f_2\pad f_1)\pay f}{k} \lesssim \hnorm{f}{k+1}^3 
        \lesssim \hnorm{f}{k}^{3 - \frac{3}{m - k}} \hnorm{f}{m}^{\frac{3}{m - k}}
        \) \\ 
        \item \(
        \hnorm{a\pay \proj(\dgr{f}{f})}{k} \lesssim \hnorm{a}{k}  \hnorm{f}{k + 1}\hnorm{f}{k + 2}
        \lesssim \hnorm{a}{k} \hnorm{f}{k}^{2 - \frac{3}{m - k}} \hnorm{f}{m}^{\frac{3}{m-k}}
        \)\\
        \item \(
        \hnorm{\left(\proj_\perp \proj (\dgr{f}{f})\right)_2 \pad a e_1}{k}
        \lesssim \hnorm{a}{k+1} \hnorm{f}{k+1}^2 
        \lesssim \hnorm{a}{k+1} \hnorm{f}{k}^{2 - \frac{2}{m - k}} \hnorm{f}{m}^{\frac{2}{m - k}}
        \) \\ 
        \item \(
        \hnorm{\g \pay \proj(\dgr{f}{f})}{k} \lesssim \hnorm{\g}{k} \hnorm{f}{k+1}\hnorm{f}{k+2}
        \lesssim \hnorm{\g}{k} \hnorm{f}{k}^{2 - \frac{3}{m -k}}\hnorm{f}{m}^{\frac{3}{m - k}}
        \) \\
        \item \(
        \hnorm{\g'\left(\proj (\dgr{f}{f})\right)_2e_1}{k}
        \lesssim \hnorm{\g}{k+1} \hnorm{f}{k+1}^2 \lesssim \hnorm{\g}{k+1} \hnorm{f}{k}^{2 - \frac{2}{m - k}} \hnorm{f}{m}^{\frac{2}{m - k}}.
        \)
       \end{enumerate}
        All the implicit constants depend on \(m, k\). Notice the 5th term, \(\hnorm{a \pay \proj(\dgr{f}{f})}{k}\), and the 7th term, \(\hnorm{\g \pay \proj (\dgr{f}{f})}{k}\), have the worst decay rates. The following computation uses the assumption \(m \ge k + 5\) to ensure these two terms decay as stated in \eqref{bound:N}:
        \begin{align*}
            \hnorm{f}{k}^{2 - \frac{3}{m-k}}\hnorm{f}{m}^{\frac{3}{m-k}} \lesssim \eps^2 \exp\Big\{ \big[ (-\frac58)(2 - \frac{3}{m-k}) + (\frac18)(\frac{3}{m-k}) \big] (\frac{c_0}{c_P})^2t\Big\}\lesssim \eps^2 \exp \Big[-\frac{4}{5}(\frac{c_0}{c_P})^2t \Big].
        \end{align*}
        Recalling that \(\hnorm{a}{k+2} \le 2\eps\), we conclude the bound \eqref{bound:N}.

        Regarding \(\hnorm{N'(f,w)}{k+2}\), applying \eqref{bound:w} we bound  
        \begin{align*}
            &\hnorm{N'(f,w)}{k+2}= \hnorm{\pad \proj_0(f_2w_1 - f_1w_2)}{k+2} 
            \lesssim \hnorm{f}{k+3}\hnorm{w}{k+3} \\
            &\lesssim \hnorm{f}{k}^{1 - \frac{3}{m - k}} \hnorm{f}{m}^{\frac{3}{m - k}} \left(\hnorm{f}{k}^{2 - \frac{7}{m - k}} \hnorm{f}{m}^{\frac{7}{m - k}} + (\hnorm{a}{k + 4} + \hnorm{\g}{k + 4})\hnorm{f}{k}^{1 - \frac{4}{m - k}}\hnorm{f}{m}^{\frac{4}{m - k}} \right) \\ 
            &\lesssim \hnorm{f}{k}^{3 - \frac{10}{m - k}} \hnorm{f}{m}^{\frac{10}{m - k}} + (\hnorm{a}{k + 4} + \hnorm{\g}{k + 4})\hnorm{f}{k}^{2 - \frac{7}{m - k}}\hnorm{f}{m}^{\frac{7}{m - k}}.
        \end{align*}
       We have 
       \begin{align*}
           \hnorm{f}{k}^{3 - \frac{10}{m-k}}\hnorm{f}{m}^{\frac{10}{m-k}} \lesssim \eps^3 \exp \Big\{\big[ (-\frac58)(3 - \frac{10}{m-k}) + (\frac{1}{8})(\frac{10}{m-k})\big] (\frac{c_0}{c_P})^2 t\Big\} \le \eps^3 \exp \Big[ -\frac{3}{8} (\frac{c_0}{c_P})^2t \Big].
       \end{align*}
and,  since \(\hnorm{a}{k+4} \les \hnorm{b}{m} \le 2 \eps e^{\frac18 (\frac{c_0}{c_P})^2t}\), 
       \begin{align*}
           (\hnorm{a}{k+4} + \hnorm{\g}{k+4})\hnorm{f}{k}^{2 - \frac{7}{m-k}}\hnorm{f}{m}^{\frac{7}{m-k}} \les (1 + \winorm{\g}{k+4})
           \eps^2  \exp \Big[ -\frac{3}{40}(\frac{c_0}{c_P})^2t\Big].
       \end{align*}
     This proves the bound  \eqref{bound:N'}.
       \subsubsection{Closing the \eqref{bound:maina} bootstrap}
We first recall the $f$-equation \eqref{eq:fevo}: $\p_t f=L_af+N(f, w)$, $t\in [0, T_*)$. Since $f(t)=\proj_\perp b(t)$, we have $f\in C([0, T]; H^m_0)$. Consequently $L_a f$, $N(f, w)$, and hence $\p_t f$ belong to $ C([0, T]; H^{m-2})$. 
Since the constraints in the definition \eqref{def:H0} are only spatial, they commute with $\p_t$, whence $\p_t f\in C([0, T]; H^{m-2}_0)$. On the other hand, for $m-2\ge 6>3$, Lemma \ref{lemproj} implies that $L_a f\in C([0, T]; H^{m-2}_0)$. It follows that $N(f, w)=\p_tf-L_af\in C([0, T]; H^{m-2}_0)$. Hence,  the Duhamel formula 
       \bq \label{eq:fduha}
       f(t) = e^{L_at}f_0 + \int_0^t e^{L_a(t-s)}N(f,w)(s)ds
       \eq 
       holds in $H^{m-2}_0$ for all $t\in [0, T]$. The semigroup $e^{tL_a}$ will be estimated in $H^k_0$ by virtue of Proposition \ref{prop:lenergy}.  The conditions in \eqref{assumption:gammap} (with $n=k$) are satisfied by means of the assumptions  \eqref{assumption:gammapmain1} and \eqref{assumption:gammapmain2} for suitable constants $C_k$ and $C_{m, k}$. Moreover, it follows from \eqref{bound:mainb} and the embedding \(H^{k+2} \hookrightarrow W^{k+1, \infty}\) that $\| a\|_{W^{k+1, \infty}}\le C_k\| a\|_{H^{k+1}}$. Thus the application of Proposition \ref{prop:lenergy} is justified by choosing 
       \bq\label{eps1}
       C_k\eps<\eps_*,
       \eq
        where  $\eps_*= \eps_*(k, c_0, c_P, \winorm{\g}{k+1})$ is the constant in \eqref{assumption:ainf}. Invoking \eqref{bound:N}, we obtain 
             \begin{align*}
           \hnorm{f}{k} &\leq \hnorm{e^{Lt}f_0}{k} + \int_0^t \hnorm{e^{L(t-s)}N(f,w)(s)}{k}ds\\
           &\leq \hnorm{f_0}{k}e^{-\frac58\left(\frac{c_0}{c_P}\right)^2 t} + \int_0^t \hnorm{N(f,w)(s)}{k} e^{-\frac58\left(\frac{c_0}{c_P}\right)^2 (t-s)} ds \\
           &\leq \eps e^{-\frac58\left(\frac{c_0}{c_P}\right)^2 t} + \int_0^t C_{m,k}(1 + \winorm{\g}{k+2})\eps^2 e^{-\frac{4}{5}(\frac{c_0}{c_P})^2s} e^{-\frac58\left(\frac{c_0}{c_P}\right)^2 (t-s)} ds \\
           &\leq \eps e^{-\frac58\left(\frac{c_0}{c_P}\right)^2 t} \left( 1 +  \eps C_{m,k}(1 + \winorm{\g}{k+2}) \frac{40}{7}(\frac{c_P}{c_0})^2\right) \\ 
           &\leq 2\eps e^{-\frac58\left(\frac{c_0}{c_P}\right)^2 t}
       \end{align*}
upon choosing 
\bq\label{eps2}
 \eps C_{m,k}(1 + \winorm{\g}{k+2}) \frac{40}{7}(\frac{c_P}{c_0})^2<1.
\eq
       \subsubsection{Closing the \eqref{bound:mainb} bootstrap}
       Integrating \eqref{eq:aevo}  in time and using the bound \eqref{bound:N'} for \(\hnorm{N'}{k+2}\), we arrive at
       \begin{align*}
           \hnorm{a}{k+2} &\leq \hnorm{a(0)}{k+2} + \int_0^t \hnorm{N'(f,w)(s)}{k+2} ds \\
           &\leq \eps +\eps^2  C_{m,k}(1 +\winorm{\g}{k+4})\frac{40}{3}(\frac{c_P}{c_0})^2 \\ 
           &\leq 2 \eps
       \end{align*}
       upon choosing
       \bq\label{eps3}
       \eps  C_{m,k}(1 +\winorm{\g}{k+4})\frac{40}{3}(\frac{c_P}{c_0})^2<1.
       \eq
       If additionally we assume \(\hnorm{\proj_0b_{0,1}}{k+2} =\hnorm{a(0)}{k+2} \ge C\eps^2\) with $C=C(m, k, \winorm{\g}{k+4})$ sufficiently large, then 
       \bq\label{lowerboun:a}
       \hnorm{a(t)}{k+2} \gtrsim \eps^2\quad \forall t\in [0, T_*).
       \eq
       As we will eventually obtain that $T_*=\infty$, the preceding inequality will imply that the total magnetic field \(B\) does not  converge to the shear flow \( \g(x_2) e_1\).
       \subsubsection{Closing the \eqref{bound:mainc} bootstrap}  To this end we use the estimate   \eqref{bound:refinedwell}:
        \bq\label{bound:refinedwell:2}
        \hnorm{b(t)}{m}^2 \le \hnorm{b_0}{m}^2 \exp\left( c_m \int_0^t \hnorm{Db}{k-1}^2 + \pnorm{Du}{\infty} + \winorm{\g'}{m}^2 ds\right).
        \eq
       Invoking  \eqref{bound:maina} and \eqref{bound:mainb}, we find 
       \[
       \int_0^t \hnorm{Db}{k-1}^2 ds \le 9 \varepsilon^2 \int_0^t \left(1+e^{-\frac{5}{4}(\frac{c_0}{c_P})^2s} \right)ds\le 9\varepsilon^2\left(t+\frac{4}{5}(\frac{c_P}{c_0})^2\right).
       \]
       As for the second term, upon recalling \eqref{eq:tib} and the fact that \(b_2 = f_2\) and \(\pay b = \pay f\), we obtain 
       \[
       u = \proj (\dgr{b}{b} + \g \pay f + f_2 \g' e_1),
       \]
    whence 
       \[
       \hnorm{u}{k+1} \le C_k (\hnorm{\dgr{b}{b}}{k+1} + \hnorm{\g}{k+1} \hnorm{f}{k+2} + \hnorm{\g}{k+2}\hnorm{f}{k+1}).
       \]
       Since \(\dgr{b}{b} = \dgr{f}{f} + a \pay f + f_2 \pad a e_1\), it follows that
       \[
       \hnorm{u}{k+1} \le C_k (\hnorm{f}{k+1} + \hnorm{a}{k+2} + \wnorg{k+2})\hnorm{f}{k+2} .
    \]
 \(\hnorm{f}{k+2}\) decays because 
    \begin{align*}
    \hnorm{f}{k+2} &\le C_{m,k}\hnorm{f}{k}^{1 - \frac{2}{m-k}}\hnorm{f}{m}^{\frac{2}{m-k}} \\
   & \le C_{m,k} \eps \exp\Big\{\big[-\frac58(1 - \frac{2}{m-k}) + \frac18(\frac{2}{m-k})\big] (\frac{c_0}{c_P})^2 t \Big\}\\
  &  \le C_{m,k}\eps e^{-\frac{13}{40}(\frac{c_0}{c_P})^2t}.
    \end{align*}
It follows that 
    \begin{equation}
    \label{bound:uhk}
    \hnorm{u}{k+1} \le C_{m,k}(1 + \winorm{\g}{k+2})\eps e^{-\frac{13}{40}(\frac{c_0}{c_P})^2t}.
    \end{equation}
This proves the exponential decay \eqref{decay:u:thm}, and  since $k\ge 3$, it follows that
       \[
       \int_0^t \pnorm{D u(s)}{\infty} ds   \le \eps C_{m, k}(1+ \winorm{\g}{k+2})(\frac{c_P}{c_0})^2.
       \]
By choosing the constant  $C_{m, k}$ in  \eqref{assumption:gammapmain2} sufficiently large, we have 
       \[
       c_m\int_0^t \winorm{\g'}{m}^2 ds \le \frac{1}{8} \left(\frac{c_0}{c_P}\right)^2 t. 
       \]
       Inserting the above estimates into \eqref{bound:refinedwell:2} yields 
       \[
       \hnorm{b(t)}{m}^2 \le \hnorm{b_0}{m}^2 \exp\left\{\eps^2 C_{m, k}t+ C_{m, k}\eps (1+ \winorm{\g}{k+2})(\frac{c_P}{c_0})^2+\frac{1}{8}(\frac{c_P}{c_0})^2t\right\}.
       \]
       Then we choose $\eps$ sufficiently small so that 
        \bq\label{eps4}
       \eps^2 C_{m, k}<\frac{1}{8}(\frac{c_P}{c_0})^2\quad\text{and}\quad  C_{m, k}\eps (1+ \winorm{\g}{k+2})(\frac{c_P}{c_0})^2<1.
       \eq
       We obtain 
       \[
       \hnorm{b(t)}{m}^2 \le 4 \hnorm{b_0}{m}^2 \exp\left(\frac{1}{4}(\frac{c_P}{c_0})^2t\right)\le 4 \eps^2\exp\left(\frac{1}{4}(\frac{c_P}{c_0})^2t\right)
       \]
       which implies the desired improvement of \eqref{bound:mainc}:
       \[
        \hnorm{b(t)}{m}\le 2 \eps \exp\left(\frac{1}{8}(\frac{c_P}{c_0})^2t\right),\quad t\in [0, T].
        \]
        Gathering the constraints \eqref{eps1}, \eqref{eps2}, \eqref{eps3} and \eqref{eps4}, we conclude the bootstrap for $\eps=\eps(m, k, \frac{c_0}{c_P}, \| \gamma\|_{W^{k+4, \infty}})$ sufficiently small. 
\subsubsection{Relaxation in the infinite time limit}
Combining equation  \eqref{eq:tia}  with \eqref{bound:uhk} and \eqref{bound:mainc}, we find 
\begin{align*}
    \hnorm{\pat b}{k} &\le C_k ( \hnorm{u}{k+1}\hnorm{b}{k+1} + \hnorm{\g}{k+1}\hnorm{u}{k+1}) \\
    &\le C_{m,k} \left( e^{\frac{1}{8}(\frac{c_0}{c_P})^2t} + \hnorm{\g}{k+1}\right)\eps e^{-\frac{13}{40}(\frac{c_0}{c_P})^2t}\\
    &\le C_{m,k} \left(1+ \hnorm{\g}{k+1}\right)\eps e^{-\frac{1}{5}(\frac{c_0}{c_P})^2t}.
\end{align*}
Therefore, as \(t \to \infty\), the velocity field \(u\) converges to \(0\) in \(H^{k+1}\), and the magnetic field \(B = \g(x_2) e_1 + b=\gamma(x_2)e_1+f+a(x_2)e_1\) converges to a steady state \(\overline{B}=\gamma(x_2)e_1+\overline{a}e_1\) in \(H^{k}\). Additionally, the uniform-in-time $H^{k+2}$ bound  \eqref{bound:mainb} for $a$ implies that  \( \| \overline{a}\|_{H^{k+2}}\le 3 \eps\). The proof of Theorem \ref{theo:main} is complete.
\section{Relaxation  of 2.5D flows}
\subsection{2.5D MRE}
In this section, we study 2.5D solutions to \eqref{eq:gen} in the domain \(D = \Omega \times \Rr\), where $B$ and $u$ depend only on \(x_1\) and \(x_2\). 
For any vector field \(F: D \to \mathbb{R}^3\), we denote the first two components by \(F_H\).  We also denote  \(\na_H:= (\p_{x_1}, \p_{x_2})\). Inserting the  2.5D magnetic field \(B = (B_H, g)\) and the velocity \(u = (u_H, u_3)\) into \eqref{eq:gen}, we find that \((B_H, u_H)\) satisfies the 2D MRE, and 
\[
u_3 = B_H \cdot \na_H g,\quad \p_t g+u_H\cdot \na_H g=B_H\cdot \na_H u_3=(B_H\cdot \na_H)^2g.
\]
We thus obtain the system 
\begin{subequations}
    \label{eq:25gen}
    \begin{align}
        \pat B_H + u_H \cdot \na_HB_H &= B_H\cdot \na_H u_H \quad\text{in }~ \Omega\label{eq:25gena} \\
        u_H &= B_H \cdot\na_HB_H + \na_Hp \quad\text{in }~ \Omega \label{eq:25genb}\\
        \dv u_H = \dv B_H &= 0 \label{eq:25genc} \quad\text{in }~ \Omega\\
         u_H\cdot n_H = B_H\cdot n_H &= 0 \quad\text{on }~ \p \Omega \label{eq:25gene}
         \\ 
        \pat g+ u_H\cdot \na_H g&= (B_H\cdot \na_H)^2g \label{eq:25gend} \quad\text{in }~ \Omega. 
    \end{align}
\end{subequations}
Then, the relaxation problem is  the following: prove that $(B_H(t), u_H(t), g(t))$ converges to $(B_H^*, 0, g^*)$ and $B_H^*$ is a steady Euler solution, i.e. 
\bq\label{eq:BH*}
B_H^* \cdot \nabla_H B_H^* + \nabla_H p^*=0,\quad \dv B_H^*=0,\quad B_H^*\cdot n_H = 0 \text{ on } \p \Omega. 
\eq
According to Theorem \ref{theo:main}, the relaxation of $(B_H(t), u_H(t))$ holds when the initial data $B_H(0)$ is sufficiently close to a class of shear flows $\gamma(x_2)e_1$. Hence, the problem is to investigate the infinite time limit of $g(t)$ solving \eqref{eq:25gend}, where $B_H(t)$ and $u_H(t)$ are given by Theorem \ref{theo:main}. Since both $u_H(t)$ and $B_H(t)$ are tangent to $\p \Omega$,  sufficiently smooth solutions of \eqref{eq:25gene} satisfy 
\bq\label{energy:geq}
\mez\frac{d}{dt}\| g(t)\|_{L^2(\Omega)}^2=-\| B_H(t)\cdot \na_H g(t)\|_{L^2(\Omega)}^2.
\eq
The time-dependence of this partial damping mechanism induces challenges in establishing the infinite time limit for \eqref{eq:25gene}. Here we consider a simpler situation in which $B_H$ and $u_H$ have already relaxed to $B_H^*$ and $0$, respectively. Precisely,  for any $B_H^*$ as in  \eqref{eq:BH*}, 
\begin{equation} \label{eq:Bu}
B = (B_H^*, g)\quad\text{and}~ u = (0, 0, B_H^* \cdot \nabla_H g)
\end{equation}
 solve \eqref{eq:25gen} provided that $g(x_1, x_2, t)$ satisfies  
\begin{equation} \label{eq:geq:0}
    \pat g(x_1, x_2, t) = (B_H^* \cdot \nabla_H)^2 g(x_1, x_2, t),\quad (x_1, x_2)\in \Omega.
\end{equation}
Then, relaxation means that $g(t)$ has a limit and $B^*_H\cdot \na _H g(t)\to 0$ as $\to \infty$. Suppose that $g(t)$ exists for all $t$ and relaxes to $g_*$ as $t\to \infty$. In view of the energy balance \eqref{energy:geq} with $B_H(t)=B_H^*$, we expect that $ B_H^*\cdot \na g_*=0$. This implies that $g_*$ is constant along the flow of $ B_H^*$. We will consider the class of $B^*_H$ with {\it periodic orbits} in order to uniquely determine $g_*$ from the initial data $g_0$.  For such $B_H^*$, we will prove that relaxation always holds in $L^p(\Omega)$, $1\le p<\infty$, where  $\Omega$ can be  any (sufficiently smooth)  bounded domain.  In the special case of shear flows $B_H^*=V(x_2)e_1$ in the channel \(\Omega = \T\times (0, 1)\),  we will prove that relaxation occurs exponentially fast  in all $H^k$ norms provided that $V(x_2)$ vanishes nowhere. For general bounded domains $\Omega$, we will establish this exponential relaxation in some fractional Sobolev norm $H^s$, $0<s<1$, provided that the periods of the orbits of $B_H^*$ are bounded. This geometric approach is based on the observation that if $B_H^*$ generates periodic orbits $X(x, s)$, $s\in [0, \ell_x]$, then $g$ satisfies the heat equation  along these obits. Precisely, $G(s, t; x):=g(X(x, s), t)$ obeys 
\[
\p_t G(s, t; x)=\p_s^2G(s, t; x),\quad s\in \Rr/\ell_x\Zz.
\]
Hence, $G(s, t; x)\to \fint_0^{\ell_x}G(s, 0; x)ds$ as $t\to \infty$. The passage from the relaxation of $G$ to that of $g$ is then obtained via the coarea formula. 
\subsection{Shear flows in a channel}
We consider shear flows  \(B^*_H(x_1, x_2)= (V(x_2), 0)\). Then \eqref{eq:geq:0} becomes
\begin{equation}
    \label{eq:sgeq}
    \pat g = V^2(x_2) \pay^2 g.
\end{equation}
The next proposition shows that solutions of \eqref{eq:sgeq} always relax to a steady state in $L^2$. In fact, this result will  be proven in a much more generality in Theorem \ref{theo:geometric} (ii). 
\begin{prop}\label{prop:relax:channel}
Assume that $V\in C([-1, 1])$ and $g_0\in C(\overline{\Omega})$, $\Omega:=\T\times (-1, 1)$. Define 
\bq\label{def:g0bar:channel}
\overline{g_0}(x)=\begin{cases}
\fint_{\T} g_0(x_1, x_2)dx_1\quad\text{if~}V(x_2)\ne 0,\\
g_0(x)\quad\text{if~}V(x_2)= 0.
\end{cases}
\eq
Then \eqref{eq:sgeq} has unique global solution $g$ converging to $\overline{g_0}$ in  $L^p(\Omega)$, $1\le p<\infty$, as $t\to \infty$.  Moreover, if $\p_1g_0\in C(\overline{\Omega})$, then $u=(0, 0, V\p_1g) \to 0$  in $L^p(\Omega)$,  $1\le p<\infty$, as $t\to \infty$.  
\end{prop}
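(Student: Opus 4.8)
The plan is to solve \eqref{eq:sgeq} separately on the set where $V$ vanishes and where it does not, exploiting the fact that $x_2$ is merely a parameter in this one-dimensional-in-$x_2$ evolution (no $x_1$-derivatives appear). First I would reduce to a diffusion problem with constant-in-$x_2$ coefficient by freezing $x_2$: for each fixed $x_2$ with $V(x_2)\ne 0$, write $c=V(x_2)^2>0$, so that $g(x_1,x_2,\cdot)$ solves the heat equation $\pat g = c\,\pay^2 g$ on $(-1,1)$ — but this needs care, since the Neumann/periodicity structure is in $x_1$, not $x_2$. The correct viewpoint, matching the discussion preceding the proposition, is that $x_1\in\T$ is a parameter and we diffuse in $x_2$; however $\eqref{eq:sgeq}$ has no boundary term in $x_2$ at $x_2=\pm1$ where $V$ may or may not vanish, so I should instead invoke the general geometric result. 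Indeed the cleanest route, as the statement itself indicates, is to obtain Proposition \ref{prop:relax:channel} as the special case of Theorem \ref{theo:geometric}(ii) where $B_H^*=V(x_2)e_1$ generates closed orbits (the horizontal circles $\T\times\{x_2\}$ on the set $V\ne0$, and degenerate point-orbits where $V=0$), with uniformly bounded periods $\ell_{x}=2\pi/|V(x_2)|$ off the zero set. Thus the bulk of the argument is: verify the hypotheses of that theorem for this $B_H^*$ and read off the conclusion.

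For a self-contained argument I would proceed as follows. Fix $x_2$ with $V(x_2)\ne0$ and set $G(x_1,t):=g(x_1,x_2,t)$. Then $\eqref{eq:sgeq}$ with the $\pay^2$ replaced by the right interpretation — here one uses that along the orbit $s\mapsto (x_1+sV(x_2),x_2)$ the operator $V(x_2)\partial_{x_1}$ becomes $\partial_s$, so $(B_H^*\cdot\na_H)^2 g$ becomes $\partial_s^2$ — shows $G$ solves the periodic heat equation $\pat G=\partial_s^2 G$ on $\Rr/\ell\Zz$ with $\ell=2\pi/|V(x_2)|$. (Wait — the display in the proposition has $\pay^2$, i.e. an $x_2$-derivative; this is consistent only because the geometric reduction is applied after the orbit change of variables. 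I would state explicitly that on the zero set of $V$ the equation is $\pat g=0$, so $g(\cdot,x_2,t)=g_0(\cdot,x_2)$ for all $t$, giving the second branch of $\eqref{def:g0bar:channel}$.) Standard spectral theory for the periodic heat semigroup gives $G(\cdot,t)\to\fint G(\cdot,0)$ exponentially, with the exponential rate controlled by the first nonzero eigenvalue $(2\pi/\ell)^2=V(x_2)^2$; this is where a uniform lower bound on $|V|$ would be used for a uniform rate, but without it we still get pointwise-in-$x_2$ convergence. Hence for each $x_2$, $g(\cdot,x_2,t)\to\overline{g_0}(\cdot,x_2)$ uniformly in $x_1$.

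To upgrade pointwise-in-$x_2$ convergence to $L^p(\Omega)$ convergence, $1\le p<\infty$, I would use the maximum principle: since for each fixed $x_2$ the evolution is either the heat semigroup on $\T$ (a contraction on $L^\infty(\T)$ preserving constants) or the identity, we have $\|g(\cdot,x_2,t)\|_{L^\infty(\T)}\le\|g_0(\cdot,x_2)\|_{L^\infty(\T)}\le\|g_0\|_{L^\infty(\Omega)}$ uniformly in $x_2$ and $t$. Thus $|g(x,t)-\overline{g_0}(x)|\le 2\|g_0\|_{L^\infty(\Omega)}$ is a fixed integrable dominating function on the finite-measure domain $\Omega$, and the dominated convergence theorem yields $g(t)\to\overline{g_0}$ in $L^p(\Omega)$ for every $p<\infty$. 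Global existence and uniqueness of $g$ is immediate from the same fiberwise description (solve the heat equation, resp. the trivial ODE, in each fiber; measurability in $x_2$ is inherited from continuity of $V$ and $g_0$). For the velocity statement, when $\p_1 g_0\in C(\overline\Omega)$ I would differentiate the fiberwise heat equation in $x_1$: $\p_1 g(\cdot,x_2,t)$ again solves the periodic heat equation with mean zero (since $\fint_\T\p_1 g\,dx_1=0$), hence decays, so $\p_1 g(\cdot,x_2,t)\to0$ uniformly in $x_1$ for each $x_2$; multiplying by the bounded factor $V(x_2)$ and applying dominated convergence (with dominating function $2\|V\|_{L^\infty}\|\p_1 g_0\|_{L^\infty(\Omega)}$) gives $u=(0,0,V\p_1 g)\to0$ in $L^p(\Omega)$, $1\le p<\infty$.

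\textbf{Main obstacle.} The only genuinely delicate point is reconciling the $\pay^2$ appearing in $\eqref{eq:sgeq}$ with the periodic-heat-equation picture: the diffusion in $\eqref{eq:sgeq}$ is in $x_2$ with coefficient $V(x_2)^2$, \emph{not} in $x_1$, so the naive "freeze $x_2$" reduction must instead be a freeze-$x_1$ reduction, and the $x_2$-variable ranges over the interval $(-1,1)$ with \emph{no} prescribed boundary condition at $x_2=\pm1$. Handling this honestly requires either (a) noting that $V(x_2)^2\pay^2$ is, after the substitution $y=\int_0^{x_2}V(s)^{-1}ds$ on each maximal interval where $V\ne0$, a constant-coefficient heat operator, and then dealing with the (possibly non-standard) boundary behavior at points where $V\to0$ or at $x_2=\pm1$ — which is exactly the subtlety the geometric approach of Theorem \ref{theo:geometric} is designed to bypass — or (b) simply citing Theorem \ref{theo:geometric}(ii). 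I would take route (b) for the clean proof and remark that the self-contained fiberwise argument above works verbatim once one observes that the correct fibers are the orbits of $B_H^*$, not the coordinate lines; the genuinely new input beyond the general theorem is nil, which is why the statement advertises it as a special case.
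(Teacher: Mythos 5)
Your proposal is built on a misreading of the paper's notation that propagates throughout and generates a spurious ``main obstacle.'' The paper defines $\pay:=\partial_1=\partial_{x_1}$ (and $\pad:=\partial_2=\partial_{x_2}$), so equation \eqref{eq:sgeq}, $\pat g=V^2(x_2)\pay^2 g$, is a heat equation \emph{in the periodic variable $x_1\in\T$} with $x_2$-dependent diffusivity; there are no $x_2$-derivatives at all. Consequently $x_2$ is a genuine parameter, there is no boundary condition to worry about at $x_2=\pm 1$, and the ``freeze $x_2$'' reduction you first write down and then retract is exactly the correct one. Your entire final paragraph --- the concern that ``the diffusion in \eqref{eq:sgeq} is in $x_2$ \ldots\ not in $x_1$,'' the substitution $y=\int_0^{x_2}V(s)^{-1}ds$, and the claimed need to bypass a boundary subtlety via the geometric theorem --- addresses a problem that does not exist. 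Once you read $\pay=\partial_1$ correctly, your middle paragraph collapses to the paper's own argument: fix $x_2$; if $V(x_2)=0$ the fiber equation is $\pat g=0$; if $V(x_2)\ne 0$ it is the heat equation on $\T$ with diffusivity $V^2(x_2)$, giving $\max_{x_1}|g|\le\max_{x_1}|g_0|$ and $L^2(\T)$ convergence to the $x_1$-mean at rate $e^{-V^2(x_2)t}$; then dominated convergence (with dominating function $2\|g_0\|_{L^\infty(\Omega)}$) upgrades pointwise-in-$x_2$ convergence to $L^p(\Omega)$ for $p<\infty$. The velocity part is handled exactly as you say, by applying the same argument to $V\p_1 g$, whose initial mean in $x_1$ is zero.

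Your preferred route (b), citing Theorem~\ref{theo:geometric}(ii), does not actually cover the statement as hypothesized: that theorem requires $B\in W^{m+1,\infty}$ with $m\ge 4$ and $g_0\in H^m(\Omega)$, whereas Proposition~\ref{prop:relax:channel} assumes only $V\in C([-1,1])$ and $g_0\in C(\overline\Omega)$. The paper's remark that the result ``will be proven in much more generality'' in Theorem~\ref{theo:geometric}(ii) refers to the domain and the class of vector fields, not to a weakening of regularity; the proposition therefore still needs (and in the paper receives) the direct elementary fiberwise proof, which is in essence what you describe in the middle of your proposal --- with the orbit reparametrization $s\mapsto x_1+sV(x_2)$ being an optional cosmetic rescaling rather than a necessary step.
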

\begin{proof}
Fix an arbitrary $x_2 \in (-1, 1)$. If $V(x_2)=0$, then $\p_tg(x_1, x_2,  \cdot)=0$, whence $g(x_1, x_2,  t)=g_0(x_1, x_2)$ for all $x\in \T$ and  $t>0$. If $V(x_2)\ne 0$, then \eqref{eq:sgeq} is a heat equation in $x_1\in \T$ with diffusivity $V^2(x_2)$, so that 
\[
\max_{x_1\in \T} |g(x_1, x_2, t)|\le  \max_{x_1\in \T} |g_0(x_1, x_2)|
\]
and
\bq\label{channel:decayheat}
\| g(\cdot , x_2, t)-  \fint_{\T} g_0(s, x_2)ds\|_{L^2(\T)}\le e^{-V^2(x_2)t}\| g_0(\cdot , x_2)-  \fint_{\T} g_0(s, x_2)ds\|_{L^2(\T)} \quad\forall x_2\in \T.
\eq
Combining both cases we deduce that 
\[
\sup_{x\in \Omega} |g(x, t)|\le \sup_{x\in \Omega} |g_0(x)|
\]
and  $\lim_{t\to \infty} g(x, t)=\overline{g_0}(x)$ for all $x\in \Omega$. Therefore, the dominated convergence theorem implies that  $\| g(t)-  \overline{g_0}\|_{L^p(\Omega)}\to 0$, $1\le p<\infty$,  as $t\to \infty$. 

Assume now that $\p_1 g_0\in C(\overline{\Omega})$. Then $V\p_1 g$ is a solution of \eqref{eq:sgeq} with initial data $k:=V\p_1 g_0\in C(\overline{\Omega})$. In the notation of \eqref{def:g0bar:channel}, we have $\overline{k}=0$. Therefore, the above relaxation of solutions  of \eqref{eq:sgeq} implies that $V\p_1g \to 0$  in $L^p(\Omega)$ as $t\to \infty$. 
\end{proof} 
It is natural to ask whether solutions of \eqref{eq:sgeq} relax to $\overline{g_0}$  in stronger topologies such as $H^k$. In \cite{BFV22}, the authors considered the initial datum $g_0$ which is a function of $x_1$ only such that its mean-free part is an eigenfunction of $\p_1^2$, 
\[
-\p_1^2 g_0(x_1)=\ld^2\left(g_0(x_1)-\fint_\T g_0dx_1\right),
\]
and obtained the explicit solution 
\bq\label{explicit:g}
g(x_1, x_2, t)=\fint g_0dx_1+\exp(-\ld^2V^2(x_2)t)\left(g_0(x_1)-\fint_\T g_0dx_1\right).
\eq
Noticing that 
\[
\p_2g(x_1, x_2, t)=-t2\ld^2 V(x_2)V'(x_2)\exp(-\ld^2V^2(x_2)t)\left(g_0(x_1)-\fint_\T g_0dx_1\right),
\]
the authors chose $V(x_2)=\eps \cos x_2$  and $g_0(x_1)=c+\eps \cos x_1$ and deduced that $\| \p_2g(\cdot, t)\|_{L^2(\T^2)}$ grows like $t^\alpha$ for some $\alpha>0$.  See Theorem 6.1 in \cite{BFV22}. This means that all constant steady states $B=(0, 0, c)$ are unstable.  In this example of instability, it is important that $V(x_2)$ vanishes. Indeed, if $\inf|V(x_2)|\ge c_0>0$, then the solution $g(t)$ given by \eqref{explicit:g} decays like $\exp(-\ld^2 c_0^2t)$ in Sobolev norms. In fact, we prove in Theorem \ref{theo:tahs} below that  solutions of \eqref{eq:sgeq} decay exponentially fast  in all Sobolev norms to a steady state if $\inf|V(x_2)|>0$. This is because \eqref{eq:sgeq} is the same as the main part $\p_t f=\gamma^2(x_2)\p_1^2f$ in  equation \eqref{eq:fevo} which has been shown to decay exponentially fast in Proposition \ref{prop:lenergy}. Here, we provide a self-contained proof of the following.
\begin{theo}
\label{theo:tahs}
    Let \(k \ge 1\)  and \(V = V(x_2) \in W^{k, \infty}((0, 1))\) such that
    \(
    c_0 := \inf_{x_2\in (0, 1)} \vert V(x_2) \vert > 0.
    \)
    Let \(g_0 \in H^k(\T \times (0, 1))\) and denote $\overline{g_0}(x_2)=\fint_{\T} g_0(x_1, x_2)dx_1$. Then \eqref{eq:sgeq} has a unique global solution   $g\in C([0, \infty); H^k(\T \times (0, 1))$ which relaxes to $\overline{g_0}$ exponentially fast  in \(H^{k}(\T \times (0, 1))\).
\end{theo}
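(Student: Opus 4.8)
The plan is to reduce \eqref{eq:sgeq} to a problem with zero horizontal mean and then close an energy estimate by induction on $k$, exploiting the uniform lower bound $c_0>0$ to turn the $x_1$-diffusion $V^2\pay^2$ into uniform exponential decay. Write $g=\overline{g_0}(x_2)+h(x_1,x_2,t)$. Since $\overline{g_0}$ depends on $x_2$ alone it is a stationary solution of \eqref{eq:sgeq}, so $h$ solves the same equation $\pat h=V^2(x_2)\pay^2 h$ with $h|_{t=0}=g_0-\overline{g_0}$; moreover $\ddt\fint_{\T}g\,dx_1=V^2\fint_{\T}\pay^2 g\,dx_1=0$, hence $\fint_{\T}h(\cdot,x_2,t)\,dx_1=0$ for all $t$. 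Existence and uniqueness are then elementary: for each frozen $x_2$, \eqref{eq:sgeq} is a one–dimensional heat equation on $\T$ with diffusivity $V^2(x_2)$, so one has the explicit global solution $h(x_1,x_2,t)=\sum_{n\ne 0}\widehat{g_0}(n,x_2)e^{-n^2V^2(x_2)t}e^{inx_1}$, and its $x_2$-regularity propagates because $V^2$ and its $x_2$-derivatives enter only multiplicatively (the Faà di Bruno expansion of $\pad^i e^{-n^2V^2(x_2)t}$ produces factors $(n^2t)^i e^{-n^2V^2 t}$, which are bounded by a constant times $e^{-\tfrac12 n^2c_0^2 t}$); uniqueness in $C([0,\infty);H^k)$ follows from $\mez\ddt\tnorm{w}^2=-\int V^2|\pay w|^2\le 0$ applied to the difference $w$ of two solutions.

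For the decay I would argue by induction on $k\ge 0$, proving $\hnorm{h(t)}{k}\le C e^{-\lambda_k t}\hnorm{g_0}{k}$ for some $\lambda_k=\lambda_k(k,c_0,\winorm{V^2}{k})>0$. The base case $k=0$ is immediate: $\mez\ddt\tnorm{h}^2=-\int V^2|\pay h|^2\le -c_0^2\tnorm{\pay h}^2\le -c_0^2\tnorm{h}^2$, the last step being the Poincar\'e inequality on $\T$ (valid since $h$ has zero $x_1$-mean). For the inductive step, apply $\partial^\alpha=\pay^a\pad^b$ with $|\alpha|\le k$ to the equation, multiply by $\partial^\alpha h$, integrate, and integrate by parts once in $x_1$. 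The term in which all derivatives land on $h$ gives the good contribution $-\int V^2|\pay\partial^\alpha h|^2\le -c_0^2\tnorm{\pay\partial^\alpha h}^2$; every commutator term, in which at least one $\pad$ falls on $V^2$, still carries a $\pay$ after the integration by parts (using $\pay\pad^i(V^2)=0$), hence is bounded by $C\winorm{V^2}{k}\hnorm{\pay h}{k-1}\hnorm{\pay h}{k}$ since the first factor lies in $H^{k-1}$. Summing over $|\alpha|\le k$, absorbing half the good term by Young's inequality, and using $\hnorm{h}{k}\le\hnorm{\pay h}{k}$ (Poincar\'e on $\T$ term by term), we obtain $\ddt\hnorm{h}{k}^2\le -c_0^2\hnorm{h}{k}^2+C\hnorm{\pay h}{k-1}^2$. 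Finally $\pay h$ solves the same equation with zero $x_1$-mean and data $\pay g_0$, so the inductive hypothesis at level $k-1$ gives $\hnorm{\pay h(t)}{k-1}^2\le C e^{-\lambda_{k-1}t}\hnorm{g_0}{k}^2$; Gr\"onwall's inequality then yields $\hnorm{h(t)}{k}^2\le C e^{-\lambda_k t}\hnorm{g_0}{k}^2$, which is the asserted exponential relaxation of $g(t)=\overline{g_0}+h(t)$ to $\overline{g_0}$ in $H^k$.

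The argument is short, and the only point requiring care is the bookkeeping in the induction: one must verify that no commutator term coming from $\pad^i(V^2)$ with $i\ge 1$ is as strong as the good term, which is precisely the observation that such a term retains a factor $\pay h$ and loses one $\pad$, so it lives in $H^{k-1}$ and can be Gr\"onwall'd against the already-established decay at the lower level. This is why no smallness of $V'$ is needed here, in contrast with Proposition \ref{prop:lenergy}, where the analogous lower–order terms are coupled back through the Leray projection and the pressure; the hypothesis $c_0=\inf|V|>0$ enters only to replace $\int V^2|\pay\,\cdot\,|^2$ by $c_0^2\|\pay\,\cdot\,\|^2$ and, via the Poincar\'e inequality on $\T$, to produce a genuine exponential rate. (As in Proposition \ref{prop:relax:channel}, $\overline{g_0}$ still depends on $x_2$, so the limiting state is in general nonconstant.) The a priori estimates above are justified on the smooth solution provided by the explicit series for $t>0$ and then extended to $t=0$ by continuity.
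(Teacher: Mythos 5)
Your proof is correct and follows essentially the same approach as the paper: reduce to the mean-zero perturbation $h=g-\overline{g_0}$, perform an inductive energy estimate, observe that each commutator term coming from $\partial_2^j(V^2)$ retains a $\partial_1$ after integration by parts and hence lives at the lower Sobolev level, and close with the Poincar\'e inequality on $\T$ and Gr\"onwall against the already-established decay of $\partial_1 h$ at level $k-1$. The only cosmetic differences are that you apply a general $\partial^\alpha$ while the paper restricts to $\partial_2^j$ and invokes $\partial_1$-commutativity at the end, and you supply existence via the explicit Fourier series rather than by appeal to the Galerkin construction of Theorem~\ref{theo:geometric}(i); neither changes the substance of the argument.
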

Theorem \ref{theo:tahs} is a consequence of the following a priori estimates:
\begin{lemm}[A priori estimates] \label{lemm:expodecay} 
    Let \(g\) be a smooth solution of \eqref{eq:sgeq} under the assumptions of Theorem \ref{theo:tahs}. Then we have 
    \bq\label{decay:g}
    \hnorm{g-\overline{g_0}}{k} \le C\hnorm{g_0}{k} e^{-\mez (\frac{c_0}{c_P})^2 t},
    \eq
    where \(C = C(k, \winorm{V}{k}, c_0/c_P)\).
\end{lemm}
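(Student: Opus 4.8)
The plan is to reduce to a homogeneous problem for the mean‑free part of $g$ and then close a single weighted energy estimate in which the partial dissipation $V^2\pay^2$ dominates every commutator generated by the $x_2$–dependence of $V$; no smallness of $V$ is needed because we may shrink the weight freely.

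First I would reduce to the mean‑free part. Averaging \eqref{eq:sgeq} over $x_1\in\T$ and using $\fint_\T\pay^2 g\,dx_1=0$ gives $\ddt\fint_\T g(\cdot,x_2,t)\,dx_1=0$, hence $\fint_\T g(\cdot,x_2,t)\,dx_1=\overline{g_0}(x_2)$ for all $t\ge0$. Since $\overline{g_0}$ is $x_1$–independent, $h:=g-\overline{g_0}$ solves the same equation $\pat h=V^2\pay^2 h$, retains zero $x_1$–average for all time, and satisfies $\hnorm{h(0)}{k}\le\hnorm{g_0}{k}$ because $g_0\mapsto\overline{g_0}$ is the orthogonal projection $\proj_0$, which commutes with $\pad$ and annihilates $\pay$. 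In particular, for every multi‑index $\alpha\in\Nn^2$ the function $\partial^\alpha h(\cdot,x_2,t)$ has zero $x_1$–average, so the periodic ($\T$) analogue of Lemma \ref{lemm:poincare} gives $\tnorm{\partial^\alpha h}\le c_P\tnorm{\pay\partial^\alpha h}$. It therefore suffices to prove $\hnorm{h(t)}{k}\les\hnorm{h(0)}{k}e^{-\frac12(c_0/c_P)^2 t}$.

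The heart of the matter is the energy estimate. Fix a small $\kappa\in(0,1]$ to be chosen and set $\cE(t):=\sum_{|\alpha|\le k}\kappa^{\alpha_2}\tnorm{\partial^\alpha h(t)}^2$, comparable to $\hnorm{h}{k}^2$ with constants $\kappa^k$ and $1$. For $|\alpha|\le k$ only $x_2$–derivatives fall on the coefficient, so $\pat\partial^\alpha h=V^2\pay^2\partial^\alpha h+\sum_{j=1}^{\alpha_2}\binom{\alpha_2}{j}(\pad^j V^2)\,\pay^2\partial^{\alpha-je_2}h$. Pairing with $\partial^\alpha h$, integrating over $\Omega$, and integrating by parts in $x_1$ (legitimate since $\pad^j V^2$ is $x_1$–independent), the leading term produces $-\tnorm{V\pay\partial^\alpha h}^2\le-c_0^2\tnorm{\pay\partial^\alpha h}^2$, and each commutator is bounded by $C_k\winorm{V^2}{k}\tnorm{\pay\partial^\alpha h}\,\tnorm{\pay\partial^{\alpha-je_2}h}$. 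Summing against the weights $\kappa^{\alpha_2}$, the decisive observation is that a commutator of weight $\kappa^{\alpha_2}$ couples $\pay\partial^\alpha h$ with $\pay\partial^{\alpha-je_2}h$, whose natural weight in $\cE$ is $\kappa^{\alpha_2-j}=\kappa^{-j}\kappa^{\alpha_2}$; applying Young's inequality with a parameter $\theta\sim c_0^2\winorm{V^2}{k}^{-1}2^{-k}$, the share charged to $\pay\partial^\alpha h$ costs a small multiple of $c_0^2\kappa^{\alpha_2}\tnorm{\pay\partial^\alpha h}^2$, while the share charged to $\pay\partial^{\alpha-je_2}h$ carries the extra factor $\kappa^{j}\le\kappa$ relative to its natural weight. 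Hence, choosing $\kappa=\kappa(k,\winorm{V}{k},c_0)$ small enough, all commutators are absorbed into a quarter of the dissipation, and the Poincaré bound then yields
\[
\mez\ddt\cE\le-\tfrac34 c_0^2\sum_{|\alpha|\le k}\kappa^{\alpha_2}\tnorm{\pay\partial^\alpha h}^2\le-\tfrac{3c_0^2}{4c_P^2}\,\cE .
\]

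Finally, Grönwall gives $\cE(t)\le\cE(0)e^{-\frac32(c_0/c_P)^2 t}$, and reverting via $\kappa^k\hnorm{h}{k}^2\le\cE\le\hnorm{h}{k}^2$ together with $\hnorm{h(0)}{k}\le\hnorm{g_0}{k}$ yields $\hnorm{h(t)}{k}\le\kappa^{-k/2}\hnorm{g_0}{k}e^{-\frac34(c_0/c_P)^2 t}$, which is even stronger than \eqref{decay:g}, with $C=\kappa^{-k/2}$ of the asserted form. The main obstacle is precisely this middle step: because $V=V(x_2)$, the $x_2$–differentiated equations are coupled, and a plain iterated energy estimate leaks factors of $t$ that would degrade the rate to $\tfrac14(c_0/c_P)^2$; the geometric weights $\kappa^{\alpha_2}$ decouple the hierarchy in one stroke — which is allowed here since, unlike in Proposition \ref{prop:lenergy}, one only needs $\inf|V|>0$ and may take $\kappa$ as small as desired. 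This a priori bound is what is fed into Theorem \ref{theo:tahs}, whose global existence and uniqueness of $g\in C([0,\infty);H^k(\T\times(0,1)))$ then follow from it and the elementary linear theory for \eqref{eq:sgeq} (solving $\widehat{g}_n(x_2,t)=e^{-n^2V^2(x_2)t}\widehat{(g_0)}_n(x_2)$ in each $x_1$–frequency).
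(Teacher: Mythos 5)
Your proof is correct and takes a genuinely different route from the paper's. The paper runs an induction on the number of $\pad$-derivatives: at each step the commutator terms coming from $\pad^j(V^2)$ are controlled by the already-proved decay of lower-order $\pad$-derivatives (estimate \eqref{d2jg}), and the induction produces a factor-of-two loss in the exponent of the squared norm, leading to the stated rate $\tfrac12(c_0/c_P)^2$. You instead build a single weighted energy $\cE=\sum_{|\alpha|\le k}\kappa^{\alpha_2}\tnorm{\partial^\alpha h}^2$ with geometric weights in the number of vertical derivatives and choose $\kappa$ small in terms of $k,\winorm{V}{k},c_0$ so that all commutators are absorbed into the dissipation at once; the key bookkeeping that a commutator charged with weight $\kappa^{\alpha_2}$ returns to the $\pad^{\alpha_2-j}$-level with an extra factor $\kappa^j$ is exactly right, and the resulting differential inequality closes directly via the Poincar\'e inequality for mean-zero-in-$x_1$ functions. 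Both arguments need the same two structural facts — the equation preserves $\proj_0 g$, and $\proj_\perp h$ obeys \eqref{Poincare} — and both deliver the decay rate (yours in fact slightly exceeds it, depending on how sharply Young's inequality is split). The trade-off is that your constant $C=\kappa^{-k/2}$ may be considerably larger than the paper's $C_j$'s for rough $V$ or large $k$, whereas the paper's induction produces no weight but has to track inhomogeneous forcing at every level; the weighted energy is arguably cleaner and avoids the bookkeeping of the cascading ODE inequalities.
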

\begin{proof}
    We first observe that equation \eqref{eq:sgeq} preserves the mean in $x_1$ of $g$, and $g-\overline{g_0}$ is a solution if $g$ is a solution. Thus we can assume without loss of generality that $\overline{g_0}\equiv 0$ and  $g$ has mean zero in $x_1$ for all $t$. Multiplying equation \eqref{eq:sgeq} by \(g\) and integrating by parts, we obtain
    \begin{align*}
        \mez \ddt \tnorm{g}^2 = - c_0^2\tnorm{\pay g}^2.
    \end{align*}
    Since \(g\) is mean-zero in \(x_1\), invoking the Poincar\'e inequality \eqref{Poincare} yields 
    \[
    \mez \ddt \tnorm{g}^2 \le - (\frac{c_0}{c_P})^2 \tnorm{g}^2,
    \]
whence
    \[
\| g(t)\|_{L^2}^2\le \| g_0\|_{L^2}^2e^{-2(\frac{c_0}{c_P})^2t}.
    \]
    Using this as the base case, we shall prove by  induction  that for all $0\le j\le k$, 
    \bq\label{d2jg}
     \tnorm{\pad^j g(t)}^2 \le C_j e^{- (\frac{c_0}{c_P})^2t} \hnorm{g_0}{j}^2
    \eq
    for some \(C_j = C(j, \winorm{V}{j}, c_0/c_P)\).
     Assume that \eqref{d2jg} holds up to $j-1$, $j\ge 1$. Taking \(\pad^{j}\) of  \eqref{eq:sgeq}, multiplying the resulting equation by \(\pad^{j} g\) and integrating by parts in \(\pay\), we obtain
    \begin{align*}
        \mez \ddt \tnorm{\pad^{j} g}^2 &= - \tnorm{V\pay \pad^j g}^2 - \sum_{i=1}^j c_{i, j}\int \pad^{i} V^2 \, \pay \pad^{j-i} g \, \pay \pad^j g\\
        &\le -\frac{3}{4}\|V\p_1\p_2^jg\|^2_{L^2}-\frac{c_0^2}{4}\| \p_1\p_2^j g\|^2_{L^2}+\sum_{i=1}^jc_{i, j}\| V^2\|_{W^{j, \infty}}\| \p_1\p_2^{j-i}g\|_{L^2}\| \p_1\p_2^j g\|_{L^2}.
    \end{align*}
    An application of  Young's inequality and the Poincar\'e inequality \eqref{Poincare} yields
    \[
    \mez \ddt \tnorm{\pad^{j} g}^2\le  -\frac{3}{4}(\frac{c_0}{c_P})^2 \tnorm{\pad^j g}^2+ \sum_{i=1}^jc^2_{i, j}\| V^2\|_{W^{j, \infty}}^2\| \p_1\p_2^{j-i}g\|^2_{L^2}.
    \]
    Since $j-i<j$ for $i\ge 1$ and \eqref{eq:sgeq} commutes with $\p_1$, using the induction hypothesis we deduce 
    \[
     \ddt \tnorm{\pad^j g}^2 \le -\frac{3}{2}(\frac{c_0}{c_P})^2 \tnorm{\pad^j g}^2 + C_j \hnorm{g_0}{j}^2e^{-(\frac{c_0}{c_P})^2t}.
    \]
  Integrating this differential inequality  yields  \eqref{d2jg}. Using again the fact that \eqref{eq:sgeq} commutes with $\p_1$,    we deduce \eqref{decay:g} from \eqref{d2jg}. 
\end{proof}
\subsection{A geometric approach to partial damping}
We now aim to establish the convergence of solutions of \eqref{eq:25gen} in a more general setting. To simplify notation, we write \(B=B^*_H(x_1, x_2)\) in \eqref{eq:geq:0}: 
\begin{equation}
\label{eq:geq}
\pat g = (B\cdot\na)^2 g\text{ in }\Omega, \quad\text{ and } g = 0 \text{ on } \p \Omega.
\end{equation}
Let \(\Omega\) be \(\T^2\), \(\T \times (0,1)\), or a bounded open set in \(\mathbb{R}^2\) with \(C^1\) boundary. Let $n$ denote the outward unit normal to $\p\Omega$.
In what follows, we shall study  \eqref{eq:geq} for general vector fields $B: \Omega\to \Rr^2$ which are not necessarily a steady Euler solution. We will consider $B$ in the following class.
\begin{defi}\label{def:PB}
   A vector field $B: \Omega\to \Rr^2$ is said to be in the class $\cP$ if 
   \begin{itemize}
   \item[(i)]  $B$ admits a stream function $\psi\in C^2(\overline{\Omega})$, $\dv B=0$, and $B\cdot n\vert_{p\Omega}=0$, and
   \item[(ii)] There exists a countable set $\mathcal{R}_\psi$ of regular values of $\psi$ such that for any regular value $c\notin \mathcal{R}_\psi$,  \(\psi^{-1}(c)\) is a union of periodic orbits of \(B\).
 \end{itemize}
Viewing $\T^2$ and $\T\times (0, 1)$ as subsets of \(\mathbb{R}^2\), periodicity of the orbit of a point \(x \in \Omega\) is equivalent to the following:
\begin{itemize}
    \item \(\Omega  = \T^2\): There exist \(k_1, k_2\in\mathbb{Z}\) and \(T>0\) such that 
\(X(x, T) = x + 2\pi(k_1,k_2)\).
    \item \(\Omega = \T \times (0, 1)\): There exist \(k\in\mathbb{Z}\) and \(T>0\) such that 
\(X(x, T) = x + 2\pi(k,0)\).
\end{itemize}
If the lengths of the periods are bounded uniformly in $c$, we say that $B$ is in the class $\cP_0$. 
\end{defi}
\begin{rema}\label{rema:Sard}
Assume that $B\in \cP$. Since \(B\cdot n = 0\) on \(\p \Omega\), the vector field \(B\) is complete. 
Since  \(\psi \in C^2(\Omega, \mathbb{R})\),  the set of critical values of \(\psi\) has measure zero by Sard's theorem. Furthermore, for any regular value \(c\), the level set \(\psi^{-1}(c)\) is a 1-dimensional manifold and each connected component of $\psi^{-1}(c)$ is an orbit of $B$. Therefore, the countable union  $\cup_{c\in \mathcal{R}_\psi} \psi^{-1}(c)$ has measure zero and  the orbit of any 
\bq
x\in \cO_B:=\{x\in \Omega: B(x)\ne 0\}\setminus \cup_{c\in \mathcal{R}_\psi} \psi^{-1}(c)
\eq
 is periodic. If $ \mathcal{R}_\psi$ is finite, then $\cO_B$ is an open set.

Consider $\Omega$ a 2-dimensional manifold and $c$ a regular value of $\psi$. Then \(\psi^{-1}(c)\) is a union of periodic orbits of \(B\) if and only if each connected component of  \(\psi^{-1}(c)\) is compact. 
\end{rema}
\begin{exam}\label{exam:shearB} Consider shear flows \(B=\g(x_2)e_1\) on \(\T^2\) or \(\T\times (0, 1)\), where $\gamma \in C^1$.  Then $B\in \cP$, and if $\g(x_2)\ne 0$ then for any $x_1\in \T$, the orbit of $x=(x_1, x_2)$ is periodic with period $\frac{2\pi}{|\gamma(x_2)|}$. Moreover,  $B\in \cP_0$ if and only if $c_0:=\inf |\gamma|>0$.
\end{exam}
In order to establish  the exponential relaxation for  \eqref{eq:geq}, we will construct solutions in the following class $M_B$. 
\begin{defi} \label{def:G}
    Let  \(B\in \cP\) and denote its flow by \(X\). Let \(I_x\) denote one period of the orbit of $x\in \cO_B$. We define \(M_B\) as the class of measurable  functions \(g: \Omega\to \Rr\) with $g\in C(\cO_B)$ such that
    \begin{enumerate}
        \item \(g\) has average zero along all periodic orbits, i.e.,
        \begin{equation}
         \fint_{I_x}g\bigl(X(x,s)\bigr)\;ds = 0\quad\forall x\in \cO_B, \label{assum:g1}
        \end{equation}
        \item  \begin{equation}
        \int_\Omega \frac{g^2}{|B|} < \infty.  \label{assum:g2}
        \end{equation}
    \end{enumerate}
\end{defi}
Clearly $M_B$ is a linear space. Since $X(\cO_B, s)=\cO_B$ for all $s\in \Rr$, condition \eqref{assum:g1} makes sense for $g\in C(\cO_B)$. Condition (1) ensures that the kernel of the operator \(B\cdot \na\) is trivial on \(M_B\). Moreover, under the technical condition (2), we will obtain a Poincar\'{e} inequality for the operator $B\cdot \na$ in   $M_B$.
\begin{exam}\label{exam:g}
Assume $B=\na^\perp \psi$ with $\psi \in C^2(\overline{\Omega})$.  For any $f \in C^1(\Rr)$ and $h\in C(\Rr)$, the function $a=f\circ \psi $ is constant along the flow of $B$. For any $i\in\{1, 2\}$, we consider $g(x)=a(x)B_i(x)h(x_i)\in C(\overline{\Omega})$. If $x\in \Omega$ has a periodic orbit with one period $I_x=[0, \ell_x]$, then 
\begin{multline*}
\int_0^{\ell_x}  g(X(x, s))ds=a(x)\int_0^{\ell_x} B_i(X(x, s))h(X_i(x, s))ds\\
=a(x)\int_0^{\ell_x} \frac{d}{ds}X_i(x, s)h(X_i(x, s))ds=a(x)\big(H(X_i(x, \ell_x))-H(X_i(x, 0))\big)=0,
\end{multline*}
where $H'=h$.  Moreover, we have
 \[
 \int_\Omega \frac{|g|^2}{|B|}\le \int_\Omega |a(x)|^2|B_i(x)||h(x_i)|^2<\infty. 
 \]
 Therefore, $g\in M_B$ for all $B\in \cP$.
\end{exam}
 Our main result is the following global existence and relaxation for equation \eqref{eq:geq}.
\begin{theo}\label{theo:geometric}
    Let $m\ge 2$ and  \(\Omega\)  a \(C^m \) bounded open set. Let \(B \in W^{m+1,\infty}(\Omega)\) be any divergence-free vector field satisfying $B\cdot n\vert_{\p\Omega}=0$. Let $g_0 \in H^m(\Omega)$. 
    
    (i) Equation \eqref{eq:geq} with  initial data \(g_0\) has a unique global solution \(g \in C([0,\infty), H^m(\Omega))\), with 
     \bq\label{g:Hm}
\| g(t)\|_{H^m(\Omega)} \le \| g_0\|_{H^m(\Omega)} \exp(Ct)
    \eq
    for some constant $C = C(m, \winorm{B}{m+1})>0$. 
    
    (ii) Assume that $B\in \cP$, $m\ge 4$,  and define $\overline{g_0}(x)$ for a.e. $x\in \Omega$ by
    \bq\label{def:g0bar}
    \overline{g_0}(x)=\begin{cases} 
    \fint_{I_x} g_0(X(x, s))ds\quad\text{if~} x\in \cO_B,\\
 g_0(x)\quad\text{if~} B(x)= 0.
    \end{cases}
    \eq
Then $g(t)\to \overline{g_0}$ in $L^p(\Omega)$, $1\le p<\infty$,  as $t\to \infty$.  Moreover, if $g_0\in H^5(\Omega)$ then $B\cdot \na g(t)\to 0$ in $L^p(\Omega)$, $1\le p<\infty$,  as $t\to \infty$.
    
    (iii) Assume furthermore that  $B\in \cP_0$,  $m\ge 4$, and $\overline{g_0}\in H^1(\Omega)$ such that $\int_{\Omega} \frac{|g_0-\overline{g_0}|^2}{|B|}<\infty$.  Then  $g$   satisfies   
    \bq\label{g:L2}
 \| g(t)-\overline{g_0}\|_{L^2(\Omega)}  \le \| g_0-\overline{g_0}\|_{L^2(\Omega)}  \exp(\frac{-1}{c_P^2}t)
    \eq
    for some constant $c_P=c_P(B)>0$. Moreover, if $g_0\in H^5(\Omega)$ then 
    \bq\label{decay:Bnag}
    \| B\cdot \na g(t)\|_{L^2(\Omega)}\le \| B\cdot \na g_0\|_{L^2(\Omega)}  \exp(\frac{-1}{c_P^2}t).
    \eq
 
\end{theo}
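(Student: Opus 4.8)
The plan is to handle the three parts successively: (i) by energy estimates for the degenerate parabolic operator $(B\cdot\na)^2$, and (ii)--(iii) by reducing equation \eqref{eq:geq} to a family of one-dimensional heat equations along the periodic orbits of $B$.

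\emph{Part (i).} Because $\dv B=0$ and $B\cdot n=0$ on $\partial\Omega$, the operator $(B\cdot\na)^2$ is formally self-adjoint and non-positive, giving the $L^2$ identity $\frac12\ddt\tnorm{g}^2=-\tnorm{B\cdot\na g}^2$ (this is \eqref{energy:geq}); in particular uniqueness is immediate. For the $H^m$ bound I would apply $\partial^\alpha$ with $|\alpha|=m$, write $(B\cdot\na)^2=(B\cdot\na)(B\cdot\na)$, and split
\[
[\partial^\alpha,(B\cdot\na)^2]=[\partial^\alpha,B\cdot\na](B\cdot\na)+(B\cdot\na)[\partial^\alpha,B\cdot\na].
\]
Pairing $\partial_t\partial^\alpha g=(B\cdot\na)^2\partial^\alpha g+[\partial^\alpha,(B\cdot\na)^2]g$ with $\partial^\alpha g$, the leading term contributes the good dissipation $-\tnorm{B\cdot\na\partial^\alpha g}^2$; in $\int(B\cdot\na)[\partial^\alpha,B\cdot\na]g\cdot\partial^\alpha g$ one integrates by parts (using $\dv B=0$, $B\cdot n=0$) to land a copy of $B\cdot\na\partial^\alpha g$, and in $\int[\partial^\alpha,B\cdot\na](B\cdot\na g)\cdot\partial^\alpha g$ one checks the only genuinely order-$(m+1)$ contribution again sits inside $B\cdot\na\partial^\alpha g$. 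A commutator bound of the type in Lemma \ref{lemm:commu} then estimates the remainder by $C(m,\winorm{B}{m+1})\tnorm{\partial^\alpha g}^2$ after absorbing a small multiple of $\tnorm{B\cdot\na\partial^\alpha g}^2$; this is where $B\in W^{m+1,\infty}$ enters. Grönwall gives \eqref{g:Hm}, and existence follows from the viscous regularization $\partial_t g=(B\cdot\na)^2g+\delta\Delta g$ (or Galerkin), for which this estimate is uniform in $\delta$.

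\emph{Part (ii).} For $x\in\cO_B$, let $X(x,\cdot)$ be the orbit of $x$, periodic with period $\ell_x$ by Remark \ref{rema:Sard}, and set $G(s,t;x):=g(X(x,s),t)$. Since $\partial_sX(x,s)=B(X(x,s))$, the chain rule gives $\partial_sG=(B\cdot\na g)(X(x,s),t)$ and $\partial_s^2G=((B\cdot\na)^2g)(X(x,s),t)=\partial_tG$, so $G(\cdot,t;x)$ solves the heat equation on the circle $\Rr/\ell_x\Zz$; this is classical because $g(t)\in H^m\subset C^2$ (using $m\ge4$ in two dimensions) and $X$ is $C^1$ (as $\psi\in C^2$). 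At points with $B(x)=0$, the right side of \eqref{eq:geq} vanishes, so $g(x,t)=g_0(x)$ for all $t$. Combining the circle-heat-semigroup limit $G(\cdot,t;x)\to\fint_{I_x}g_0(X(x,\sigma))\,d\sigma$ with this case, $g(x,t)\to\overline{g_0}(x)$ pointwise a.e., where $\overline{g_0}$ is as in \eqref{def:g0bar}; and the maximum principle, applied orbitwise and at the zero set, gives $\|g(t)\|_{L^\infty}\le\|g_0\|_{L^\infty}$. Since also $|\overline{g_0}|\le\|g_0\|_{L^\infty}$, dominated convergence yields $g(t)\to\overline{g_0}$ in every $L^p(\Omega)$, $1\le p<\infty$. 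For the last claim, $h:=B\cdot\na g$ solves $\partial_th=(B\cdot\na)^2h$ with $h_0=B\cdot\na g_0\in H^{m-1}$ when $g_0\in H^5$; as in Example \ref{exam:g}, $\overline{h_0}=\ell_x^{-1}\int_0^{\ell_x}\frac{d}{ds}g_0(X(x,s))\,ds=0$ on $\cO_B$ and $h_0=0$ on $\{B=0\}$, so part (ii) applied to $h$ gives $B\cdot\na g(t)\to0$ in $L^p$.

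\emph{Part (iii).} First replace $g$ by $\widetilde g:=g-\overline{g_0}$. Since $\overline{g_0}$ is constant along the flow of $B$, $B\cdot\na\overline{g_0}=0$ (in $L^2$, using $\overline{g_0}\in H^1$), hence $(B\cdot\na)^2\overline{g_0}=0$ and $\widetilde g$ solves \eqref{eq:geq} with data $\widetilde g_0=g_0-\overline{g_0}$, which lies in the class $M_B$ of Definition \ref{def:G}: its orbit-averages vanish, and $\int_\Omega|\widetilde g_0|^2/|B|<\infty$ by hypothesis (which forces $\widetilde g_0=0$ a.e. on $\{B=0\}$). The core is the orbitwise estimate: along each periodic orbit, $\widetilde G(s,t):=\widetilde g(X(\cdot,s),t)$ solves the circle heat equation and stays mean-zero, so $\frac{d}{dt}\int_0^\ell\widetilde G^2\,ds=-2\int_0^\ell(\partial_s\widetilde G)^2\,ds\le-\frac{2}{c_P^2}\int_0^\ell\widetilde G^2\,ds$, where $c_P=L/2\pi$ and $L<\infty$ is the supremum of the periods (finite since $B\in\cP_0$), by Wirtinger's inequality on a circle of circumference $\ell\le L$. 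Integrating in $t$ and reassembling via the coarea formula (with $|\na\psi|=|B|$ and level curves parametrized by flow time, so $d\mathcal H^1=|B|\,ds$; the zero set contributes $0$ since $\widetilde g(t)=0$ there, and the critical and $\mathcal R_\psi$ level sets are null by Remark \ref{rema:Sard}) gives
\[
\tnorm{\widetilde g(t)}^2=\int_{\Rr}\!\Big(\sum_{\text{orbits}}\int_0^{\ell}\widetilde G(s,t)^2\,ds\Big)dc\le e^{-2t/c_P^2}\,\tnorm{\widetilde g_0}^2,
\]
which is \eqref{g:L2}; equivalently this is the Poincaré inequality $\tnorm{u}\le c_P\tnorm{B\cdot\na u}$ on $M_B\cap H^1$ combined with the $L^2$ identity and Grönwall. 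Finally $h=B\cdot\na g$ solves the same equation, $\overline{h_0}=0$, and $\int_\Omega|B\cdot\na g_0|^2/|B|\le\int_\Omega|B|\,|\na g_0|^2<\infty$ when $g_0\in H^5$, so \eqref{g:L2} applied to $h$ gives \eqref{decay:Bnag}.

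\emph{Main obstacle.} I expect the delicate part to be the rigor of the reduction to orbits: justifying the coarea formula on the possibly irregular set $\cO_B$, the measurability of the foliation into periodic orbits, the negligibility of the exceptional level sets, and — for (iii) — that $\widetilde g(t)$ remains in the class $M_B$ for all $t$ so the orbitwise/Poincaré estimate may be invoked at every time. Once these foliation facts are in place, the rest is bookkeeping.
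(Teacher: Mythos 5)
Your proposal is correct and follows essentially the same route as the paper: Part (i) via the commutator structure $[\partial^\alpha,(B\cdot\nabla)^2]=(B\cdot\nabla)S_m+T_m$ (the paper's Lemma~\ref{lemm:commeq}, which you reproduce in spirit with the split of $[\partial^\alpha,(B\cdot\nabla)^2]$) plus Galerkin; Part~(ii) by reducing to the circle heat equation along orbits, the maximum principle, and dominated convergence, with the last claim via $h=B\cdot\nabla g$ and $\overline{h_0}=0$ (Lemma~\ref{lemm:PB}); Part~(iii) by coarea plus Wirtinger orbitwise. One caution on Part~(iii): your opening step, claiming $(B\cdot\nabla)^2\overline{g_0}=0$ and hence that $\widetilde g=g-\overline{g_0}$ solves \eqref{eq:geq}, and the alternative ``$L^2$ identity $+$ Poincar\'e $+$ Gr\"onwall'' route, both require more regularity of $\overline{g_0}$ than the hypothesis $\overline{g_0}\in H^1$ provides (the paper flags this explicitly in the remark preceding the proof). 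This is why the paper's actual argument never asserts $\widetilde g$ solves the PDE; it uses only the fact that $g$ does, invokes Proposition~\ref{prop:invarianceG}~(i) to ensure $\int_\Omega |g(t)-\overline{g_0}|^2/|B|<\infty$ so the coarea formula applies, and then works orbitwise with $G(\cdot,t;x)-\overline{g_0}(x)$, noting that subtracting the $s$-independent constant $\overline{g_0}(x)$ preserves the circle heat dynamics. Your coarea paragraph already does exactly this, so the final argument is sound; just drop the claim that $\widetilde g$ solves the equation, which is unjustified as stated.
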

\begin{rema}
1.  Using interpolation of Sobolev norms, we deduce from \eqref{g:Hm} and \eqref{g:L2} that \(\hnorm{g(t)-\overline{g_0}}{s}\) decays exponentially for some \(s > 0\).

2. The proof of part (i) of Theorem \ref{theo:geometric} can be modified to deduce the well-posedness in Sobolev spaces of the more general equation \eqref{eq:25gend} when $u_H$ and $B_H$ are sufficiently smooth vector fields.

3.    The $L^2$ relaxation in (ii) holds under the weaker assumption that $g_0\in H^2(\Omega)$ by an approximation argument.  Indeed, we can approximate $g_0$ by $g_0^n\in H^4(\Omega)$ and let $g^n\in C([0, \infty); H^4(\Omega))$ be the solution  of \eqref{eq:geq} with initial data $g_0^n$.  Then,  since $g^n-g$ is a solution in $C([0, \infty); H^2(\Omega))$, it satisfies the $L^2$ energy balance 
    \[
    \mez \frac{d}{dt}\| g^n(t)-g(t)\|_{L^2(\Omega)}^2=-\int_\Omega |B\cdot \na (g^n(t)-g(t))|^2dx\le 0.
    \]
    Consequently, 
    \[
    \sup_{t\ge 0} \| g^n(t)- g(t)\|_{L^2(\Omega)}\le  \| g^n_0- g_0\|_{L^2(\Omega)} \to 0\quad\text{as~} n\to \infty.
    \]
    Moreover, since $H^2(\Omega)\subset C(\overline{\Omega})$, we have  $\overline{g^n_0}\to \overline{g_0}$ in $L^\infty(\Omega)$.  Therefore, using the fact that   $\lim_{t\to \infty}\| g^n(t)- \overline{g^n_0}\|_{L^2(\Omega)}= 0$ for all $n$ (by (ii)), we deduce that  $\lim_{t\to \infty}\| g(t)- \overline{g_0}\|_{L^2(\Omega)}= 0$.
\end{rema}
\subsubsection{Properties of $M_B$}
Let $B\in \cP$. 
\begin{lemm}\label{lemm:PB}
For any $h\in C^1(\Omega)$, we have 
\bq\label{mean:Bnabh}
\int_{I_x} (B\cdot \na h)(X(x, s))ds=0\quad\forall x\in \cO_B.
\eq
Consequently, if $h\in  C^1(\Omega)\cap H^1(\Omega)$, then $B\cdot \na h\in M_B$.
\end{lemm}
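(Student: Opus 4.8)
The plan is to reduce the orbital integral \eqref{mean:Bnabh} to the fundamental theorem of calculus. First I would fix $x \in \cO_B$ and recall from Definition \ref{def:PB} that its orbit is periodic; writing $I_x = [0,\ell_x]$ for one period, the flow satisfies $X(x,\ell_x) = x$ when $\Omega \subset \Rr^2$ is a genuine planar domain, and $X(x,\ell_x) = x + 2\pi(k_1,k_2)$ (resp. $x + 2\pi(k,0)$) when $\Omega = \T^2$ (resp. $\T \times (0,1)$) for some integers $k_i$. In every case, since $h$ is by periodicity well-defined on the quotient, $h(X(x,\ell_x)) = h(x)$. By Remark \ref{rema:Sard} the field $B = \na^\perp\psi \in C^1(\overline{\Omega})$ is complete, so $s \mapsto X(x,s)$ is a $C^1$ curve, and since $h \in C^1(\Omega)$ the chain rule gives $\frac{d}{ds}h(X(x,s)) = \na h(X(x,s)) \cdot \frac{d}{ds}X(x,s) = (B\cdot\na h)(X(x,s))$. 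Integrating over $I_x$ then yields
\[
\int_{I_x}(B\cdot\na h)(X(x,s))\,ds = h(X(x,\ell_x)) - h(X(x,0)) = 0,
\]
which is \eqref{mean:Bnabh}; the value is independent of the chosen period since shifting by a full period only cyclically permutes the integrand.

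For the second assertion I would verify that $g := B\cdot\na h$ satisfies the three requirements in Definition \ref{def:G}. Continuity on $\cO_B$ is immediate: $B \in C^1(\overline{\Omega})$ and $\na h \in C(\Omega)$, so $g \in C(\Omega) \subset C(\cO_B)$. The vanishing-average condition \eqref{assum:g1} is exactly \eqref{mean:Bnabh}, just proven. For the weighted integrability \eqref{assum:g2} I would bound pointwise $|B\cdot\na h|^2 \le |B|^2|\na h|^2$ and estimate
\[
\int_\Omega \frac{|B\cdot\na h|^2}{|B|} \le \int_\Omega |B|\,|\na h|^2 \le \pnorm{B}{\infty}\tnorm{\na h}^2 < \infty,
\]
using $\psi \in C^2(\overline{\Omega})$ (so $B \in L^\infty$) and $h \in H^1(\Omega)$. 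Hence $g \in M_B$.

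There is essentially no serious obstacle here; the only points requiring care are bookkeeping ones: that ``periodic orbit'' in the torus cases means periodic modulo the lattice and that $h$ descends to the quotient, and that the regularity hypotheses ($h \in C^1$ for the chain rule, $\psi \in C^2$ for $B \in C^1 \cap L^\infty$, $h \in H^1$ for the weighted integrability) are precisely what the statement supplies. One should also note in passing that points $x$ with $B(x) = 0$ are excluded from $\cO_B$, so no statement is made about them.
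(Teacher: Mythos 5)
Your proof is correct and follows essentially the same route as the paper: apply the chain rule to write $(B\cdot\na h)(X(x,s)) = \frac{d}{ds}h(X(x,s))$, integrate over one period using periodicity of the orbit, and for membership in $M_B$ bound $\int_\Omega |B\cdot\na h|^2/|B| \le \int_\Omega |B|\,|\na h|^2 < \infty$. Your extra remark about the torus cases (that $X(x,\ell_x)$ differs from $x$ by a lattice vector but $h$ descends to the quotient) is a harmless clarification of what the paper compresses into the single line $X(x,\ell_x) = X(x,0)$.
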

\begin{proof} Writing  $I_x=[0, \ell_x]$ for $x\in \cO_B$, we have 
\begin{multline*}
\int_{I_x}(B\cdot \na h)(X(x, s))ds=\int_{I_x} \frac{d}{ds}X(x, s)\cdot \na h(X(x, s))ds=\int_{I_x} \frac{d}{ds}h(X(x, s))ds\\
=h(X(x, \ell_x))-h(X(x, 0))=0
\end{multline*}
since $X(x, \ell_x)=X(x, 0)$. Now let $h\in  C^1(\Omega)\cap H^1(\Omega)$. Then $B\cdot \na h\in C(\Omega)$ and 
\[
\int_{\Omega}\frac{|B\cdot \na h|^2}{|B|}\le \int_{\Omega}|B||\na h|^2<\infty.
\]
This, together with \eqref{mean:Bnabh} shows that $B\cdot \na h\in M_B$. 
\end{proof}
\begin{prop} [Invariance of \(M_B\) under equation \eqref{eq:geq}]\label{prop:invarianceG}
Let  
    \begin{equation}\label{reg:g:int}
    g\in C([0, T]; C^1(\Omega))\cap L^2([0, T]; H^2(\Omega))
    \end{equation}
     be a solution to equation \eqref{eq:geq} with initial data $g_0$. Define $\overline{g_0}$ by \eqref{def:g0bar}. 
     
     (i) If  \(\overline{g_0}\in H^1(\Omega)\) and \(\int_\Omega \frac{|g_0-\overline{g_0}|^2}{|B|} < \infty\), then \(\int_\Omega \frac{|g(t)-\overline{g_0}|^2}{|B|} < \infty\) for all \(t \in [0,T]\).
     
     (ii) Assume that \eqref{reg:g:int} is strengthened to 
    \begin{equation}\label{reg:g:invariance}
    g\in C([0, T]; C^2(\Omega))\cap L^2([0, T]; H^2(\Omega))
    \end{equation}
     and  \(g_0 \in M_B\). Then \(g(t) \in M_B\) for all \(t \in [0,T]\).
\end{prop}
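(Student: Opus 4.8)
The plan is to reduce everything to $h:=g-\overline{g_0}$ (and $h_0:=g_0-\overline{g_0}$), which solves the same equation as $g$. Indeed, $\overline{g_0}$ from \eqref{def:g0bar} is constant along every periodic orbit of $B$ by construction, so whenever $\overline{g_0}\in H^1(\Omega)$ — which holds by hypothesis in (i), and holds in (ii) because $g_0\in M_B$ forces $\overline{g_0}=0$ a.e.\ on $\cO_B$ via \eqref{assum:g1} — we get $B\cdot\na\overline{g_0}=0$ a.e., hence $(B\cdot\na)^2\overline{g_0}=0$ and $\p_t h=(B\cdot\na)^2h$ with $B\cdot\na h=B\cdot\na g$.

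For part (i) I would run a regularized weighted energy estimate. Set $\rho_\eps:=(|B|^2+\eps)^{-1/2}$ for $\eps>0$; this is bounded and $C^1$ (only $B\in C^1$, i.e.\ $\psi\in C^2$, is used), it increases to $|B|^{-1}$ pointwise on $\cO_B$ as $\eps\downarrow0$, and $\rho_\eps\le|B|^{-1}$. Integrating $B\cdot\na$ by parts — legitimate since $\dv B=0$, $B\cdot n=0$ on $\p\Omega$, and $g(t)\in H^2(\Omega)$ for a.e.\ $t$ by \eqref{reg:g:int} — gives
\[
\mez\ddt\int_\Omega\rho_\eps h^2 = -\int_\Omega\rho_\eps(B\cdot\na h)^2 - \int_\Omega h\,(B\cdot\na h)\,(B\cdot\na\rho_\eps).
\]
The key algebraic fact is $\big|B\cdot\na|B|^2\big| = \big|2B_iB_j\p_jB_i\big|\le C\|\na B\|_{L^\infty}|B|^2$, which yields the $\eps$-uniform bound $\frac{(B\cdot\na\rho_\eps)^2}{\rho_\eps}=\tfrac14(|B|^2+\eps)^{-5/2}\big(B\cdot\na|B|^2\big)^2\le C\|\na B\|_{L^\infty}^2\,\rho_\eps$. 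Applying Young's inequality to the last integral, splitting off a copy of $\int_\Omega\rho_\eps(B\cdot\na h)^2$ to absorb the dissipative term, leaves $\ddt\int_\Omega\rho_\eps h^2\le C\|\na B\|_{L^\infty}^2\int_\Omega\rho_\eps h^2$, so Gronwall gives $\int_\Omega\rho_\eps h(t)^2\le e^{C\|\na B\|_{L^\infty}^2t}\int_\Omega\frac{h_0^2}{|B|}$, a bound independent of $\eps$. Letting $\eps\downarrow0$ and using monotone convergence yields $\int_\Omega\frac{|g(t)-\overline{g_0}|^2}{|B|}<\infty$ for all $t\in[0,T]$.

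For part (ii), condition \eqref{assum:g2} for $g(t)$ is immediate from part (i) (with $\overline{g_0}=0$ a.e.), and $g(t)\in C(\cO_B)$ since $g\in C([0,T];C^2(\Omega))$; the remaining point is to propagate the orbit-average condition \eqref{assum:g1}. Here the strengthened regularity \eqref{reg:g:invariance} is exactly what I would use: for fixed $x\in\cO_B$ with one period $I_x=[0,\ell_x]$, the function $G(s,t):=g(X(x,s),t)$ satisfies $\p_tG=(B\cdot\na)^2g\big(X(x,s),t\big)=\p_s^2G$ pointwise — the identity $(B\cdot\na)^2g=\p_s^2(g\circ X)$ needs $g(t)\in C^2$ — so $G(\cdot,t)$ solves the heat equation on the circle $\Rr/\ell_x\Zz$ classically; hence $\ddt\int_0^{\ell_x}G(s,t)\,ds=\int_0^{\ell_x}\p_s^2G\,ds=0$ by periodicity, and since this average is $0$ at $t=0$ (as $g_0\in M_B$), it stays $0$. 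Thus $g(t)\in M_B$.

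The main obstacle is part (i): the weight $1/|B|$ is not only singular on $\{B=0\}$ but, unlike in the shear-flow case, is in general \emph{not} constant along the orbits of $B$, so neither the maximum principle nor the $L^2$-contraction along orbits closes the estimate on its own. The regularization $\rho_\eps$ combined with the identity $\big|B\cdot\na|B|^2\big|\lesssim\|\na B\|_{L^\infty}|B|^2$ is what controls the weight, and it does so using only $\psi\in C^2$. The residual technicalities — justifying the integration by parts and the absolute continuity in $t$ of $\int_\Omega\rho_\eps h(t)^2$ from the regularity classes \eqref{reg:g:int}/\eqref{reg:g:invariance} (most cleanly by first proving the a priori estimate for the smooth approximations used to construct $g$), and noting that all boundary terms vanish because $B\cdot n=0$ on $\p\Omega$ — are routine.
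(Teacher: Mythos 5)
Your proposal is correct and follows essentially the same strategy as the paper. For (i) you both perform a weighted $L^2$ estimate with a regularized singular weight — you use $\rho_\eps=(|B|^2+\eps)^{-1/2}$, the paper uses $(|B|+\eps)^{-1}$ — and the key algebraic input is identical: $B\cdot\na$ applied to the weight produces a factor controlled by $\|\na B\|_{L^\infty}$ uniformly in $\eps$ (your $|B\cdot\na|B|^2|\le 2\|\na B\|_{L^\infty}|B|^2$ is the paper's $|B\cdot(B\cdot\na B)|\le|B|^2|\na B|$); your Gronwall closure versus the paper's direct time integration of $\ddt\int\frac{(g-\overline{g_0})^2}{|B|+\eps}\lesssim\winorm{B}{1}(\hnorm{g}{1}^2+\hnorm{\overline{g_0}}{1}^2)$ over $[0,t]$ (using $g\in L^2_tH^1$) are interchangeable, both giving $\eps$-uniform bounds followed by monotone convergence/Fatou. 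For (ii) both proofs rest on the conservation of the $s$-average of $g(X(x,s),t)$ along a periodic orbit; your heat-equation-on-the-circle phrasing is the paper's Lemma~\ref{lemm:PB} (fundamental theorem of calculus along orbits) in disguise.
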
 
\begin{proof}  We note that $\p_t g=(B\cdot \na)^2g\in L^2([0, T]; L^2(\Omega))$ for $g\in L^2([0, T]; H^2(\Omega))$. 

(i) We fix $\eps\in (0, 1)$, multiply equation \eqref{eq:geq} by \(\frac{g}{|B|+\eps}\in L^2([0, T]; L^2(\Omega))\), and integrate by parts using the assumption that $B$ is tangent to $\p\Omega$: 
       \begin{align*}
       \mez \frac{d}{dt}\int_\Omega \frac{(g-\overline{g_0})^2}{|B|+\eps}&= \int \p_t g\frac{g-\overline{g_0}}{|B|+\eps} \\
       &= -\int_\Omega \bigl(B\cdot \na g\bigr) \; B\cdot \na(\frac{g-\overline{g_0}}{|B|+\eps})\\
        &= - \int_\Omega B\cdot\na g \, \frac{B\cdot \na (g-\overline{g_0})}{|B|+\eps} + \int_\Omega (B\cdot \na g)\; (g-\overline{g_0}) \; B\cdot\frac{(B\cdot\na B)}{|B|(||B|+\eps)^2} \\
        &\le  8\winorm{B}{1}  (\hnorm{g}{1}^2 +\hnorm{\overline{g_0}}{1}^2).
    \end{align*}
It follows that 
    \begin{align*}
    \int_\Omega \frac{\bigl(g(t)-\overline{g_0}\bigl) ^2}{|B|+\eps} &\le \int_\Omega \frac{(g_0 - \overline{g_0})^2}{|B|+\eps} + 16\,\winorm{B}{1} \,  (\hnorm{g}{1}^2 +\hnorm{\overline{g_0}}{1}^2) \\ 
    &\le \int \frac{(g_0-\overline{g_0})^2}{|B|} + 16\,\winorm{B}{1} \,  (\hnorm{g}{1}^2 +\hnorm{\overline{g_0}}{1}^2) <\infty
    \end{align*}
    for all $t\in [0, T]$ since \(g\in L^2([0,T]; H^1(\Omega))\) and $\overline{g_0}\in H^1(\Omega)$. Therefore, applying Fatou's lemma yields $\int_{\Omega}\frac{|g(t)-\overline{g_0}|^2}{|B|}<\infty$ for all $t\in [0, T]$. 
    
    (ii) We note that if  $g_0\in M_B$ then $\overline{g_0}=0$. Thus $g(t)$ satisfies \eqref{assum:g2}  by (i).  In order for $g(t)\in M_B$, it  suffices to prove that $\int_{I_x} g(X(x, s), t)ds=0$ for all $x\in \cO_B$ and $t\in [0, T]$. We denote $h(x, t)=(B\cdot\na g)(x, t)$. For $ g\in C([0, T]; C^2(\Omega))$ and $B\in C^1(\Omega)$, we have $h(t)\in C^1(\Omega)$ for all $t\in [0, T]$.  Hence, \eqref{mean:Bnabh} implies
   \bq\label{dt:intIx} \begin{aligned}
        \ddt \int_{I_x} g\bigl(X(x,s), t\bigr)\,ds &= \int_{I_x} (B\cdot\na h)\bigl(X(x,s), t\bigr)\,ds = 0\quad\forall x\in \cO_B,~\forall t\in (0, T).
    \end{aligned}
    \eq
   Integrating \eqref{dt:intIx} in $t\in [0, T]$ yields
   \[
   \int_{I_x} g\bigl(X(x,s), t\bigr)\,ds=\int_{I_x} g_0\bigl(X(x,s)\bigr)\,ds=0
   \]
   since $g_0\in M_B$.  
\end{proof}
When $B\in \cP_0$, we have the  following Poincar\'e inequality for $B\cdot \na$ in $M_B$. 
\begin{lemm}[Poincar\'{e's inequality in $M_B$}]\label{lemm:genpoin}
   If $B\in \cP_0$, then there exists $c_P>0$ such that 
    \begin{equation}
    \tnorm{g} \le c_P\tnorm{\dgr{B}{g}}\quad\forall g\in M_B\cap H^1(\Omega)\cap C^1(\Omega). \label{ineq:Poincare}
    \end{equation}
    Moreover, if $\ell\in (0, \infty)$ is an upper bound for the periods of the orbits of points in \(\mathcal{O}_B\),  then we can choose $c_P=\frac{\ell}{2\pi}$.
\end{lemm}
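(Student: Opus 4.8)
The plan is to reduce the two‑dimensional inequality to a family of one‑dimensional Wirtinger inequalities, one along each periodic orbit of $B$, and then reassemble them via the coarea formula applied to the stream function $\psi$ (recall $B=\na^\perp\psi$, so $|\na\psi|=|B|$). By Remark \ref{rema:Sard}, the set of $c\in\Rr$ that are either critical values of $\psi$ (measure zero by Sard, using $\psi\in C^2$) or lie in the countable set $\mathcal{R}_\psi$ has measure zero; call the complement the set of \emph{good} values. For a good value $c$, the level set $\psi^{-1}(c)$ is a $C^1$ one‑dimensional submanifold which, by Definition \ref{def:PB}(ii), is a disjoint union of periodic orbits $\{\Gamma_j^c\}_j$ of $B$; since $\psi\in W^{1,1}$, the coarea formula forces $\mathcal{H}^1(\psi^{-1}(c))<\infty$ for a.e.\ $c$, so this union is at most countable.

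Next I would record the scalar estimate along a single orbit. Fix a good value $c$ and a component $\Gamma=\Gamma_j^c$, a periodic orbit of period $\ell'$ with $\ell'\le\ell$ (since $\Gamma\subset\cO_B$). Picking a base point $x_0\in\Gamma$, the flow $s\mapsto X(x_0,s)$ is a $C^1$ diffeomorphism from $\Rr/\ell'\Zz$ onto $\Gamma$ with speed $|\tfrac{d}{ds}X(x_0,s)|=|B(X(x_0,s))|$, so $d\mathcal{H}^1=|B(X(x_0,s))|\,ds$ along $\Gamma$. Setting $G(s):=g(X(x_0,s))$, the hypotheses $g\in C^1\cap H^1$ give $G\in H^1(\Rr/\ell'\Zz)$ with $G'(s)=(B\cdot\na g)(X(x_0,s))$, and condition \eqref{assum:g1} of $M_B$ gives $\fint_0^{\ell'}G\,ds=0$. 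The sharp one‑dimensional Poincar\'e–Wirtinger inequality on the circle of circumference $\ell'$ (optimal constant $(\ell'/2\pi)^2$) then yields
\[
\int_\Gamma\frac{g^2}{|B|}\,d\mathcal{H}^1=\int_0^{\ell'}G^2\,ds\le\Big(\frac{\ell'}{2\pi}\Big)^2\int_0^{\ell'}(G')^2\,ds\le\Big(\frac{\ell}{2\pi}\Big)^2\int_\Gamma\frac{(B\cdot\na g)^2}{|B|}\,d\mathcal{H}^1,
\]
using $\ell'\le\ell$.

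It remains to integrate this over all orbits. The condition $\int_\Omega\frac{g^2}{|B|}<\infty$ built into $M_B$ forces $g=0$ a.e.\ on $\{B=0\}$, so, extending $\frac{g^2}{|B|}$ by $0$ there, the coarea formula for $\psi$ gives
\[
\tnorm{g}^2=\int_\Omega\frac{g^2}{|B|}\,|\na\psi|\,dx=\int_\Rr\Big(\int_{\psi^{-1}(c)}\frac{g^2}{|B|}\,d\mathcal{H}^1\Big)\,dc=\int_{\{\text{good }c\}}\sum_j\int_{\Gamma_j^c}\frac{g^2}{|B|}\,d\mathcal{H}^1\,dc.
\]
Inserting the per‑orbit bound, summing over $j$, integrating over the good $c$, and then running the coarea formula backwards with the weight $\frac{(B\cdot\na g)^2}{|B|}$ (which lies in $L^1$ because $\frac{(B\cdot\na g)^2}{|B|}\le\winorm{B}{0}|\na g|^2$), we arrive at $\tnorm{g}^2\le(\ell/2\pi)^2\tnorm{B\cdot\na g}^2$, i.e.\ \eqref{ineq:Poincare} with $c_P=\ell/2\pi$; the existence of some finite $c_P$ for $B\in\cP_0$ is then immediate from $\ell<\infty$. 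The main technical point is the measure‑theoretic bookkeeping in this last step — that $c\mapsto\int_{\psi^{-1}(c)}\frac{g^2}{|B|}\,d\mathcal{H}^1$ and its $B\cdot\na g$ analogue are measurable, and that the decomposition $\psi^{-1}(c)=\bigsqcup_j\Gamma_j^c$ into periodic orbits holds for a.e.\ $c$. The latter is exactly what membership in $\cP$ provides (Sard's theorem for the critical values, countability of $\mathcal{R}_\psi$); once it is in place, the arc‑length identity $d\mathcal{H}^1=|B|\,ds$ along each orbit and Wirtinger's inequality do the rest.
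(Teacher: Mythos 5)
Your proposal follows the same route as the paper's proof: reduce via the coarea formula for the stream function $\psi$ (using $|\nabla\psi|=|B|$) to a one-dimensional Wirtinger inequality along each periodic orbit, parametrized by the flow so that $G'(s)=(B\cdot\nabla g)(X(x_0,s))$ and $G$ has mean zero by condition \eqref{assum:g1}, then reassemble. The extra care you take (recording $d\mathcal{H}^1=|B|\,ds$, noting $g=0$ a.e. on $\{B=0\}$, flagging the measurability bookkeeping) is all consistent with and implicit in the paper's argument, so this is the same proof carried out slightly more explicitly.
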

\begin{proof}
    Since $\na^\perp \psi=B\in C(\overline{\Omega})$,  we have that $\psi$ is Lipschitz, and hence  the coarea formula 
    \[
    \int_\Omega f|B|dx=\int_\Omega f|\na \psi|dx=\int_{-\infty}^\infty\int_{\psi^{-1}(r)}fdS\;dr 
    \]
    holds for $f\in L^1(\Omega)$. Applying this to  the $L^1$ functions \(\frac{|g|^2}{|B|}\) and \(\frac{|B\cdot\na g|^2}{|B|}\) (since $g\in M_B\cap H^1(\Omega)$), we obtain
    \[
\int_\Omega |g|^2  = \int_{-\infty}^\infty \int_{\psi^{-1}(r)}|g|^2|B|^{-1}dS\;dr,\quad    \int_\Omega |\dgr{B}{g}|^2  = \int_{-\infty}^{\infty}\int_{\psi^{-1}(r)} |\dgr{B}{g}|^2|B|^{-1}\;dS\;dr.
    \]
     By Sard's theorem and the fact that $B\in \cP_0$, for a.e. $r$, the  level set \(\psi^{-1}(r)\) is a countable union of  periodic orbits. Thus, by parametrizing the orbit using the flow,  it  suffices to show 
    \begin{align}
    \int_{I_x} g\bigl(X(x,s)\bigr)^2 \;ds \le \big(\frac{\ell}{2\pi}\big)^2 \int_{I_x} (\dgr{B}{g})\bigl(X(x,s)\bigr)^2 \;ds \label{sublemm1}
    \end{align}
   whenever $x$ has periodic orbit. Note that the right-hand side of \eqref{sublemm1} is well-defined for $g\in C^1(\Omega)$. For fixed $x$, the function $G(s):=g(X(x, s))$ is periodic with period $\ell_x$ and has mean zero. Therefore,  Parseval's identity implies 
   \[
     \int_{I_x} G^2 \;ds\le \big(\frac{\ell}{2\pi}\big)^2  \int_{I_x} (G')^2 \;ds.
     \] 
     Since $G'(s)=(B\cdot \na g)(X(x, s))$, we obtain \eqref{sublemm1}. 
    \end{proof}
From the Poincar\'e inequality for $M_B$ and the invariance of $M_B$ under \eqref{eq:geq}, we deduce
\begin{prop}[A priori exponential decay in $L^2$]\label{L2apirorig} Let $B\in \cP_0$. Suppose that  $g\in C([0, T]; L^2(\Omega))$  is a solution of \eqref{eq:geq} on $[0, T]$ with the regularity \eqref{reg:g:invariance} and initial condition \(g_0 \in M_B\). Then $g(t)\in M_B$ and 
\begin{equation}
    \tnorm{g(t)} \le \tnorm{g_0}\exp\Big({\frac{-1}{c_P^2}t}\Big)\quad\forall t\in [0, T]. \label{bound:gl2}
\end{equation}
\end{prop}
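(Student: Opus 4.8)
The plan is to assemble two ingredients that have already been established — the invariance of $M_B$ under \eqref{eq:geq} (Proposition~\ref{prop:invarianceG}(ii)) and the Poincar\'e inequality for $B\cdot\na$ on $M_B$ (Lemma~\ref{lemm:genpoin}) — by means of a standard $L^2$ energy estimate. First I would invoke Proposition~\ref{prop:invarianceG}(ii): since $g$ enjoys the regularity \eqref{reg:g:invariance} and $g_0\in M_B$, we get $g(t)\in M_B$ for every $t\in[0,T]$; in particular $\overline{g_0}=0$, so the asserted decay is towards $0$ and the right-hand side of \eqref{bound:gl2} is $\tnorm{g_0}\exp(-t/c_P^2)$.

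Next I would record the energy identity. Because $g\in L^2([0,T];H^2(\Omega))$ we have $\pat g=(B\cdot\na)^2 g\in L^2([0,T];L^2(\Omega))$, so $t\mapsto\tnorm{g(t)}^2$ is absolutely continuous with $\ddt\tnorm{g}^2=2\int_\Omega(\pat g)\,g$. For a.e. $t$ we have $g(t)\in H^2(\Omega)$; writing $h=B\cdot\na g\in H^1(\Omega)$ and integrating by parts using $\dv B=0$ and $B\cdot n\vert_{\p\Omega}=0$ gives
\[
\int_\Omega\bigl((B\cdot\na) h\bigr)\,g = -\int_\Omega h\,(B\cdot\na g) = -\tnorm{\dgr{B}{g}}^2,
\]
hence $\mez\ddt\tnorm{g}^2=-\tnorm{\dgr{B}{g}}^2$ for a.e. $t\in[0,T]$.

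Then I would apply the Poincar\'e inequality \eqref{ineq:Poincare}. For a.e. $t$ the function $g(t)$ lies in $M_B\cap H^1(\Omega)\cap C^1(\Omega)$: membership in $M_B$ comes from the first step, the $C^1$ regularity from $g\in C([0,T];C^2(\Omega))$, and the $H^1$ regularity from $g\in L^2([0,T];H^2(\Omega))$. Therefore $\tnorm{\dgr{B}{g}}^2\ge c_P^{-2}\tnorm{g}^2$ and so
\[
\mez\ddt\tnorm{g}^2 \le -\frac{1}{c_P^2}\tnorm{g}^2\qquad\text{for a.e. }t\in[0,T].
\]
Since $t\mapsto\tnorm{g(t)}^2$ is absolutely continuous, Gr\"onwall's inequality yields $\tnorm{g(t)}\le\tnorm{g_0}\exp(-t/c_P^2)$, which is \eqref{bound:gl2}.

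I expect the only delicate point to be the regularity bookkeeping: verifying that the spatial integration by parts and the Poincar\'e inequality are both legitimate at the regularity level \eqref{reg:g:invariance}, and that having the differential inequality for a.e. $t$ together with absolute continuity of $t\mapsto\tnorm{g(t)}^2$ suffices to conclude. The genuinely substantive work — the invariance of $M_B$ and the Poincar\'e estimate obtained via the coarea formula and Parseval's identity along periodic orbits — has already been done in Proposition~\ref{prop:invarianceG} and Lemma~\ref{lemm:genpoin}, so no new analytic input is required here.
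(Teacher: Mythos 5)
Your proposal is correct and follows essentially the same route as the paper: invoke Proposition~\ref{prop:invarianceG}(ii) to propagate membership in $M_B$, derive the $L^2$ energy identity $\mez\ddt\tnorm{g}^2=-\tnorm{B\cdot\na g}^2$ by multiplying by $g$ and integrating by parts using $\dv B=0$ and $B\cdot n\vert_{\p\Omega}=0$, apply the Poincar\'e inequality \eqref{ineq:Poincare} for a.e.\ $t$, and integrate the resulting differential inequality. Your extra remark about absolute continuity of $t\mapsto\tnorm{g(t)}^2$ is a harmless bit of additional bookkeeping but does not change the argument.
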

\begin{proof}
By virtue of Proposition \ref{prop:invarianceG}, $g(t)\in M_B$ for all $t\in [0, T]$. Moreover, we have $\p_t g\in L^2([0, T]; L^2(\Omega))$ as noticed in the proof of Proposition \ref{prop:invarianceG}.  Hence, we can multiply  \eqref{eq:geq} by \(g\in C([0, T]; L^2(\Omega))\) and integrate by parts using the condition that $B\cdot n=0$ on $\p\Omega$:  
    \begin{align*}
        \mez \ddt \tnorm{g(t)}^2 = -\tnorm{B \cdot \na g(t)}^2 + \int_{\p \Omega}(B\cdot \na g(t))\, (B\cdot n)\, g(t) dS = -\tnorm{B \cdot \na g(t)}^2.
    \end{align*}  
    For a.e. $t\in [0, T]$, we have  $g(t)\in M_B\cap H^1(\Omega)\cap C^1(\Omega)$, so $g(t)$ obeys the  Poincar\'{e} inequality \eqref{ineq:Poincare}. Consequently,
    \[
\mez \ddt \tnorm{g(t)}^2 \le \frac{-1}{c_P^2}\tnorm{g(t)}^2\quad a.e.~t\in (0, T).
\]
Since $g\in C([0, T]; L^2(\Omega))$, integrating the preceding inequality yields the exponential decay estimate \eqref{bound:gl2}.
\end{proof}
 \begin{rema}
The $C^k(\Omega)$ assumptions in Lemma \ref{lemm:PB}, Proposition \ref{prop:invarianceG},  Lemma \ref{lemm:genpoin}, and Proposition \ref{L2apirorig} can be weakened to $C^k(U)$ for some some set $\cO_B\subset U\subset \Omega$.
\end{rema}
\begin{rema}
If $g_0\in M_B$, then $\overline{g_0}=0$ and Theorem \ref{theo:geometric} (iii) follows from Proposition \ref{L2apirorig} since $g\in C([0, T]; H^4(\Omega))$ has the regularity \eqref{reg:g:invariance}. However, in Theorem \ref{theo:geometric} (iii) we do not assume $g_0\in M_B$ but only assume  that  $\overline{g_0}\in H^1(\Omega)$ and $\int_{\Omega} \frac{|g_0-\overline{g_0}|^2}{|B|}<\infty$. We can reduce to the case $g_0\in M_B$ by noticing that $\overline{g_0}$ is a steady solution and replace $g$ by $\tilde{g}=g-\overline{g_0}$. However, a direct application of Proposition \ref{L2apirorig} would require additionally  that $\overline{g_0}\in C^2(\Omega)\cap H^2(\Omega)$ in order for $\tilde{g}$ to have the regularity \eqref{reg:g:invariance}. The proof of Theorem \ref{theo:geometric} (iii)  will avoid this. 
\end{rema}
The next lemma shows that if $B\in \cP$  in a simply connected domain $\Omega$, then $B$ vanishes somewhere in $\Omega$. Therefore, condition \eqref{assum:g2} is not automatically satisfied for smooth $B$ and $g$ in simply connected domains. 
\begin{lemm}\label{prop:fixedpoint}
    Let \(\Omega \) be a simply connected domain in \(\mathbb{R}^2\), and \(B \in C^1(\Omega, \mathbb{R}^2)\) a vector field with at least one periodic orbit. Then, \(B\) vanishes at a point inside the  orbit.
\end{lemm}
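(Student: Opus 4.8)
The plan is to run a winding-number (degree) argument. Let $\gamma$ be the given periodic orbit; pick a point $x\in\gamma$ and let $T>0$ be the minimal period, so that $\gamma=\{X(x,s):s\in[0,T]\}$. First I would record that $\gamma$ is an embedded $C^2$ Jordan curve: since $x$ lies on a periodic (hence non-constant) orbit, $B$ does not vanish along $\gamma$ — a zero of $B$ on the orbit would be an equilibrium and, by uniqueness for $\dot X=B(X)$, the trajectory through it would be constant. Thus $X(x,\cdot)\in C^2$ with $\tfrac{d}{ds}X(x,s)=B(X(x,s))\neq 0$, so the parametrization is regular, and uniqueness for the ODE forces $s\mapsto X(x,s)$ to be injective on $[0,T)$ (otherwise the period would be strictly less than $T$). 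Hence $\gamma$ is a regular closed $C^2$ curve with no self-intersections.

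Next I would use topology to place the interior of $\gamma$ inside $\Omega$. By the Jordan curve theorem, $\gamma$ splits $\Rr^2$ into a bounded component $D$ (the region enclosed by the orbit) and an unbounded one, with $\partial D=\gamma$. I claim $\overline D=D\cup\gamma\subset\Omega$. Indeed, if some $p\in D$ were not in $\Omega$, then $\gamma$ is a loop contained in $\Omega\subset\Rr^2\setminus\{p\}$ having winding number $\pm1$ about $p$; but $\Omega$ simply connected makes $\gamma$ null-homotopic in $\Omega$, hence null-homotopic in $\Rr^2\setminus\{p\}$, which would force the winding number to be $0$, a contradiction. So $D\subset\Omega$, and $\gamma\subset\Omega$ by hypothesis, giving $\overline D\subset\Omega$.

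Now suppose, for contradiction, that $B$ does not vanish anywhere on $\overline D$. Then $v:=B/|B|\colon\overline D\to S^1$ is continuous, so its restriction to $\partial D=\gamma$ extends continuously over the disk-type region $\overline D$ and therefore has winding number (degree) equal to $0$. On the other hand, along $\gamma$ the vector $B$ is, up to the positive scalar $|B|$, the unit tangent vector of the simple closed regular $C^2$ curve $\gamma$; by the Hopf Umlaufsatz (theorem of turning tangents), the unit tangent of a simple closed regular plane curve has winding number $\pm1$. This contradicts the winding number being $0$. Hence $B$ must vanish at some point of $\overline D$, and since $B\neq 0$ on $\gamma$, that point lies in $D$, i.e. strictly inside the orbit.

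I do not expect a genuine obstacle here: the proof is an assembly of classical facts — uniqueness for the flow ODE (giving that $\gamma$ is an embedded circle), the Jordan curve theorem, the implication ``simply connected $\Rightarrow$ the enclosed region lies in $\Omega$'', and the Umlaufsatz. The only point deserving a line of care is the identification of $B|_\gamma$ with a strictly positive multiple of the unit tangent of the embedded curve $\gamma$: this is exactly what makes the turning-tangent computation applicable and produces the crucial nonzero index, against the index $0$ forced by a hypothetical nonvanishing extension.
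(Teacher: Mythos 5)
Your proof is correct, but it takes a genuinely different route from the paper's. The paper argues via Brouwer's fixed point theorem: for the closed region $V$ enclosed by the orbit, each flow map $X(\cdot,\tfrac1n)\colon V\to V$ has a fixed point $x_n$; after extracting a convergent subsequence $x_n\to x_0$, a uniform-continuity argument shows $X(x_0,s)=x_0$ for all $s$, hence $B(x_0)=0$. Your argument is the classical Poincar\'e index computation: since $B$ is a positive multiple of the unit tangent along the simple closed regular $C^2$ curve $\gamma$, the Hopf Umlaufsatz gives winding number $\pm1$ for $B/|B|$ along $\gamma$; but a nowhere-vanishing $B$ on the enclosed closed region would extend $B/|B|$ continuously over a disk and force the winding number to be $0$. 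Both are sound; your version makes the geometric mechanism (the index of a periodic orbit is $\pm1$) explicit and is the textbook proof from Poincar\'e--Bendixson theory, while the paper's Brouwer argument is softer, avoiding the Umlaufsatz and the need to verify regularity and simplicity of $\gamma$, at the cost of the limiting argument that extracts an equilibrium from fixed points of vanishing period. One small technical note on your side: to conclude that a nonvanishing map on $\overline D$ has boundary degree $0$, you implicitly identify $\overline D$ with the closed disk, which uses Jordan--Schoenflies rather than just the Jordan curve theorem; this is standard but worth a word.
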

\begin{proof}
    Suppose that  $x\in \Omega$ has a periodic orbit. Let    \(V\) denote the closed region enclosed by the orbit $X(x, \cdot)$ of $x$. We have $V\subset \Omega$ is simply connected  since $\Omega$ is simply connected. For each \(s\), \(X(\cdot ,s):V\to V\) is a diffeomorphism. Thus, by Brouwer's fixed point theorem, there exists a sequence \(\{x_n\} \subset V\) such that \(X(x_n,\frac1n) = x_n\). Upon extracting a subsequence, \(x_n \to x_0 \in V\). We claim that \(X(x_0,s) = x_0\) for all $s\in \Rr$,  so that \(B(x_0) = 0\). Fix an arbitrary  $s\in \Rr$. Since $X(\cdot, \cdot)$ is uniformly continuous on $V\times [0, s]$, for all \( \eps > 0\), there exists \(\d > 0\) such that  
    \[
    |X(y, s_1) - X(z, s_2)|<\eps
    \]
    for all \(z, y\in V\) and \(s_1, s_2 \in [0, s]\) with \(|y-z|< \d\) and \(|s_1 - s_2| < \d\). We first choose \(N \in \mathbb{N}\) large enough such that  \(|x_N - x_0| < \delta\) and $\frac{1}{N}<\delta$, and then choose \(k \in \mathbb{Z}\) such that \(|\frac{k}{N} - s| < \d\). It follows that
    \begin{align*}
        |X(x_0,0) - X(x_0,s)| &\le |X(x_0,0) - X(x_N, \frac{1}{N})| + |X(x_N, \frac{k}{N}) - X(x_N,s)| + |X(x_N,s) - X(x_0,s)| \\ 
        &<3\eps,
    \end{align*}
    where we used the fact that \( X(x_N, \frac{1}{N}) = X(x_N, \frac{k}{N}) = x_N
    \). Since $\eps$ is arbitrary, we deduce that \(X(x_0,s) = X(x_0,0) = x_0\).
\end{proof}
Example \ref{exam:shearB} shows that Lemma \ref{prop:fixedpoint} is false when $\Omega=\T^2$ or $\T\times (0, 1)$. 
\subsubsection{Sobolev a priori estimates}
In order to establish a priori estimates of \eqref{eq:geq} in Sobolev spaces, we will commute space partial derivatives $\p^\alpha$   with \((B\cdot\na)^2\). 
A key point is that the highest order term in the commutator has the form \((B\cdot \na)S_n \).
\begin{lemm}\label{lemm:commeq}
For   \(\alpha \in \Nn^2\) with  \(|\alpha|=n\), we have
    \[
    [\p^\alpha, (B\cdot\na)^2] = (B\cdot\na)S_{n} + T_n,
    \]
    where \(S_n, T_n\) are differential operators of order \(n\). Moreover, the coefficients of $S_n$ are polynomials of $\partial^n B$ and the coefficients of $T_n$ are polynomials of $\partial^{n+1} B$. 
  \end{lemm}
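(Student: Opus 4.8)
The plan is to expand the commutator directly by the Leibniz rule rather than by induction on $n$, since an induction on $\p^\beta=\p_k\p^\alpha$ generates spurious order‑$(n{+}1)$ terms that are not manifestly of the form $(B\cdot\na)(\cdot)$ and only cancel non‑obviously. First I would write $L:=(B\cdot\na)^2$ in non‑divergence form, $Lg = A_{ij}\p_i\p_j g + C_j\p_j g$, with $A_{ij}:=B_iB_j$ (symmetric) and $C_j:=(B\cdot\na B)_j$, summation over $i,j\in\{1,2\}$ understood throughout. Applying $\p^\alpha$ and using the Leibniz rule gives
\[
[\p^\alpha,L]g = \sum_{0\neq\beta\le\alpha}\binom{\alpha}{\beta}\Big((\p^\beta A_{ij})\,\p_i\p_j\p^{\alpha-\beta}g + (\p^\beta C_j)\,\p_j\p^{\alpha-\beta}g\Big).
\]
Every term of the $C$–sum has order $\le n-|\beta|+1\le n$ with coefficients $\p^\beta(B\cdot\na B)_j$ polynomial in the derivatives of $B$ up to order $|\beta|+1\le n+1$; likewise every term of the $A$–sum with $|\beta|\ge 2$ has order $\le n-|\beta|+2\le n$ with coefficients $\p^\beta(B_iB_j)$ polynomial in the derivatives of $B$ up to order $|\beta|\le n$. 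All of these will be collected into $T_n$.

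The only order‑$(n{+}1)$ contributions are the $A$–terms with $|\beta|=1$, i.e. $\beta=e_k$ with $\binom{\alpha}{e_k}=\alpha_k$, and extracting a single left factor $(B\cdot\na)$ from them is the one point that is not pure bookkeeping. Here I would exploit the symmetry of $A_{ij}$: since $\p_k(B_iB_j)=(\p_kB_i)B_j+B_i(\p_kB_j)$ and $\p_i\p_j$ is symmetric in $i,j$, a relabelling $i\leftrightarrow j$ yields, for any $h$,
\[
(\p_kA_{ij})\,\p_i\p_j h = 2\,B_i(\p_kB_j)\,\p_i\p_j h = 2(B\cdot\na)\big((\p_kB\cdot\na)h\big) - 2\big((B\cdot\na)\p_kB\big)\cdot\na h .
\]
Taking $h=\p^{\alpha-e_k}g$ and summing over $k$, the first term on the right contributes $(B\cdot\na)S_n$ with
\[
S_n := 2\sum_{k=1}^{2}\alpha_k\,(\p_kB\cdot\na)\,\p^{\alpha-e_k},
\]
a differential operator of order $(n-1)+1=n$ whose coefficients $2\alpha_k\p_kB_i$ are (linear, hence polynomial) in $\p B$; the second term, $-2\sum_k\alpha_k\big((B\cdot\na)\p_kB\big)\cdot\na\,\p^{\alpha-e_k}$, is of order $n$ with coefficients polynomial in $B$ and $\p^2 B$, so it is absorbed into $T_n$. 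This gives $[\p^\alpha,L]=(B\cdot\na)S_n+T_n$, with $S_n,T_n$ of order $\le n$, the coefficients of $S_n$ polynomial in the derivatives of $B$ up to order $n$ and those of $T_n$ polynomial in the derivatives of $B$ up to order $n+1$; the case $n=0$ is trivial with $S_0=T_0=0$.

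The main obstacle is essentially the bookkeeping: confirming that the symmetrization identity simultaneously pulls out one common left factor $(B\cdot\na)$ from all the top‑order $A$–terms at once, so that no order‑$(n{+}1)$ remainder is left outside $(B\cdot\na)S_n$; and then verifying that each coefficient entering $T_n$ — arising from $\p^\beta(B_iB_j)$ with $|\beta|\le n$, from $\p^\beta(B\cdot\na B)_j$ with $|\beta|\le n$, and from $(B\cdot\na)\p_kB$ — is indeed a polynomial in derivatives of $B$ of order at most $n+1$. A minor wording point: "order $n$" should be read as "order at most $n$"; if one wants exactly order $n$ one simply notes that $(B\cdot\na)S_n$ already carries the full order‑$(n{+}1)$ part of $[\p^\alpha,L]$, while $S_n$ and $T_n$ genuinely have order $n$ for $n\ge1$.
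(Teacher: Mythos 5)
Your proof is correct, but it takes a genuinely different route than the paper's. The paper proves the lemma by induction on \(n=|\alpha|\): writing \(\alpha=\beta+e_i\), one has \([\p^\alpha,(B\cdot\na)^2]=[\p_i,(B\cdot\na)^2]\p^\beta+\p_i[\p^\beta,(B\cdot\na)^2]\); the base case \(n=1\) together with the induction hypothesis then yield \(S_{k+1}=S_1\p^\beta+\p_iS_k\) and \(T_{k+1}=T_1\p^\beta+(\p_iB\cdot\na)S_k+\p_iT_k\). You instead expand \([\p^\alpha,L]\) in one shot via Leibniz applied to the non-divergence form \(L=A_{ij}\p_i\p_j+C_j\p_j\), note that only the \(|\beta|=1\) principal terms carry order \(n+1\), and peel off a left factor of \((B\cdot\na)\) from those using the symmetry of \(A_{ij}=B_iB_j\). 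Your opening motivation for avoiding induction is off the mark — the paper's inductive step produces exactly one order-\((n+1)\) contribution, \((B\cdot\na)\big(S_1\p^\beta+\p_iS_k\big)\), which is already in the desired form with nothing to cancel — but this only affects the preamble, not the argument. Your direct approach buys explicit closed-form expressions for \(S_n\) and \(T_n\); in particular it exhibits an \(S_n=2\sum_k\alpha_k(\p_kB\cdot\na)\p^{\alpha-e_k}\) whose coefficients are merely \emph{linear in} \(\partial B\), which is stronger than the paper's claim that they are polynomials of \(\p^nB\). The inductive argument is shorter to write and needs no symmetrization observation, but the operators it produces are only given recursively. Both establish the stated order and coefficient bounds, once one reads ``order \(n\)'' as ``order at most \(n\)'' as you correctly point out.
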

\begin{proof}
    We proceed by induction.
    For the base case \(n = 1\), consider \([\p_i, (B\cdot\na)^2]\), where \(i \in \{1,2\}\). Expanding, we obtain
    \begin{align*}
        [\p_i, (B\cdot\na)^2] &= (\p_iB\cdot\na)(B\cdot\na) + (B\cdot\na)(\p_iB\cdot\na) 
        = (B\cdot\na)(2\p_iB\cdot\na) + [\p_iB\cdot\na, B\cdot\na] \\ 
        &= (B\cdot\na)S_1 + T_1,
    \end{align*}
    where
    \[S_1 = 2\p_iB\cdot\na  \text{ and } T_1 = [\p_iB\cdot\na, B\cdot\na] = \sum_{k,j} \p_iB_k \, \p_kB_j\, \p_j - B_j\,\p_j\p_iB_k \, \p_k.
    \]
Clearly the coefficients of $S_1$ are   polynomials of $DB$ and  the coefficients of $T_1$  are   polynomials of $\partial^2B$. 
    
    Assume the claim holds for \(n =k\ge 1\) and let \(\alpha = \beta + e_i\) be a multi-index of length \(k+1\). Then
    \[
    [\p^\alpha, (B\cdot \na)^2] = [\p_i, (B\cdot\na)^2]\p^\beta + \p_i [\p^\beta, (B\cdot\na)^2].
    \]
   Using the base case and the induction hypothesis, we deduce
    \begin{align*}
        [\p^\alpha, (B\cdot \na)^2] &= (B\cdot\na)S_1\p^\beta + T_1\p^\beta  + \p_i \bigl( (B\cdot\na)S_k + T_k\bigr) \\
        &= (B\cdot\na)S_1\p^\beta + T_1\p^\beta + (\p_iB\cdot\na)S_k + (B\cdot\na)\p_iS_k  + \p_iT_k \\ 
        &= (B\cdot \na)S_{k+1} + T_{k+1}
    \end{align*}
with \(S_{k+1}:= S_1\p^\beta + \p_i S_k\) and \(T_{k+1} := T_1\p^\beta + (\p_i B\cdot \na)S_k+ \p_i T_k\). It is readily seen that the coefficients of $S_{k+1}$ are   polynomials of $\partial^{k+1}B$ and  the coefficients of $T_{k+1}$  are   polynomials of $\partial^{k+2}B$. 
\end{proof}
\begin{prop}[\(H^m\) a priori estimate]\label{aprioriHmg}
    Let $m\ge 0$ and  \(B \in W^{m+1, \infty}(\Omega)\) any divergence-free vector field satisfying $B\cdot n=0$ on $ \p\Omega$. If \(g \in L^2(0,T;H^{m+2}(\Omega))\) is a solution of \eqref{eq:geq}, then
    \begin{equation}\label{bound:ghm}
        \hnorm{g(t)}{m} \le \hnorm{g_0}{m} e^{Ct}\quad\forall t\in [0, T],
    \end{equation}
    where $C$ depends only on $m$ and $\| B\|_{W^{m+1, \infty}(\Omega)}$.
\end{prop}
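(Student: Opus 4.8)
The plan is a standard energy estimate, performed on $\partial^\alpha g$ for all multi-indices $\alpha$ with $|\alpha|\le m$, with Lemma \ref{lemm:commeq} supplying control of the commutator $[\partial^\alpha,(B\cdot\na)^2]$. Since the statement is an \emph{a priori} bound, I would assume $g$ has the declared regularity $g\in L^2(0,T;H^{m+2}(\Omega))$; together with $\pat g=(B\cdot\na)^2g\in L^2(0,T;H^m(\Omega))$ this legitimizes applying $\partial^\alpha$ to \eqref{eq:geq} and performing the integrations by parts below, and it makes $t\mapsto \hnorm{g(t)}{m}^2$ absolutely continuous with $\ddt\hnorm{g(t)}{m}^2=2\sum_{|\alpha|\le m}\int_\Omega \pat\partial^\alpha g\,\partial^\alpha g$. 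The one structural fact used repeatedly is that if $B$ is divergence-free and $B\cdot n=0$ on $\p\Omega$, then $\int_\Omega(B\cdot\na)w\,v=-\int_\Omega w\,(B\cdot\na)v$ for $w,v\in H^1(\Omega)$, with \emph{no} boundary term; in particular the Dirichlet condition $g|_{\p\Omega}=0$ is not needed for this estimate.

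First, at the $L^2$ level, multiplying \eqref{eq:geq} by $g$ and integrating by parts twice gives $\mez\ddt\tnorm{g}^2=-\tnorm{B\cdot\na g}^2\le0$. Next, fix $\alpha$ with $1\le|\alpha|=n\le m$. Applying $\partial^\alpha$ to \eqref{eq:geq} and using Lemma \ref{lemm:commeq},
\[
\pat\partial^\alpha g=(B\cdot\na)^2\partial^\alpha g+(B\cdot\na)S_n g+T_n g .
\]
Testing against $\partial^\alpha g$: the first term contributes $-\tnorm{(B\cdot\na)\partial^\alpha g}^2$ by the integration-by-parts identity above, and this dissipative term is what absorbs the commutator contributions. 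For the second term, integrating by parts once more yields $-\int_\Omega S_n g\,(B\cdot\na)\partial^\alpha g$; since $S_n$ is a differential operator of order $n$ whose coefficients are polynomials in the derivatives of $B$ of order $\le n\le m$, we have $\tnorm{S_n g}\le C(m,\winorm{B}{m})\hnorm{g}{m}$, so by Cauchy--Schwarz and Young this term is bounded by $\mez\tnorm{(B\cdot\na)\partial^\alpha g}^2+C\hnorm{g}{m}^2$. For the third term, $T_n$ is a differential operator of order $n$ whose coefficients are polynomials in the derivatives of $B$ of order $\le n+1\le m+1$ --- which is precisely why the hypothesis is $B\in W^{m+1,\infty}$ --- hence $\tnorm{T_n g}\le C(m,\winorm{B}{m+1})\hnorm{g}{m}$ and $\bigl|\int_\Omega T_n g\,\partial^\alpha g\bigr|\le C\hnorm{g}{m}^2$.

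Summing over all $|\alpha|\le m$, the total dissipation $\sum_{|\alpha|\le m}\tnorm{(B\cdot\na)\partial^\alpha g}^2$ dominates the $\mez$-factors produced by the $S_n$ terms, leaving
\[
\mez\ddt\hnorm{g}{m}^2\le C\,\hnorm{g}{m}^2,\qquad C=C(m,\winorm{B}{m+1}),
\]
and Gronwall's inequality gives \eqref{bound:ghm}. For $m=0$ no commutator is needed and the estimate reduces to the $L^2$ computation.

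The argument involves no genuine difficulty; the two points that require care are both bookkeeping. One must check, via the recursion in Lemma \ref{lemm:commeq}, that the coefficients of $S_n$ and $T_n$ never involve more than $n+1\le m+1$ derivatives of $B$, so that they lie in $L^\infty$ under the stated hypothesis and the final constant depends only on $\winorm{B}{m+1}$. And one must make sure the $S_n$ contribution is organized so that the extra derivative lands back on a factor $(B\cdot\na)\partial^\alpha g$, allowing it to be absorbed by the dissipation rather than forcing a bound by a higher Sobolev norm of $g$; this is exactly the content of the ``$(B\cdot\na)S_n$'' form of the leading commutator term in Lemma \ref{lemm:commeq}.
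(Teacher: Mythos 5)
Your proof is correct and follows essentially the same route as the paper: apply $\partial^\alpha$, use Lemma \ref{lemm:commeq} to split the commutator as $(B\cdot\na)S_n+T_n$, integrate by parts so the $S_n$ contribution lands on $(B\cdot\na)\partial^\alpha g$ and is absorbed by the dissipation via Young's inequality, bound the $T_n$ contribution directly, and conclude by Gronwall. Your side remarks — that $g|_{\p\Omega}=0$ is not needed here, only $B\cdot n|_{\p\Omega}=0$, and that $T_n$ involves $\partial^{n+1}B$ which forces the $W^{m+1,\infty}$ hypothesis — are both accurate and match the paper's reasoning.
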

\begin{proof}
  We will only consider $m\ge 1$, the case $m=0$ being an easy modification of the following proof. Under the assumed regularity of $B$ and $g$,  we have $\p_t g\in L^2([0, T]; H^m(\Omega))$.  Let \(\alpha\) be a multi-index of length \(n\le m\). Taking \(\p^\alpha\) of the equation \eqref{eq:geq} and using Lemma \ref{lemm:commeq}, we find
    \[
    \pat \p^\alpha g = \p^\a(B\cdot \na )^2 g= (B\cdot\na)^2 \p^\alpha g + [\p^\alpha, (B\cdot\na)^2]g = (B\cdot \na )^2 \p^\alpha g + (B\cdot \na)S_n g + T_n g,
    \]
    where  \(S_n, T_n\) are differential operators of order \(n\). Moreover, the coefficients of $S_n$ are polynomials of $\partial^n B$ and the coefficients of $T_n$ are polynomials of $\partial^{n+1} B$. Then we multiply  the preceding equation  by \(\p^\alpha g \) and integrate by parts using the conditions $\dv B=0$ and $B\cdot n\vert_{\p\Omega}=0$:
    \begin{align*}
        \mez \ddt \tnorm{\p^\alpha g}^2 &= -\tnorm{B\cdot\na\p^\a g}^2 + 
        \int (B\cdot \na S_n g) \; \p^\a g + \int T_n g \; \p^\a g \\ 
        &= -\tnorm{B\cdot \na \p^\a g}^2 - \int S_n g \; (B\cdot\na\p^\a g )+ \int T_n g \; \p^\a g.
    \end{align*}
  Using Young's inequality yields
    \begin{equation} \label{bound:deriveq}
    \begin{aligned}
        \mez \ddt \tnorm{\p^\alpha g}^2& \le  -\mez \tnorm{B\cdot\na \p^\alpha g}^2 + \mez \| S_n g\|_{L^2}^2+ \| T_ng\|_{L^2}\| \p^a g\|_{L^2}\\
        &\le C(n, \| B\|_{W^{n+1, \infty}})\| g\|_{H^n}^2.
        \end{aligned}
    \end{equation}
Summing the preceding equalities over all $|\alpha|\le m$, we obtain the a priori estimate \eqref{bound:ghm} with $C=C(m, \| B\|_{W^{m+1, \infty}})$.
\end{proof}
\begin{rema}
 When  \(B\) is a non-vanishing shear flow in $\T\times (0, 1)$ (or $\T^2$), Theorem \ref{theo:tahs} provides exponential convergence of $g$ in all Sobolev norms.  Here, for general $B\in \cP_0$,  we can only establish the convergence in the \(L^2\) norm, while the $H^m$ norms ($m\ge 1$) obey an exponential upper bound. The reason is that the class \(M_B\) is not necessarily invariant under differentiation. 
 \end{rema}
\subsubsection{Proof of Theorem \ref{theo:geometric}} Let  $m\ge 2$ and $B\in W^{m+1, \infty}(\Omega)$ satisfy $\dv B=0$ and $B\cdot n\vert_{\p \Omega}=0$. \\

(i) For any $g_0\in H^m(\Omega)$, we will construct the solution of \eqref{eq:geq}  via a Galerkin scheme. Let \(\{w_i\}_{i=1}^\infty\) be an orthonormal basis of \(L^2(\Omega)\) consisting of eigenfunctions of the Laplacian with homogeneous Dirichlet boundary condition, $-\Delta w_j=\ld_jw_j$. Since $\p\Omega$ is $C^m$, we have $w_j\in H^m(\Omega)$ for all $j\ge 1$. Denote the \(L^2\) projection onto \(\text{span}\{w_1,...,w_n\}\) by \(\proj_n\).
    We seek  \(g_n = \sum_{i=1}^n a^i_n(t) \,w_i\) such that for all \(v \in \text{span}\{w_1,...,w_n\},\)
    \begin{equation}
    \int_{\Omega} \pat g_n \, v = \int_{\Omega} (B\cdot \na )^2g_n \, v =  -\int_{\Omega} (B\cdot\na)g_n \, (B\cdot \na)v, \text{ and } g_n(0)  = \proj_ng_0. \label{2-5gal1}
    \end{equation}
    This is equivalent to the linear system of ODEs with constant coefficients,
    \[
    a^{i\,\prime}_n(t) = \sum_{j=1}^n c^{i}_j \,a^j(t), \text{ and } a_n^i(0) = (g_0, w_i)_{L^2},
    \]
    where \(c^{i}_j = -\displaystyle\int_{\Omega} (B\cdot \na )w_i\;(B\cdot \na)w_j\). Thus \eqref{2-5gal1} has a unique solution \(g_n \in C^\infty([0,\infty); H^m(\Omega))\).
    
      We claim that  there exists \(C = C(m, \winorm{B}{m+1})>0\) such that 
  \begin{equation}\label{bound:gnhm}
  \| g_n(t)\|_{H^m}\le C\hnorm{g_0}{m} \,e^{Ct}\quad\forall t>0
    \end{equation}
    
     We first consider the case \(m = 2k\) an even integer.  Since $w_j\in H^m(\Omega)$, we have $(-\Delta)^{2k}w_j=(-\Delta)^k(\ld_j^k w_j)\in L^2(\Omega)$.  Consequently, we can choose  $v=(-\Delta)^{2k} g_n$ in  \eqref{2-5gal1} to have
    \[
    \int_{\Omega} \pat g_n \; (-\Delta)^{2k} g_n  =  \int_{\Omega} (B\cdot\na)^2g_n \,(-\Delta)^{2k}g_n.
    \]
   Noting that $\Delta^j g_n\vert_{\p\Omega}=0$ for all $j\le 2k$, we can integrate by parts to deduce
    \[
   \mez \ddt \tnorm{(-\Delta)^k g_n}^2 = \int_{\Omega } (-\Delta)^k (B\cdot \na )^2 g_n\, (-\Delta)^k g_n.
    \]
    Then, applying the estimate  \eqref{bound:deriveq} with $|\alpha|=m$ yields 
    \bq\label{est:gn:Hm}
    \mez \ddt \tnorm{(-\Delta)^k g_n}^2 \le C\hnorm{g}{m}^2
    \eq
for some \(C = C(m, \winorm{B}{m+1})>0\).  On the other hand, using elliptic regularity and  the fact that \((-\Delta)^j g_n\) vanishes on the boundary for all \(j \in \mathbb{N}\), we find
   \begin{equation} \label{bound:lapderiv}
   \hnorm{g_n}{m} = \hnorm{g_n}{2k} \le C_m \tnorm{(-\Delta)^k g_n}.
   \end{equation}
   Combining \eqref{est:gn:Hm} and \eqref{bound:lapderiv} and applying Gr\"{o}nwall inequality, we obtain 
   \[
   \tnorm{(-\Delta)^k g_n(t)} \le \tnorm{(-\Delta)^k \proj_n g_0}\,e^{Ct}\quad\forall t>0.
   \]
  Invoking  \eqref{bound:lapderiv} again, we conclude \eqref{bound:gnhm} for the case \(m = 2k\). \\
  
  Next we consider the case  \(m = 2k+1\). Following the above argument but with the an additional integration by parts, we find
    \[
    \mez \ddt \tnorm{\na (-\Delta)^k g_n}^2 \le C \hnorm{g_n}{m}^2,
    \]
    and
    \[
    \hnorm{\na g_n}{2k} \le C_m \tnorm{\na (-\Delta)^k g_n}.
    \]
   But \(g_n\vert_{\p\Omega=0}\), applying  Poincar\'{e}'s  inequality yields
    \[
    \hnorm{ g_n}{m} \le C_m\tnorm{\na (-\Delta)^k g_n}.
    \]
    Then we can conclude \eqref{bound:gnhm}  as in the first case.  The proof of \eqref{bound:gnhm} is complete.
    
     Using the  uniform bound \eqref{bound:gnhm} along with a standard argument, we deduce  the existence and uniqueness of a solution \(g \in {C([0, \infty); H^m(\Omega))}\) with the exponential bound \eqref{g:Hm} for any $m\ge 2$. 
    \vspace{.2in} 
    
    (ii) Now we assume that $B\in \cP$ and $m\ge 4$. We fix $x\in \cO_B$ and let $I_x=[0, \ell_x]$ denote one period of the orbit of $x$. Clearly  $(B\cdot \na h)(X(x, s))=\frac{d}{ds} h(X(x, s))$ for any $h\in C^1(\Omega)$. Consequently, since $g(t)\in H^4(\Omega)\subset C^2(\Omega)$, we have
    \[
    \p_t g(X(x, s), t)=\frac{\p^2}{\p s^2} g(X(x, s), t).
    \]
    Thus $G(s, t)\equiv G(s, t; x):=g(X(x, s), t)$ solves the 1D heat equation $\p_t G=\p_s^2G$ on $I_x\times [0, \infty)$ with periodic boundary conditions. In particular, we have 
    \[
 \lim_{t\to \infty} G(s, t)=\fint_{I_x} G(s, 0)ds= \overline{g_0}(x)
    \]
    and
    \begin{equation}\label{bound:1ddecay}
        \Vert G(\cdot, t; x)-\overline{g_0}(x)\Vert_{L^2(I_x)} \le \exp(-\frac{2\pi}{\ell_x}t)\,\Vert G(\cdot, 0; x) - \overline{g_0}(x)\Vert_{L^2(I_x)},
    \end{equation}
    where $\overline{g_0}$ is defined by \eqref{def:g0bar}. Moreover,  the maximum principle implies
    \[
    \max_s |G(s, t)|\le \max_s |G(s, 0)|\le \| g_0\|_{L^\infty(\Omega)}. 
    \]
   We note that $g(x, t)=G(0, t)$ for all $x\in \cO_B$. On the other hand, if $B(x)=0$, then $g(x, t)=g_0(x)=\overline{g_0}(x)$ for all $t$. It follows that 
   \[
   \lim_{t\to \infty} g(x, t)=\overline{g_0}(x)\quad\text{a.e.~} x\in \Omega\quad\text{and~} \|g(\cdot, t)\|_{L^\infty(\Omega)}\le \| g_0\|_{L^\infty(\Omega)}.
   \]
    Therefore,  by the dominated convergence theorem,  $g(t)\to \overline{g_0}$ in  $L^p(\Omega)$, $1\le p<\infty$, as $t\to \infty$. 
    
    Next, we assume $g_0\in H^5(\Omega)$.  Then $k:=B\cdot \na g$ solves \eqref{eq:geq} with initial data $k_0=B\cdot \na g_0\in H^4(\Omega)$ since both $B$ and $\na g_0$ belong the algebra $H^4(\Omega)$. In the notation of \eqref{def:g0bar}, we have $\overline{k_0}=0$ by virtue of Lemma \ref{lemm:PB}. Therefore, using the above relaxation in $L^p$, we deduce that $B\cdot \na g(t)\to 0$ in $L^p$. 
       \vspace{.2in}
     
  (iii) Now  we assume that $B\in \cP_0$,  $m\ge 4$, and  $\overline{g_0}\in H^1(\Omega)$. Since $g\in C([0, \infty); H^m)\subset C([0, \infty); C^2(\overline{\Omega}))$, $g$ has the regularity \eqref{reg:g:invariance} for all $T>0$. According to Proposition \ref{prop:invarianceG} (i), $\int_{\Omega} \frac{|g(t)-\overline{g_0}|^2}{|B|}<\infty$, given the condition holds for \(g_0\). Hence, we can apply the coarea formula to the \(L^1\) function \(\frac{|g(t)-\overline{g_0}|^2}{|B|}\) to obtain
  \[
  \int_\Omega |g(t)-\overline{g_0}|^2 = \int_{-\infty}^\infty\int_{\psi^{-1}(r)}  \frac{|g(t)-\overline{g_0}|^2}{|B|} \, dS\,dr.
  \]
For a.e. \(r\in\Rr\), the level set \(\psi^{-1}(r)\) is a countable union  of periodic orbits of points in a set $R(r)$. We can assume without loss of generality that $R(r)\subset \cO_B$. Parametrizing the level sets using the flow, we have
\begin{align*}
\int_\Omega |g(t)-\overline{g_0}|^2 &= \int_{-\infty}^\infty \int_{\psi^{-1}(r)} \frac{|g-\overline{g_0}|^2}{|B|} \\
&=  \int_{-\infty}^\infty \sum_{x\in R(r)} \int_{I_x}  |g(X(x, s), t)-\overline{g_0}(X(x, s))|^2ds\\
&= \int_{-\infty}^\infty \sum_{x\in R(r)} \Vert G(\cdot, t; x)-\overline{g_0}(x)\Vert_{L^2(I_x)}^2,
\end{align*}
where $G(s, t; x):=g(X(x, s), t)$ and we have used the fact that $ \overline{g_0}(X(x, s))=\overline{g_0}(x)$ for all $x\in \cO_B$.  Recalling \eqref{bound:1ddecay}, we get
\begin{equation}\label{bound:2ddecay}
    \int_\Omega |g(t)-\overline{g_0}|^2 \le \int_{-\infty}^\infty \sum_{x\in R(r)} \exp(-\frac{4\pi}{\ell_x}t) \, \Vert G(\cdot, 0; x)- \overline{g_0}(x)\Vert_{L^2(I_x)}^2.
\end{equation}
Since \(B\in \cP_0,\) there exists a bound \(\ell\) for \(\ell_x\), uniform in \(x\in \cO_B\), and we conclude
\bq\label{conclude:decay}
\int_\Omega |g(t)-\overline{g_0}|^2 \le \exp(-\frac{4\pi}{\ell}t)\int_{-\infty}^\infty \sum_{x\in R(r)} \Vert G(\cdot, 0; x) -\overline{g_0}(x)\Vert _{L^2(I_x)}^2 = \exp(-\frac{4\pi}{\ell}t)\int_\Omega |g_0-\overline{g_0}|^2.
\eq
 This proves \eqref{g:L2}. Finally, \eqref{decay:Bnag} can be obtained similarly to (ii). 
\begin{rema}
    The exponential decay \eqref{conclude:decay} follows from the inequality \eqref{bound:2ddecay} and the boundedness of the period $\ell_x$. More generally, if $\{ \ell_{x}: x\in \cO_B\}$ is unbounded in an appropriate  manner, then it is possible to obtain a decay rate for  $\| g(t)-\overline{g_0}\|_{L^2(\Omega)}$.  Let us illustrate this in the simple case the  shear flow \(B(x_1, x_2) = (x_2^\a,0)\), $\a >0$, in the channel \(\Omega = \T\times (0,1)\). Note that \(\ell_x = 2\pi x_2^{-\a}\) blows up as $x_2\to 0^+$. In this case, we can bypass the coarea formula as in the proof of Proposition \ref{prop:relax:channel}. In particular, squaring \eqref{channel:decayheat} then integrating in $x_2\in (0, 1)$, we obtain 
    \[
    \| g(t)-\overline{g_0}\|_{L^2(\Omega)}^2\le \int_0^1 e^{-2x_2^\alpha t} \| g_0(\cdot , x_2)-  \fint_{\T} g_0(s, x_2)ds\|_{L^2(\T)}^2 dx_2.
    \]
    Since 
    \[
    \| g_0(\cdot , x_2)-  \fint_{\T} g_0(s, x_2)ds\|_{L^2(\T)}\le C_1\| g_0\|_{L^\infty(\Omega)}
    \]
    for some $C_1>0$ independent of $x_2$,  it follows that 
    \[
    \| g(t)-\overline{g_0}\|_{L^2(\Omega)}^2\le C_1^2\| g_0\|_{L^\infty(\Omega)}^2\int_0^1 e^{-2x_2^\alpha t} dx_2\le C_2\| g_0\|_{L^\infty(\Omega)}^2 \frac{1}{t^{\frac{1}{\a}}}.
    \]
    Thus we obtain a polynomial decay rate. The rate may depend on initial data: for  $g_0(x)=a(x_1)b(x_2)$, where $b(x_2)\le Cx_2^\beta$, $\beta\ge 0$, we have
    \[
     \| g(t)-\overline{g_0}\|_{L^2(\Omega)}^2\le C\| a'\|_{L^2(\T)}\int_0^1 e^{-2x_2^\alpha t}x_2^{2\beta}dt\le C\| a'\|_{L^2(\T)}\frac{1}{t^{\frac{1}{\a}+2\alpha \beta}}.
     \]
\end{rema}
\subsubsection{Examples}
We will examine some examples of vector fields $B$ and initial data $g_0$. 
\begin{exam}
Consider the steady Euler solution \(B(x_1,x_2) = (-x_2, x_1)\) on the  disk $\Omega=B(0, 1)$. In polar coordinates, the trajectory of a point $x=(r, \tt)$  is given by
\[
X(r,\theta, t) = r\bigl(\cos(\theta + t), \sin(\theta+ t)\bigr),
\]
so  \(I_x = 2\pi\) for all \(x \in \Omega\setminus\{0\}\). Thus $B\in \cP_0$, with $\mathcal{R}_\psi=\emptyset$. For any initial condition $g_0(r,\theta)  \in C^2((0, 1)\times \T)$, we have
\[
\overline{g_0}(r, \tt)=\fint_{I_x} g_0\bigl(X(x, s)\bigr) \,ds= \fint_0^{2\pi} g_0(r, \theta + s)\,ds = \fint_0^{2\pi} g_0(r, s)\,ds,
\]
and
\[
\int_{\Omega} \frac{|g_0-\overline{g_0}|^2}{|B|} = \int_0^1\int_0^{2\pi} \frac{|g_0(r, \tt)- \fint_0^{2\pi} g_0(r, s)\,ds|^2 }{r} \,rd\theta\,dr < \infty.
\]
Therefore, if $g_0(r, \tt)\in  H^2(B(0, 1))$ then Theorem \ref{theo:geometric} implies that 
\[
\| g(t)-\overline{g_0}\|_{L^2(\Omega)}\le \| g_0-\overline{g_0}\|_{L^2(\Omega)}e^{-\frac{t}{(2\pi)^2}}\quad\forall t>0.
\]
Above is an illustration of a direct application of Theorem \ref{theo:geometric}. Alternatively,  for this special case we can write $B\cdot \na g(x)=\p_\tt g(r, \tt)$, so that $\p_tg(r, \tt)=\p_\tt^2g(r, \tt)$, $(r, \tt) \in (0, 1)\times \T$. Hence, $g$ satisfies \eqref{eq:sgeq} with $x_1=\tt$, $x_2=r$, and $V\equiv 1$. Therefore, by virtue of Theorem \ref{theo:tahs}, $g(t)$ converges to $\overline{g_0}$ exponentially fast in all Sobolev norms. 
\end{exam}
\begin{exam} Consider $\Omega=\T^2=(\Rr\setminus 2\pi\Zz)^2$ and the Hamiltonian \(\psi(x,y) = \sin(x)\,\sin(y)\) which gives rise to the Euler steady solution \(B = \na^\perp \psi\). Let \(U = (0, \pi) \times (0,\pi)\), a maximal open set in which $\psi\ne 0$. Note that $\psi(U)=(0, 1]$ and  $(\frac{\pi}{2}, \frac{\pi}{2})$ is the only critical point of $\psi$ in $U$. Thus we consider regular values $c\in (0, 1)$. Then \(\psi^{-1}(c)\) is a 1-dimensional manifold which is closed in \(U\) by the continuity of $\psi$. Since \(\psi\vert_{\p U}=0\), we have \(\psi^{-1}(c) \subset U\), and thus \(\psi^{-1}(c)\) is closed in \(\mathbb{R}^2\). This implies that each connected component of \(\psi^{-1}(c)\) is a connected compact 1-dimensional manifold, and hence, homeomorphic to \(\mathbb{S}^1\). Thus the orbits of all  \(x\in U \, \backslash \{(\frac{\pi}{2}, \frac{\pi}{2})\}\) are closed, so $B\in \cP$. However, we claim that  for any set $Z\subset (0, 1)$ of measure zero, the periods of points in $\{\psi^{-1}(c): c\in (0, 1)\setminus Z\}$ are not bounded, so $B\notin \cP_0$.  To this end let  \(\{y_n\} \subset (0, \frac{\pi}{2})\) be a decreasing sequence converging to \(0\) such that for all \(n \in \mathbb{N}\), \(\psi(\frac{\pi}{2}, y_n) \notin Z\). Let $\ell_n$ denote the length of one period of the point $(\frac{\pi}{2}, y_n)$. Assume for the sake of  contradiction that $\{\ell_n\}$ is bounded. Then, upon extracting a subsequence, we may assume that $\ell_n\to M$.  Given \(\eps > 0\), there exists \(\d > 0\) such that for all \((x,y) \in U\) and \(t \in (\frac{M}{2} - \d , \frac{M}{2} + \d)\), we have  \(|X(x,y, \frac{M}{2}) - X(x,y, t)| < \eps\).
On the other hand, using symmetries of the stream function $\sin(x)\sin(y)$, one can show that  \(X(\pi - x, \pi - y, t) = (\pi, \pi) - X(x,y,t)\) for all \((x,y) \in U\). This implies \(X(x,y, \frac{\ell_{(x,y)}}{2}) = (\pi, \pi) - (x,y)\), where $\ell_{(x, y)}$ is lenght of the period of $(x, y)$. Thus, for all \(n\) sufficiently large such that \(|\ell_n- M| < \d\), we get
\[
 |(\frac{\pi}{2}, \pi - y_n) - X(\frac{\pi}{2}, y_n, \frac{M}{2})|=|X(\frac{\pi}{2}, y_n, \frac{\ell_n}{2})- X(\frac{\pi}{2},y_n, \frac{M}{2})|  < \eps.
\]
On the other hand, since $A:=\| \na B\|_{L^\infty(\Rr^2)} <\infty$, using the exponential contraction estimate for ODEs, we find 
\[
|X(\frac{\pi}{2}, 0, \frac{M}{2}) - X(\frac{\pi}{2}, y_n, \frac{M}{2})| \le |(\frac{\pi}{2}, 0) - (\frac{\pi}{2}, y_n)|\, e^{\frac{AM}{2}} = y_n \, e^{\frac{AM}{2}}.
\]
Therefore, by the triangle inequality,
\[
|(\frac{\pi}{2}, \pi - y_n) - X(\frac{\pi}{2}, 0, \frac{M}{2})| < \eps + y_n e^{\frac{AM}{2}}.
\]
However, since the vector field \(B\) is horizontal on the line \(y = 0\), the point \(X(\frac{\pi}{2}, 0, \frac{M}{2})\) lies on the same line. Consequently, 
\[
|\pi - y_n| \le |(\frac{\pi}{2}, \pi - y_n) - X(\frac{\pi}{2}, 0, \frac{M}{2})| < \eps + y_n e^{\frac{AM}{2}}.
\]
This contradicts the fact that \(\{y_n\}\) converges to \(0\).

On the other hand,  for any $0 < c_1<c_2<1$, if we restrict $B$ from $U$ to the region $\Omega$  between the level curves $\psi^{-1}(c_1)$ and \(\psi^{-1}(c_2)\), then the lengths of the periods are uniformly bounded, so $B\in \cP_0$. Indeed, we first note that since $\overline{\Omega}$ is compact and foliated by compact orbits, the lengths of the orbits are bounded, say by $L<\infty$; see \cite{EMS}.  In $\Omega$, $|B|$ is a positive lower bound $m$.  For any \(z \in \Omega\), if  \(I\) denotes one period of its orbit, then 
\[
m\, |I| \le  \int_{I} |B\bigl(X(z ,s)\bigr)|=\int_I |\p_s X(z, s)|ds \le L.
\]
Thus  $|I|\le \frac{L}{m}$. 
\end{exam}
\subsection*{Acknowledgements} The authors were partially supported by NSF grant DMS-2205710.

\bibliographystyle{amsplain}
\bibliography{ChannelMRE}

@article {Arn66,
    AUTHOR = {Arnold, V.},
     TITLE = {Sur la g\'eom\'etrie diff\'erentielle des groupes de {L}ie de
              dimension infinie et ses applications \`a{} l'hydrodynamique
              des fluides parfaits},
   JOURNAL = {Ann. Inst. Fourier (Grenoble)},
  FJOURNAL = {Universit\'e{} de Grenoble. Annales de l'Institut Fourier},
    VOLUME = {16},
      YEAR = {1966},
     PAGES = {319--361},
      ISSN = {0373-0956,1777-5310},
   MRCLASS = {57.50 (57.55)},
  MRNUMBER = {202082},
MRREVIEWER = {R.\ Hermann},
       URL = {http://www.numdam.org/item?id=AIF_1966__16_1_319_0},
}

@book {ArnoldKhesin,
    AUTHOR = {Arnold, Vladimir I. and Khesin, Boris A.},
     TITLE = {Topological methods in hydrodynamics},
    SERIES = {Applied Mathematical Sciences},
    VOLUME = {125},
   EDITION = {Second},
 PUBLISHER = {Springer, Cham},
      YEAR = {[2021] \copyright 2021},
     PAGES = {xx+455},
      ISBN = {978-3-030-74277-5; 978-3-030-74278-2},
   MRCLASS = {58-02 (35Q30 57Z05 58B25 58D05 76-02 76M30)},
  MRNUMBER = {4268535},
       DOI = {10.1007/978-3-030-74278-2},
       URL = {https://doi.org/10.1007/978-3-030-74278-2},
}

@article {Elgindi17,
    AUTHOR = {Elgindi, Tarek M.},
     TITLE = {On the asymptotic stability of stationary solutions of the
              inviscid incompressible porous medium equation},
   JOURNAL = {Arch. Ration. Mech. Anal.},
  FJOURNAL = {Archive for Rational Mechanics and Analysis},
    VOLUME = {225},
      YEAR = {2017},
    NUMBER = {2},
     PAGES = {573--599},
      ISSN = {0003-9527,1432-0673},
   MRCLASS = {76S05 (35B35 35Q35)},
  MRNUMBER = {3665666},
MRREVIEWER = {Roberta\ De Luca},
       DOI = {10.1007/s00205-017-1090-7},
       URL = {https://doi.org/10.1007/s00205-017-1090-7},
}

@article {BFV22,
    AUTHOR = {Beekie, Rajendra and Friedlander, Susan and Vicol, Vlad},
     TITLE = {On {M}offatt's magnetic relaxation equations},
   JOURNAL = {Comm. Math. Phys.},
  FJOURNAL = {Communications in Mathematical Physics},
    VOLUME = {390},
      YEAR = {2022},
    NUMBER = {3},
     PAGES = {1311--1339},
      ISSN = {0010-3616,1432-0916},
   MRCLASS = {76W05},
  MRNUMBER = {4389083},
MRREVIEWER = {Anthony\ Suen},
       DOI = {10.1007/s00220-021-04289-3},
       URL = {https://doi.org/10.1007/s00220-021-04289-3},
}

@article {Brenier14,
    AUTHOR = {Brenier, Yann},
     TITLE = {Topology-preserving diffusion of divergence-free vector fields
              and magnetic relaxation},
   JOURNAL = {Comm. Math. Phys.},
  FJOURNAL = {Communications in Mathematical Physics},
    VOLUME = {330},
      YEAR = {2014},
    NUMBER = {2},
     PAGES = {757--770},
      ISSN = {0010-3616,1432-0916},
   MRCLASS = {35Q35 (76R50)},
  MRNUMBER = {3223486},
MRREVIEWER = {Francesca\ Brini},
       DOI = {10.1007/s00220-014-1967-3},
       URL = {https://doi.org/10.1007/s00220-014-1967-3},
}

@article {Moffatt85,
    AUTHOR = {Moffatt, H. K.},
     TITLE = {Magnetostatic equilibria and analogous {E}uler flows of
              arbitrarily complex topology. {I}. {F}undamentals},
   JOURNAL = {J. Fluid Mech.},
  FJOURNAL = {Journal of Fluid Mechanics},
    VOLUME = {159},
      YEAR = {1985},
     PAGES = {359--378},
      ISSN = {0022-1120,1469-7645},
   MRCLASS = {76W05 (76C05)},
  MRNUMBER = {819398},
MRREVIEWER = {John\ Adam},
       DOI = {10.1017/S0022112085003251},
       URL = {https://doi.org/10.1017/S0022112085003251},
}

@article {Moffatt21,
    AUTHOR = {Moffatt, H. K.},
     TITLE = {Some topological aspects of fluid dynamics},
   JOURNAL = {J. Fluid Mech.},
  FJOURNAL = {Journal of Fluid Mechanics},
    VOLUME = {914},
      YEAR = {2021},
     PAGES = {Paper No. P1, 56},
      ISSN = {0022-1120,1469-7645},
   MRCLASS = {76A02 (37N10)},
  MRNUMBER = {4232242},
       DOI = {10.1017/jfm.2020.230},
       URL = {https://doi.org/10.1017/jfm.2020.230},
}

@inbook{Majda-Bertozzi, place={Cambridge}, series={Cambridge Texts in Applied Mathematics}, title={Energy Methods for the Euler and the Navier–Stokes Equations}, booktitle={Vorticity and Incompressible Flow}, publisher={Cambridge University Press}, author={Majda, Andrew J. and Bertozzi, Andrea L.}, year={2001}, pages={86–135}, collection={Cambridge Texts in Applied Mathematics}}

@article {HuyToan,
    AUTHOR = {Nguyen, Huy Q. and Nguyen, Toan T.},
     TITLE = {On global stability of optimal rearrangement maps},
   JOURNAL = {Arch. Ration. Mech. Anal.},
  FJOURNAL = {Archive for Rational Mechanics and Analysis},
    VOLUME = {238},
      YEAR = {2020},
    NUMBER = {2},
     PAGES = {671--704},
      ISSN = {0003-9527,1432-0673},
   MRCLASS = {49Q22 (35Q35)},
  MRNUMBER = {4134149},
MRREVIEWER = {David\ Tewodrose},
       DOI = {10.1007/s00205-020-01552-0},
       URL = {https://doi.org/10.1007/s00205-020-01552-0},
}

@Inbook{Nirenberg,
author="Nirenberg, L.",
editor="Faedo, S.",
title="On Elliptic Partial Differential Equations",
bookTitle="Il principio di minimo e sue applicazioni alle equazioni funzionali",
year="2011",
publisher="Springer Berlin Heidelberg",
address="Berlin, Heidelberg",
pages="1--48",
isbn="978-3-642-10926-3",
doi="10.1007/978-3-642-10926-3_1",
url="https://doi.org/10.1007/978-3-642-10926-3_1"
}

@article{Tan,
 author  = "Tan, J.",
  title   = "Weak solutions of Moffatt's magnetic relaxation equations",
  journal = "arXiv:2311.18407",
  year    = "2023"
  }

@article {BKS,
    AUTHOR = {Bae, Hantaek and Kwon, Hyunwoo and Shin, Jaeyong},
     TITLE = {Local and global solutions to {S}tokes-magneto equations with
              fractional dissipations},
   JOURNAL = {Partial Differ. Equ. Appl.},
  FJOURNAL = {Partial Differential Equations and Applications},
    VOLUME = {6},
      YEAR = {2025},
    NUMBER = {4},
     PAGES = {Paper No. 32, 23},
      ISSN = {2662-2963,2662-2971},
   MRCLASS = {35Q35 (35A01 35A02 35D35 76W05)},
  MRNUMBER = {4931557},
       DOI = {10.1007/s42985-025-00341-2},
       URL = {https://doi.org/10.1007/s42985-025-00341-2},
}

@article {EMS,
    AUTHOR = {Edwards, Robert and Millett, Kenneth and Sullivan, Dennis},
     TITLE = {Foliations with all leaves compact},
   JOURNAL = {Topology},
  FJOURNAL = {Topology. An International Journal of Mathematics},
    VOLUME = {16},
      YEAR = {1977},
    NUMBER = {1},
     PAGES = {13--32},
      ISSN = {0040-9383},
   MRCLASS = {57D30 (58A25)},
  MRNUMBER = {438353},
MRREVIEWER = {C.\ S.\ Hartzman},
       DOI = {10.1016/0040-9383(77)90028-3},
       URL = {https://doi.org/10.1016/0040-9383(77)90028-3},
}

\end{document}